\documentclass[fleqn,11pt]{elsarticle}
\usepackage{graphicx} 
\usepackage[utf8]{inputenc}
\usepackage{natbib}
\usepackage{graphicx}
\usepackage{mathdots}
\usepackage{hyperref}
\usepackage{float}
\usepackage{amsmath,amsthm,amssymb,mathrsfs}
\usepackage{geometry} 
\geometry{margin=1in} 
\usepackage{centernot} 
\usepackage{stmaryrd}
\usepackage{graphicx}
\usepackage{tikz} 
\usetikzlibrary{shapes,decorations}
\usepackage{caption} 
\usepackage{subcaption} 
\usepackage{mdframed} 
\usepackage{upquote}
\usepackage{multirow}
\usepackage{booktabs}
\usepackage{xspace}
\usepackage{hyperref}
\usepackage{bm}
\usepackage[normalem]{ulem} 

\graphicspath{{FIGS/}}

\biboptions{sort&compress} 

\newtheorem{lem}{Lemma}[section]
\newtheorem{alg}{Algorithm}[section]
\newtheorem{thm}{Theorem}[section]
\newtheorem{rmk}{Remark}[section]

\newcommand{\bena}{\begin{eqnarray}\begin{array}{l}}
\newcommand{\eena}{\end{array}\end{eqnarray}}
\newcommand{\benas}{\begin{eqnarray*}\begin{array}{l}}
\newcommand{\eenas}{\end{array}\end{eqnarray*}}
\newcommand{\benall}{\begin{eqnarray}\begin{array}{ll}}
\newcommand{\eenall}{\end{array}\end{eqnarray}}
\newcommand{\besn}{\begin{subnumcases}}
\newcommand{\eesn}{\end{subnumcases}}
\newcommand{\ben}{\begin{eqnarray}}
\newcommand{\een}{\end{eqnarray}}
\newcommand{\bea}{\begin{array}}
\newcommand{\eea}{\end{array}}
\newcommand{\bes}{\begin{subequations}}
\newcommand{\ees}{\end{subequations}}
\newcommand{\bec}{\begin{cases}}
\newcommand{\eec}{\end{cases}}
\newcommand{\bef}{\begin{figure}[H]\centering}
\newcommand{\eef}{\end{figure}}
\newcommand{\bet}{\begin{tikzpicture}}
\newcommand{\eet}{\end{tikzpicture}}
\newcommand{\beq}{\begin{equation}}
\newcommand{\eeq}{\end{equation}}
\newcommand{\bep}{\begin{proof}}
\newcommand{\eep}{\end{proof}}
\newcommand{\bei}{\begin{itemize}}
\newcommand{\eei}{\end{itemize}}
\newcommand{\beu}{\begin{enumerate}}
\newcommand{\eeu}{\end{enumerate}}
\def\beg#1\eeg{\begin{align}#1\end{align}}
\def\begd#1\eegd{\bena \begin{aligned}#1\end{aligned}\eena}
\def\besp#1\eesp{\begin{split}#1\end{split}}
\def\besl#1\eesl{\begin{subequations}\begin{align}#1\end{align}\end{subequations}} 

\def\inc(#1){\includegraphics[width=0.5\linewidth]{#1}}


\def\bG{\ensuremath{{\bf G}}}

\def\bx{\ensuremath{{\bf x}}}

\begin{document}
\begin{frontmatter}
	\title{Energy-Stable Swarm-Based Inertial Algorithms for Optimization}
	\author{Xuelong Gu$^{a}$}
	\author{Qi Wang$^{a,*}$}

	\address[1]{Department of Mathematics, University of South Carolina, Columbia, SC, 29208, USA\\ \vspace{-1cm}}

	\begin{abstract}
		We formulate the swarming optimization problem as a weakly coupled, dissipative dynamical system governed by a controlled energy dissipation rate and initial velocities that adhere to the nonequilibrium Onsager principle. In this framework, agents' inertia, positions, and masses are dynamically coupled. To numerically solve the system, we develop a class of efficient, energy-stable algorithms that either preserve or enhance energy dissipation at the discrete level. At equilibrium, the system tends to converge toward one of the lowest local minima explored by the agents, thereby improving the likelihood of identifying the global minimum. Numerical experiments confirm the effectiveness of the proposed approach, demonstrating significant advantages over traditional swarm-based gradient descent methods, especially when operating with a limited number of agents.
	\end{abstract}

	\begin{keyword}
		Global optimization;
		Inertial method;
		Swarming;
		Energy-stable scheme;
	\end{keyword}

\end{frontmatter}

\begin{figure}[b]
	\small \baselineskip=10pt
	\rule[2mm]{1.8cm}{0.2mm} \par
	$^{*}$Corresponding author.\\
	E-mail address: QWANG@math.sc.edu (Q. Wang).
\end{figure}

\section{Introduction}

The global optimization over non-convex landscapes associated with non-convex objective functions continues to be a critical and challenging problem in computational science and engineering, with significant implications for disciplines ranging from materials science to artificial intelligence and machine learning \cite{NocedalWright2006, Sra2011, Wales2004}. Traditional optimization methods, particularly those based on gradient descent, have been widely employed in practice, especially in machine learning, due to their simplicity and well-understood convergence properties. However, these methods are inherently local and often trapped in suboptimal minima when confronted with complex, nonconvex objective functions. Over the past few decades, numerous intelligent optimization methods, such as particle swarm optimization \cite{Grassi2023,Kennedy1995}, ant colony optimization \cite{Dorigo1996,Yang2010}, consensus-based methods \cite{Carrillo2018,Pinnau2017}, and others \cite{Borghi2023,Chen2020,vanLaarhoven1987}, have been developed to address these limitations. Despite their success in promoting global exploration of objective functions' landscape, these methods often face difficulties in balancing the trade-off between rapid local convergence and extensive global search, particularly when the underlying problem exhibits intricate landscapes in the objective functions.

Recent developments in swarm-based gradient descent (SBGD) methods \cite{Lu2024SwarmGD, SwarmGDRandom, SwarmGDAnnel} have sought to ameliorate these challenges by endowing individual agents not merely with positional data but also with a dynamic weight or ``mass'' that encapsulates their relative significance within the swarm. Within these paradigms, agents communicate and adaptively modulate their step sizes: those bearing greater mass, deemed ``heavier'', typically adopt reduced time steps, thereby converging more swiftly to proximate local minimizers, whereas ``lighter'' agents maintain "sufficient momentum" to traverse more expansive regions of the search space. Nonetheless, SBGD approaches continue to encounter limitations, particularly regarding their capacity to finely modulate agent inertia and dynamically redistribute mass in a manner that consistently augments global search efficacy without compromising convergence and stability.

The inertial algorithm for global optimization leverages momentum-based dynamics to enhance the efficiency of optimization processes, particularly in high-dimensional and non-convex landscapes. Unlike traditional gradient-based methods that may stagnate in local minima, inertial approaches incorporate acceleration terms that help escape shallow traps and facilitate convergence toward the global minimum. These algorithms are often inspired by physical principles, such as nonequilibrium thermodynamical principles (i.e., the Onsager principle \cite{Wang-2020}) for dissipative dynamical systems, where an agent's motion is governed by inertia, damping, and external forces derived from an objective function \cite{Polyak1964}. One prominent example is the heavy-ball method, which introduces a velocity-dependent term to smooth the optimization trajectory and prevent oscillations \cite{Polyak1964}. More advanced formulations, including Nesterov’s accelerated gradient method \cite{Nesterov1983} and second-order inertial systems \cite{Attouch2000}, strategically adjust the dissipation rate to balance exploration and convergence. These methods have been further extended in various applications, such as machine learning \cite{Sutskever2013}, physics-based optimization \cite{Wibisono2016}, and engineering design \cite{Deb2012}. By leveraging inertia, these algorithms achieve faster convergence rates and improved robustness, making them particularly effective for global optimization problems in diverse scientific and engineering domains.

Motivated by these challenges and advances, we present a novel swarm-based inertial (SBI) algorithm that seamlessly integrates agent communication with the foundational principles of nonequilibrium thermodynamics. Based on the generalized Onsager principle \cite{Wang-2020}, we reformulate the global optimization problem as a weakly coupled dissipative system among the dynamics of agents, where each agent is endowed with a total mechanical energy comprising contributions from both the kinetic and potential energy, specifically defined by
\begin{equation}\label{eq:intro-energy}
	E_i(\mathbf{x}) = \frac{m_i(\mathbf{x}) + \epsilon}{2}  \|\dot{\mathbf{x}}_i\|^2 + w_i F(\mathbf{x}_i), \quad i = 1, \cdots, N,
\end{equation}
where $F$ denotes the objective function perceived as the potential energy here, $\epsilon > 0$ is sufficiently small to ensure a positive lower bound on the agent's mass $m_i(\mathbf{x})$, and $w_i$ are weighting factors to balance kinetic energy and potential contributions. We denote  $\mathbf{v}_i = \dot{\mathbf{x}}_i$. By differentiating the aforementioned energy expression with respect to time and imposing an energy dissipation rate through a friction operator/parameter $R>0$, we obtain the following dynamical system for mass dynamics:
\begin{equation}\label{eq:intro-dynamic}
	\left\lbrace
	\begin{aligned}
		 & \dot{\mathbf{x}}_i = \mathbf{v}_i,                                                                                                              \\
		 & \dot{\mathbf{v}}_i = -\left(R + \tfrac{\dot{m}_i}{2 (m_i + \epsilon)}\right) \mathbf{v}_i - \tfrac{w_i}{m_i + \epsilon} \nabla F(\mathbf{x}_i).
	\end{aligned}
	\right.
\end{equation}
This reformulation ensures that ``heavier'' agents, characterized by larger effective mass, experience enhanced damping and thus tend to rapidly converge to local minima, whereas ``lighter'' agents maintain sufficient momentum to escape shallow basins and explore wider regions in the search space. To complete the formulation, we augment the system with equations governing mass dynamics to specify the inter-agent communication. Lagrange multipliers are employed to enforce mass conservation, thereby enabling the selective reallocation of mass from underperforming agents to those demonstrating promising descent trajectories. This “survival-of-the-fittest” paradigm is pivotal in establishing a robust equilibrium between local exploitation and global exploration.

A notable characteristic of system \eqref{eq:intro-energy} is its intrinsic capacity to dissipate total energy \eqref{eq:intro-dynamic} and reduce the mass of the less optimal agents in time. Consequently, it is natural to construct numerical algorithms that preserve this property at the discrete level. These algorithms are commonly referred to as energy-stable schemes. Recent advances in numerical analysis have made substantial progress in the development of such algorithms for dissipative systems; see Refs. \cite{Zhao&W2016,Gong-2018,Gong-2021,Hong-2023,Shen2019,Eyre1998,Du2019,Du2021,ieq2,ieq3}. In this study, we introduce two energy-stable schemes \cite{Eyre1998,Zhao&W2016}. The first employs an explicit–implicit discretization to construct a numerical scheme for \eqref{eq:intro-energy}, which has been rigorously demonstrated to preserve both mass bounds and energy stability under a specified constraint on the time step. Subsequently, utilizing stabilization techniques, we propose an unconditionally energy-stable scheme that preserves the energy stability of system \eqref{eq:intro-energy} and mass bounds irrespective of the time step size.

For any objective function in a minimization problem, we formulate the problem into a minimization problem for the total energy. Then, we implement the two energy-stable algorithms to search for the equilibrium of the weakly decoupled dynamical system, in hoping that it will yield a minimum close to the global minimum of the original objective function. We then compare the numerical results with those obtained using the swarm-based gradient descent method and its invariants to showcase the superior performance of the new algorithms in most cases, especially when the number of agents is small.

The remainder of this paper is organized as follows. In \S 2, we briefly review the SBGD method. In \S 3, we detail the formulation of the SBI system, and rigorously prove that the linearly stable state of the proposed dynamical system corresponds to a minimum of the objective function.  In \S 4 we develop a couple of energy-stable schemes for the SBI system, including the implicit-explicit SBI (SBI-IMEX) algorithm and the stabilized implicit-explicit SBI (SBI-SIMEX) algorithm. We show rigorously that the SBI-IMEX scheme is conditionally energy-stable and that the SBI-SIMEX algorithm is unconditionally energy-stable. Finally, we  provide extensive numerical experiments to benchmark the performance of the proposed SBI-SIMEX algorithm against SBGD methods in \S5. \S6 summarizes our results.

\section{Swarm-Based gradient descent method}

We succinctly review the swarm-based gradient descent (SBGD) method introduced in \cite{Lu2024SwarmGD}. We consider the following optimization problem
\begin{equation}
	\min_{x\in\Omega} F(\mathbf{x}), \label{eq:object-abstract}
\end{equation}
where $\Omega \subset \mathbb{R}^d$ and $F(\bullet): \mathbb{R}^d \to \mathbb{R}$ is a differentiable and likely nonconvex objective function. The classical gradient decent (GD) method for solving \eqref{eq:object-abstract} is given by
\begin{equation}
	\mathbf{x}^{n+1} = \mathbf{x}^n - h \nabla F(\mathbf{x}^n),
\end{equation}
where $h$ denotes the step size. It is well known that the classical GD protocol often becomes ensnared within the basins of attraction of local minima, thereby limiting its effectiveness for global optimization.

To alleviate this limitation, Lu et al. proposed the SBGD method in \cite{Lu2024SwarmGD}. The main idea is to expand the problem into a multi-agent optimization problem by initializing $N$ agents $\mathbf{x}_i \in \mathbb{R}^d, \ i = 1, \cdots, N$, each endowed with an associated mass $m_i$, such that $\sum_{i=1}^N m_i = 1$. At each time step, the mass of each agent is redistributed: agents corresponding to larger function values relinquish mass, thereby enabling the agent with the minimal function value to accrue additional mass. This dynamic redistribution is governed by the following dynamic equations of $m_i(t)$ and mass conservation:
\begin{equation}\label{eq:mass-SBGD}
	\left\lbrace
	\begin{aligned}
		\frac{d}{dt} m_i(t) & = -\phi_p(\eta_i(t)) m_i(t),            &  & i \neq i(t)        ,                          \\
		m_i(t)              & = 1 - \sum\limits_{j \neq i(t)} m_j(t), &  & i = i(t) = {\rm argmin}_i F(\mathbf{x}_i(t)).
	\end{aligned}
	\right.
\end{equation}
Here,
\ben
\left\{
\bea{l}
\phi_p(x) = x^p, \quad \eta_i(t) = \frac{F(\mathbf{x}_i) - F(\mathbf{x}_{i_-})}{F(\mathbf{x}_{i_+}) - F(\mathbf{x}_{i_-})},\\
\\
i_- (t) = {\rm argmin}_{1 \leq i \leq N} F(\mathbf{x}_i(t)), \quad i_+(t) = {\rm argmax}_{1 \leq i \leq N} F(\mathbf{x}_i(t)).
\eea\right.
\een
Once $m_i^{n+1} (i=1, \cdots, N)$ are computed via an appropriate numerical scheme from \eqref{eq:mass-SBGD}, the following gradient descent step is employed to update the positions of the agents.
\begin{equation}\label{eq:dg-SBGD}
	\mathbf{x}_i^{n+1} = \mathbf{x}_i^n - h(\mathbf{x}_i^n, \lambda \psi_q (\widetilde{m}_i^{n+1})) \nabla F(\mathbf{x}_i^n), \quad \widetilde{m}_i^{n+1} = \frac{m_i^{n+1}}{\max\limits_i m_i^{n+1}}.
\end{equation}
Here, $\psi_q (x) = x^q$, and the step size, $h$, is selected by a backtracking algorithm that it is as large as possible while satisfying
\begin{equation}\label{eq:dg-step-choice}
	F(\mathbf{x}_i^n - h\nabla F(\mathbf{x}_i^n)) \leq F(\mathbf{x}_i^n) - \lambda \psi_q (\widetilde{m}_i^{n+1}) h |\nabla F(\mathbf{x}_i^n)|^2.
\end{equation}
Criterion \eqref{eq:dg-step-choice} is essential for the SBGD method. First, it ensures that each agent is assigned a time step that yields a minimum decrease in the objective function at every iteration. Second, it facilitates communication among agents. Specifically, agents with lower mass receive larger time steps, enabling them to escape local minima and explore broader regions in searching for better solutions. In contrast, agents with higher mass are given smaller time steps to promote rapid convergence toward a local minimum. For any given pair $(p, q)$, the SBGD method based on \eqref{eq:mass-SBGD} and \eqref{eq:dg-SBGD} is referred to as the ${\rm SBGD}_{pq}$ method.

\section{Swarm-based inertial method}
\subsection{Inertial dynamics}
We illustrate our novel swarm-based inertial (SBI) method from the perspective of nonequilibrium thermodynamics. Consider a system of $N$ agents with positions $\mathbf{x}_i(t)$ for $i = 1, \cdots, N$, each of which is endowed with mass $m_i(\mathbf{x}(t))$ and evolves in the terrain shaped by the potential (objective) function $F(\bx_i(t))$. The velocity of each agent is denoted as $\dot{\mathbf{x}}_i(t) = \frac{d}{dt} \mathbf{x}_i(t)$. Our goal is to minimize the following total mechanical energy for each agent
\begin{equation*}
	\min\limits_{\mathbf{x}_i \in \mathbb{R}^d} E_i(\mathbf{x}) = \frac{m_i(\mathbf{x}) + \epsilon}{2}  \|\dot{\mathbf{x}}_i\|^2 + w_i F(\mathbf{x}_i), \quad i = 1, \cdots, N.
\end{equation*}
Here, $\mathbf{x} = (\mathbf{x}_1, \cdots, \mathbf{x}_N)$ represents the collective state of the agents, $w_i (i=1, \cdots, N)$ are weighting factors employed to balance the inertia and potential contributions to the total energy of each agent, and $\epsilon>0$ is a user-defined parameter to safeguard the lower bound of mass, chosen to be sufficiently small here. The introduction of inertia provides an additional mechanism to facilitate oscillatory movement of the agents.

Our primary objective is then to derive a dynamic system in which the velocity of ``heavier'' agents decreases rapidly, ensuring that the agents converge toward the local minimum within the attractive region. Conversely, the velocity of ``lighter'' agents undergoes a more gradual deceleration, allowing them to maintain sufficient inertia to escape local minima and explore more regions.

The time derivative of each agent's mechanical energy is computed as follows:
\begin{equation*}
	\begin{aligned}
		\frac{d E_i(\mathbf{x})}{dt} & = \frac{1}{2} \dot{m}_i \|\dot{\mathbf{x}}_i\|^2 + (m_i + \epsilon) (\ddot{\mathbf{x}}_i, \dot{\mathbf{x}}_i) + w_i (\nabla F(\mathbf{x}_i), \dot{\mathbf{x}}_i) \\
		                             & = \left( (m_i + \epsilon) \ddot{\mathbf{x}}_i + \frac{1}{2} \dot{m}_i \dot{\mathbf{x}}_i + w_i \nabla F(\mathbf{x}_i), \dot{\mathbf{x}}_i \right).
	\end{aligned}
\end{equation*}
Invoking the generalized Onsager principle \cite{Wang-2020}, we introduce a friction parameter $R > 0$ and define the following relationship between generalized force and flux
\begin{equation}\label{eq:dynamic-xv-second-order}
	(m_i + \epsilon) \ddot{\mathbf{x}}_i + \frac{1}{2} \dot{m}_i \dot{\mathbf{x}}_i + w_i \nabla F(\mathbf{x}_i) = - R(m_i + \epsilon) \dot{\mathbf{x}}_i.
\end{equation}
The energy dissipation rate is rewritten into
\ben
\frac{dE_i}{dt}=-(R(m_1+\epsilon) \dot{\bx},\dot{\bx}).
\een

Introducing an intermediate variable $\mathbf{v}_i = \dot{\mathbf{x}}_i$, we recast \eqref{eq:dynamic-xv-second-order} into the following first-order system
\begin{equation}\label{eq:dynamic-xv-first-order}
	\left\lbrace
	\begin{aligned}
		 & \dot{\mathbf{x}}_i = \mathbf{v}_i,                                                                                                    \\
		 & (m_i+\epsilon) \dot{\mathbf{v}}_i = -\left(R(m_i + \epsilon) + \frac{1}{2}\dot{m}_i\right) \mathbf{v}_i - w_i \nabla F(\mathbf{x}_i).
	\end{aligned}
	\right.
\end{equation}
To complete system \eqref{eq:dynamic-xv-first-order}, it is necessary to introduce the dynamics of $\mathbf{m} = (m_1, \cdots, m_N)^\top$.   To accomplish this, we adopt the dynamical equation of \eqref{eq:mass-SBGD} for all variables and enforce mass conservation through a Lagrange multiplier $\lambda$,
\begin{equation}\label{eq:mass-lag01}
	\left\lbrace
	\begin{aligned}
		 & \frac{d}{dt} m_i(t) = -\phi_p (\eta_i(t))m_i(t) - \lambda(t) \alpha_i, \ \alpha_i \geq 0, \ \sum_{i=1}^N \alpha_i=1, \\
		 & \sum\limits_{i=1}^N m_i = 1,
	\end{aligned}
	\right.
\end{equation}
where $\alpha_i, i=1,\cdots, N$ are prescribed weights for the mass dynamical equations.

It is easy to see that if the initial total mass satisfies the consistency condition $\sum_{i=1}^N m_i = 1$, then the second equation in \eqref{eq:mass-lag01} becomes equivalent to
\begin{equation}\label{eq:mass-lag02}
	\frac{d}{dt} \sum\limits_{i=1}^N m_i = 0.
\end{equation}
The Lagrange multiplier \(\lambda\) can therefore be determined explicitly by summing the first equation of \eqref{eq:mass-lag01} over $i=1,\ldots,N$ and combining the resulting expression with \eqref{eq:mass-lag02}:
\ben
\lambda(t)=\sum_{j=1}^N \phi_p(\eta_j(t))m_j(t)
\een
Consequently, the dynamics of the mass can be expressed as follows:
\begin{equation}
	\dot{m}_i = - \phi_p(\eta_i(t))m_i(t) + \alpha_i \sum\limits_{j=1}^{N} \phi_p(\eta_j(t))m_j(t).
\end{equation}
We summarize the final governing system as follows:
\begin{equation}\label{eq:dynamic-final}
	\left\lbrace
	\begin{aligned}
		\dot{\mathbf{x}}_i & = \mathbf{v}_i,                                                                                                           \\
		\dot{\mathbf{v}}_i & = -\left(R  + \tfrac{\dot{m}_i}{2(m_i + \epsilon)} \right)\mathbf{v}_i - \frac{w_i}{m_i+\epsilon} \nabla F(\mathbf{x}_i), \\
		\dot{m}_i          & = -\phi_p(\eta_i(t)) m_i(t) + \alpha_i \sum\limits_{j=1}^N  \phi_p(\eta_j(t))m_j(t),                                      \\
	\end{aligned}
	\right.
\end{equation}
where
\begin{equation}
	\phi_p(\eta) = \eta^p, \ \eta_i(t) = \frac{F(\mathbf{x}_i) - F(\mathbf{x}_{i_-}) + \epsilon}{F(\mathbf{x}_{i_+}) - F(\mathbf{x}_{i_-}) + \epsilon}.
\end{equation}
In this paper, we choose
\begin{equation*}
	\alpha_i(t) =
	\left\lbrace
	\begin{aligned}
		 & 1 &  & i = i_-(t)  \\
		 & 0 &  & \text{else}
	\end{aligned}
	\right.
\end{equation*}
In general, the friction operator, $R$, can be chosen to be distinct for each agent based on the need of users. In this case, these can be adjustable parameters of the model. For simplicity, we adopt a unified value in this study.

To be succinct, we rewrite \eqref{eq:dynamic-final} into the following compact form
\begin{equation}\label{eq:dynamic-final-compact}
	\left\lbrace
	\begin{aligned}
		\dot {\mathbf{x}} & = \mathbf{v},                                                                                                                                                                                                           \\
		\dot{\mathbf{v}}  & = - \mathbb{I}_d \otimes {\rm diag}\left(R  + \tfrac{1}{2}\dot{\mathbf{g}}_\epsilon(\mathbf{m})\right) \mathbf{v} - \mathbb{I}_d \otimes {\rm diag}\left(\mathbf{w}_\epsilon(\mathbf{m})\right) \mathbf{G}(\mathbf{x}), \\
		\dot{\mathbf{m}}  & = - \left(\mathbb{I}_N -  \bm{\alpha} \mathbf{1}^\top\right) {\rm diag }(\Phi_p ) \mathbf{m},
	\end{aligned}
	\right.
\end{equation}
where $\bm{\alpha}=(\alpha_1, \cdots,\alpha_N)^\top, \ (\alpha)_i=\alpha_i,  \ (\mathbf{g}_{\epsilon}(\mathbf{m}))_i=\ln(m_i+\epsilon), \ (\mathbf{w}_\epsilon)_i=\frac{w_i}{m_i+\epsilon}$, and $(\bG(\mathbf{x}))_i=\nabla F(\mathbf{x}_i)$, $(\Phi_p)_i = \phi_p(\eta_i(t))$.

We will demonstrate that the equilibrium states of \eqref{eq:dynamic-final-compact} correspond to local minima of $F(\bx)$.

\begin{lem}\label{lem:stable-state}
	Let $\mathbb{B} \in \mathbb{R}^{n \times n}$ be a symmetric and negative definite matrix, $\mathbb{A} \in \mathbb{R}^{n \times n}$ be a symmetric matrix, $\mathbb{A}\mathbb{B} = \mathbb{B} \mathbb{A}$. Then, matrix $\mathbb{S} \in \mathbb{R}^{2n\times 2n}$ given by
	\begin{equation*}
		\mathbb{S} = \begin{pmatrix}
			\mathbb{O} & \mathbb{I} \\ \mathbb{A} & \mathbb{B}
		\end{pmatrix}
	\end{equation*}
	possesses eigenvalues with strictly negative real parts if and only if $\mathbb{A}$ is negative definite.
\end{lem}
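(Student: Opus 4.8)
The plan is to exploit the two structural hypotheses—that $\mathbb{A}$ and $\mathbb{B}$ are symmetric and that they commute—to simultaneously diagonalize them, thereby reducing the $2n \times 2n$ spectral problem for $\mathbb{S}$ to $n$ decoupled $2 \times 2$ problems whose stability is elementary. First I would invoke the spectral theorem for commuting symmetric matrices: since $\mathbb{A}\mathbb{B}=\mathbb{B}\mathbb{A}$ and both are symmetric, there is an orthogonal $\mathbb{Q}$ with $\mathbb{Q}^\top \mathbb{A}\mathbb{Q} = \mathrm{diag}(a_1,\dots,a_n)$ and $\mathbb{Q}^\top \mathbb{B}\mathbb{Q} = \mathrm{diag}(b_1,\dots,b_n)$ simultaneously. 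Negative definiteness of $\mathbb{B}$ gives $b_i<0$ for every $i$, while $\mathbb{A}$ negative definite is equivalent to $a_i<0$ for every $i$.

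Next I would conjugate $\mathbb{S}$ by the block-diagonal orthogonal matrix $\mathrm{diag}(\mathbb{Q},\mathbb{Q})$, a similarity transformation that preserves the spectrum and carries $\mathbb{S}$ into the matrix with blocks $\mathbb{O}$, $\mathbb{I}$, $\mathrm{diag}(a_i)$, and $\mathrm{diag}(b_i)$. A symmetric permutation of the $2n$ indices that interleaves the ``position'' and ``velocity'' copies then block-diagonalizes this into $n$ independent blocks $\begin{pmatrix} 0 & 1 \\ a_i & b_i \end{pmatrix}$. Consequently the spectrum of $\mathbb{S}$ is exactly the union of the spectra of these $2\times 2$ blocks.

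Finally, the $i$th block has characteristic polynomial $\mu^2 - b_i\mu - a_i$, and I would apply the Routh--Hurwitz criterion for quadratics: both roots of $\mu^2 + p\mu + q$ lie in the open left half-plane if and only if $p>0$ and $q>0$. Here $p=-b_i>0$ holds automatically from $\mathbb{B}\prec 0$, so the left-half-plane condition for the $i$th block collapses to $q=-a_i>0$, i.e. $a_i<0$. Requiring this for all $i$ shows that every eigenvalue of $\mathbb{S}$ has strictly negative real part precisely when every $a_i<0$, which is negative definiteness of $\mathbb{A}$. Since the argument is an equivalence at the level of each block, both directions of the ``if and only if'' are established simultaneously.

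The main obstacle—indeed the only delicate point—is the reduction step: justifying that a \emph{single} orthogonal $\mathbb{Q}$ diagonalizes $\mathbb{A}$ and $\mathbb{B}$ at once (which requires the commutation hypothesis, not just symmetry), and verifying that the subsequent permutation acts identically on the two $n$-dimensional copies so that it genuinely decouples $\mathbb{S}$ into intact $2\times 2$ blocks rather than mixing them. Once this block reduction is carried out cleanly, the remaining stability analysis is a one-line application of the quadratic Routh--Hurwitz test.
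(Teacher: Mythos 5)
Your proposal is correct and follows essentially the same route as the paper: simultaneous diagonalization of the commuting symmetric pair $(\mathbb{A},\mathbb{B})$, a similarity transformation reducing $\mathbb{S}$ to decoupled $2\times 2$ blocks $\begin{pmatrix} 0 & 1 \\ a_i & b_i \end{pmatrix}$, and the elementary observation that such a block is Hurwitz iff $a_i<0$ given $b_i<0$. Your version is slightly more careful than the paper's (you use an orthogonal $\mathbb{Q}$ and make the interleaving permutation and the Routh--Hurwitz test explicit, where the paper just asserts a "straightforward calculation"), but the argument is the same.
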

\begin{proof}
	Since $\mathbb{A} \mathbb{B} = \mathbb{B} \mathbb{A}$. There exists an invertible matrix $\mathbb{P}$ such that
	\begin{equation*}
		\mathbb{P}^{-1} \mathbb{A} \mathbb{P} = \mathbb{D}_A =  {\rm diag}\{\lambda^A_1, \cdots, \lambda^A_n \}, \ \mathbb{P}^{-1} \mathbb{B} \mathbb{P} = \mathbb{D}_B = {\rm diag} \{\lambda^B_1, \cdots, \lambda^B_n \}.
	\end{equation*}
	Consequently, we have
	\begin{equation*}
		\mathbb{S} = \begin{pmatrix} \mathbb{I} & \mathbb{O} \\ \mathbb{O} & \mathbb{P} \end{pmatrix}
		\begin{pmatrix} \mathbb{O} & \mathbb{I} \\ \mathbb{D}_A & \mathbb{D}_B \end{pmatrix}
		\begin{pmatrix} \mathbb{I} & \mathbb{O} \\ \mathbb{O} & \mathbb{P}^{-1} \end{pmatrix}
	\end{equation*}
	Therefore, we only need to consider the eigen-structure of the following subsystem
	\begin{equation*}
		\begin{pmatrix}
			0 & 1 \\ \lambda^A_i & \lambda^B_i
		\end{pmatrix}, \quad 1 \leq i \leq N.
	\end{equation*}
	It is easily confirmed by straightforward calculations that the above system possesses eigenvalues with negative real parts if and only if $\lambda^A_i < 0, \ 1 \leq i \leq N$, which implies that $\mathbb{A}$ is negative definite. The proof is thus completed.
\end{proof}

\begin{thm}
	Let $F(\mathbf{x}) \in C^2(\Omega)$, and denote $\mathbf{x}^\star = (\mathbf{x}_1^\star, \cdots, \mathbf{x}_N^\star)$, with $\mathbf{x}_i^\star \in \mathbb{R}^d$; analogous definitions apply to $\mathbf{v}^\star$, and $\mathbf{m}^\star \in \mathbb{R}^N$. We assume that the Hessian matrices $\nabla^2 F(\mathbf{x}_i^\star), \ 1 \leq i \leq N$ are non-degenerate and that $F(\mathbf{x}_i^\star) \neq F(\mathbf{x}_j^\star) \ \forall i \neq j$. Then, the triplet $(\mathbf{x}^\star, \mathbf{v}^\star, \mathbf{m}^\star)$ constitutes a linearly stable state of system \eqref{eq:dynamic-final} if and only if $\mathbf{x}_i^\star$ are distinct non-degenerate local minima of $F(\mathbf{x})$, with $\mathbf{v}_i = \mathbf{0}, \ \mathbf{m} = \mathbf{e}_{i_-^\star}$, where $i_-^\star = {\rm argmin}_{1\leq i \leq N} F(\mathbf{x}_i^\star)$, and $( \mathbf{e}_{i_-^\star} )_i = \delta_{i,i_-^\star}$, with $\delta_{i,j}$ representing the Kronecker delta.
\end{thm}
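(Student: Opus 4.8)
The plan is to proceed in two stages: first locate every equilibrium of \eqref{eq:dynamic-final}, and then linearize about such a point and show that the Jacobian's spectrum is governed by a block-triangular splitting into a mechanical part $(\mathbf{x},\mathbf{v})$ and a mass part $\mathbf{m}$. Throughout I will lean on the standing hypotheses as follows: the distinctness $F(\mathbf{x}_i^\star)\neq F(\mathbf{x}_j^\star)$ guarantees that the index $i_-(\mathbf{x})={\rm argmin}_i F(\mathbf{x}_i)$ is locally constant near $\mathbf{x}^\star$, so that $\bm{\alpha}$ is frozen and the vector field is $C^1$ in a neighborhood; non-degeneracy of the Hessians makes the critical points isolated and eliminates marginal cases.

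First I would determine the equilibria. Setting $\dot{\mathbf{x}}_i=0$ gives $\mathbf{v}_i^\star=0$; then $\dot{\mathbf{v}}_i=0$ collapses to $-\frac{w_i}{m_i^\star+\epsilon}\nabla F(\mathbf{x}_i^\star)=0$, forcing $\nabla F(\mathbf{x}_i^\star)=0$, i.e. each $\mathbf{x}_i^\star$ is a critical point. For the mass, since the regularization $\epsilon>0$ keeps $\eta_i^\star>0$ and hence $\phi_p(\eta_i^\star)>0$ for every $i$, the relations $\dot m_i=-\phi_p(\eta_i^\star)m_i^\star=0$ for $i\neq i_-^\star$ force $m_i^\star=0$; combined with conservation $\sum_i m_i=1$ this yields $\mathbf{m}^\star=\mathbf{e}_{i_-^\star}$. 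Thus every equilibrium already carries the structural form asserted in the theorem, and only the stability refinement (minimum versus saddle or maximum) remains to be settled.

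Next comes the linearization, whose crucial feature is that the coupling of the mechanical block to $\delta\mathbf{m}$ is quadratically small. In $\dot{\mathbf{v}}_i$ the damping term $\frac{\dot m_i}{2(m_i+\epsilon)}\mathbf{v}_i$ is a product of two quantities ($\dot m_i$ and $\mathbf{v}_i$) that both vanish at equilibrium, while the variation of $\frac{w_i}{m_i+\epsilon}$ multiplies $\nabla F(\mathbf{x}_i^\star)=0$; both drop out to first order. Hence the linearized equation keeps only $\delta\dot{\mathbf{v}}_i=-R\,\delta\mathbf{v}_i-\frac{w_i}{m_i^\star+\epsilon}\nabla^2 F(\mathbf{x}_i^\star)\,\delta\mathbf{x}_i$, independent of $\delta\mathbf{m}$ and of the other agents, so the Jacobian is block lower-triangular in the splitting $\big((\mathbf{x},\mathbf{v}),\mathbf{m}\big)$ and its spectrum is the union of the two diagonal blocks, the off-diagonal coupling $\partial\dot{\mathbf{m}}/\partial\mathbf{x}$ never entering. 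Each agent's mechanical block is then exactly of the form in Lemma \ref{lem:stable-state}, with $\mathbb{B}=-R\mathbb{I}$ symmetric negative definite and $\mathbb{A}=-\frac{w_i}{m_i^\star+\epsilon}\nabla^2 F(\mathbf{x}_i^\star)$ symmetric and commuting with $\mathbb{B}$; the lemma yields strictly negative real parts precisely when $\nabla^2 F(\mathbf{x}_i^\star)$ is positive definite, i.e. when $\mathbf{x}_i^\star$ is a non-degenerate local minimum. For the mass block I would compute $\partial\dot m_i/\partial m_l=-\phi_p(\eta_i^\star)\delta_{il}+\delta_{i,i_-^\star}\phi_p(\eta_l^\star)$, which after placing $i_-^\star$ last is lower-triangular with diagonal entries $-\phi_p(\eta_i^\star)<0$ for $i\neq i_-^\star$ and a single $0$, whose right eigenvector $\mathbf{e}_{i_-^\star}$ is transverse to the conserved hyperplane $\{\sum_i\delta m_i=0\}$ (left null vector $\mathbf{1}^\top$); restricted to that admissible subspace the mass dynamics is strictly contractive. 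Assembling the two blocks establishes both directions of the ``if and only if.''

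The main obstacle is to justify the decoupling and the treatment of the neutral mass direction rigorously: one must verify that the quadratic-order coupling terms genuinely vanish — using both $\mathbf{v}^\star=\mathbf{0}$ and $\nabla F(\mathbf{x}^\star)=\mathbf{0}$ — so that block-triangularity is exact at the linear level, and one must interpret ``linearly stable'' as stability on the mass-conservation manifold, discarding the $0$ eigenvalue that merely encodes the constraint $\sum_i m_i=1$ rather than a true instability.
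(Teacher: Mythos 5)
Your proposal is correct and follows essentially the same route as the paper: compute the Jacobian at the candidate equilibrium, observe that the couplings to $\delta\mathbf{m}$ vanish because $\mathbf{v}^\star=\mathbf{0}$ and $\nabla F(\mathbf{x}_i^\star)=\mathbf{0}$, apply Lemma \ref{lem:stable-state} to each agent's mechanical block with $\mathbb{B}=-R\mathbb{I}$ and $\mathbb{A}=-\tfrac{w_i}{m_i^\star+\epsilon}\nabla^2F(\mathbf{x}_i^\star)$, and discard the single zero eigenvalue of the mass block as an artifact of the constraint $\sum_i m_i=1$. The only (immaterial) differences are organizational: you classify all equilibria first rather than arguing the two implications separately, and you restrict to the conservation hyperplane via the invariant-subspace/transversality argument (left null vector $\mathbf{1}^\top$, right null vector $\mathbf{e}_{i_-^\star}$) where the paper uses an explicit chart $\psi:\mathbb{R}^{N-1}\to\mathcal{M}$ — both accomplish the same reduction.
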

\begin{proof}
	To begin, we compute the Jacobian matrix associated with system \eqref{eq:dynamic-final-compact} thereby obtaining
	\begin{equation*}
		\mathbb{J} = \frac{\partial(\dot{\mathbf{x}}_1, \cdots, \dot{\mathbf{x}}_N, \dot{\mathbf{v}}_1, \cdots, \dot{\mathbf{v}}_N, \dot{\mathbf{m}})}{\partial(\mathbf{x}_1, \cdots, \mathbf{x}_N, \mathbf{v}_1, \cdots, \mathbf{v}_N, \mathbf{m})} =
		\begin{pmatrix}
			\mathbb{J}^{\mathbf{xx}}_{11} & \cdots & \mathbb{J}^{\mathbf{xx}}_{1N} & \mathbb{J}^{\mathbf{xv}}_{11} & \cdots & \mathbb{J}^{\mathbf{xv}}_{1N} & \mathbb{J}^{\mathbf{xm}}_1 \\
			\vdots                        &        & \vdots                        & \vdots                        &        & \vdots                        & \vdots                     \\
			\mathbb{J}^{\mathbf{xx}}_{N1} & \cdots & \mathbb{J}^{\mathbf{xx}}_{NN} & \mathbb{J}^{\mathbf{xv}}_{N1} & \cdots & \mathbb{J}^{\mathbf{xv}}_{NN} & \mathbb{J}^{\mathbf{xm}}_N \\
			\mathbb{J}^{\mathbf{vx}}_{11} & \cdots & \mathbb{J}^{\mathbf{vx}}_{1N} & \mathbb{J}^{\mathbf{vv}}_{11} & \cdots & \mathbb{J}^{\mathbf{vv}}_{1N} & \mathbb{J}^{\mathbf{vm}}_1 \\
			\vdots                        &        & \vdots                        & \vdots                        &        & \vdots                        & \vdots                     \\
			\mathbb{J}^{\mathbf{vx}}_{N1} & \cdots & \mathbb{J}^{\mathbf{vx}}_{NN} & \mathbb{J}^{\mathbf{vv}}_{N1} & \cdots & \mathbb{J}^{\mathbf{vv}}_{NN} & \mathbb{J}^{\mathbf{vm}}_N \\
			\mathbb{J}^{\mathbf{mx}}_1    & \cdots & \mathbb{J}^{\mathbf{mx}}_N    & \mathbb{J}^{\mathbf{vm}}_1    & \cdots & \mathbb{J}^{\mathbf{vm}}_N    & \mathbb{J}^{\mathbf{mm}}
		\end{pmatrix},
	\end{equation*}
	where
	\begin{equation*}
		\begin{aligned}
			 & \mathbb{J}^{\mathbf{xx}}_{ij} = \mathbb{O}_d, \ \mathbb{J}^{\mathbf{xv}}_{ij} = \delta_{ij} \mathbb{I}_d, \  \mathbb{J}^{\mathbf{xm}}_i = \mathbb{O}_{d \times N},                                             \\
			 & \mathbb{J}^{\mathbf{vx}}_{ij} =  - \tfrac{1}{2 (m_i + \varepsilon)} \tfrac{\partial \dot{m}_i}{\partial \mathbf{x}_j} \otimes \mathbf{v}_i - \tfrac{w_i}{m_i + \epsilon} \delta_{ij} \nabla^2 F(\mathbf{x}_i), \\
			 & \mathbb{J}^{\mathbf{vv}}_{ij} = - \left(R + \tfrac{\dot{m}_i}{2 (m_i + \epsilon)} \right) \mathbb{I}_d \delta_{ij},                                                                                            \\
			 & \mathbb{J}^{\mathbf{vm}}_i = \sigma(\mathbf{m}) \mathbf{v}_i,                                                                                                                                                  \\
			 & \mathbb{J}^{\mathbf{mx}}_i = \chi(\nabla F(\mathbf{x}_i)) \mathbf{m},                                                                                                                                          \\
			 & \mathbb{J}^{\mathbf{mv}}_i = \mathbb{O}_{N \times d},                                                                                                                                                          \\
			 & \mathbb{J}^{\mathbf{mm}} = -(\mathbb{I}_N - \bm{\alpha} \mathbf{1}^\top) {\rm diag}(\Phi_p),
		\end{aligned}
	\end{equation*}
	Here, $\sigma(\mathbf{m})$ is a linear operator, and $\chi(0) = 0$.

	``$\Leftarrow$'': Assume that the vectors $\mathbf{x}_i^\star$ are distinct, non-degenerate local minima of $F(\mathbf{x})$, and that $\mathbf{v}_i = \mathbf{0}$ with $\mathbf{m} = \mathbf{e}_{i_-^\star}$. In this setting, we have $\nabla F(\mathbf{x}^\star) = 0$, and the Hessian $\nabla^2 F(\mathbf{x}^\star)$ is positive definite. Consequently, the triplet $(\mathbf{x}^\star, \mathbf{v}^\star, \mathbf{m}^\star)$ constitutes a steady state of \eqref{eq:dynamic-final}. To establish its linear stability, we investigate the eigenvalues of the Jacobian $\mathbb{J}^\star = \mathbb{J}(\mathbf{x}^\star, \mathbf{v}^\star, \mathbf{m}^\star)$. A straightforward computation yields
	\begin{equation*}
		\mathbb{J}^\star =
		\begin{pmatrix}
			\mathbb{O}_{dN}                                                                                       & \mathbb{I}_{dN}          & \mathbb{O}_{dN \times N}                                                          \\
			-\left[\mathbb{I}_d \otimes {\rm diag}(\mathbf{w}^\star_\epsilon)\right] \mathbb{H}(\mathbf{x}^\star) & -R \mathbb{I}_{dN}       & \mathbb{O}_{dN \times N}                                                          \\
			\mathbb{O}_{N \times dN}                                                                              & \mathbb{O}_{N \times dN} & - (\mathbb{I}_N - \mathbf{e}_{i^\star_-}\mathbf{1}^\top) {\rm diag}(\Phi^\star_p)
		\end{pmatrix},
	\end{equation*}
	and we observe that block $- (\mathbb{I}_N - \mathbf{e}_{i^\star_-}\mathbf{1}^\top) {\rm diag}(\Phi^\star_p)$  is upper triangular with diagonal entries $-\phi_p(\eta_1^\star)$, $-\phi_p(\eta_{i^\star_- - 1})$, $0$,  $-\phi_p(\eta_{i^\star_- + 1})$, $-\phi_p(\eta^\star_N)$. It is noteworthy that the zero eigenvalue arises from the mass constraint; indeed, one must demonstrate that $\mathbb{J}^\star$ has strictly negative eigenvalues when restricted to the manifold $\mathbb{R}^{Nd} \times \mathbb{R}^{Nd} \times \mathcal{M}$, with $\mathcal{M} = \{ \mathbf{m} \in \mathbb{R}^N: \mathbf{1}^\top \mathbf{m} = 1 \}$. More rigorously, we define a coordinate transformation
	\begin{equation*}
		\psi:\mathbb{R}^{N-1} \to \mathcal{M}, \ \psi(\mathbf{z}) = \left(1 - \sum\limits_{i=1}^{N-1} \mathbf{z}_i, \mathbf{z}_1, \cdots, \mathbf{z}_{N-1}\right),
	\end{equation*}
	and, by employing the above coordinate transformation, the original system is reformulated in terms of the variables $(\mathbf{x}, \mathbf{v}, \mathbf{z})$ as follows:
	\begin{equation*}
		\left\lbrace
		\begin{aligned}
			\dot {\mathbf{x}} & = \mathbf{v},                                                                                                                                                                                                                                      \\
			\dot{\mathbf{v}}  & = - \mathbb{I}_d \otimes {\rm diag}\left(R + \tfrac{1}{2} \dot{\widetilde{\mathbf{g}}}_\epsilon (\mathbf{z}) \right) \mathbf{v} - \mathbb{I}_d \otimes {\rm diag}\left(\widetilde{\mathbf{w}}_\epsilon (\mathbf{z})\right) \mathbf{G}(\mathbf{x}), \\
			\dot{\mathbf{z}}  & = - [D\psi(\mathbf{z})]^{\dagger} \left(\mathbb{I}_N -  \bm{\alpha} \mathbf{1}^\top\right) {\rm diag }(\Phi_p ) \psi(\mathbf{z}).
		\end{aligned}
		\right.
	\end{equation*}
	Here, the function $\widetilde{\mathbf{g}}_\epsilon: \mathbb{R}^{N-1} \to \mathbb{R}^N$ is defined via the composition $\widetilde{\mathbf{g}}_\epsilon = \mathbf{g}_\epsilon \circ \psi$, and $D\psi(\mathbf{z}) : \mathbb{R}^N \to \mathbb{R}^{N-1}$ denotes the non-degenerate Jacobian of $\psi$ at $\mathbf{z}$. Consequently, the restriction of the Jacobian $\mathbb{J}^\star$, denoted by $\widetilde{\mathbb{J}}^\star$, evaluated at $(\mathbf{x}^\star, \mathbf{v}^\star, \mathbf{z}^\star)$ may be written as
	\begin{equation*}
		\widetilde{\mathbb{J}}^\star =
		\begin{pmatrix}
			\mathbb{O}_{dN}                                                                                       & \mathbb{I}_{dN}            & \mathbb{O}_{dN \times N-1}                                                                                                                                                                    \\
			-\left[\mathbb{I}_d \otimes {\rm diag}(\mathbf{w}^\star_\epsilon)\right] \mathbb{H}(\mathbf{x}^\star) & -R \mathbb{I}_{dN}         & \mathbb{O}_{dN \times N-1}                                                                                                                                                                    \\
			\mathbb{O}_{N-1 \times dN}                                                                            & \mathbb{O}_{N-1 \times dN} & - \frac{\partial}{\partial \mathbf{z}}\left([D\psi(\mathbf{z}^\star)]^{\dagger} (\mathbb{I}_N - \mathbf{e}_{i^\star_-}\mathbf{1}^\top) {\rm diag}(\Phi^\star_p) \psi(\mathbf{z}^\star)\right)
		\end{pmatrix}.
	\end{equation*}
	It can be readily verified that all eigenvalues of $-\frac{\partial}{\partial \mathbf{z}}\left( [D\psi(\mathbf{z}^\star)]^{\dagger} (\mathbb{I}_N - \mathbf{e}_{i^\star_-}\mathbf{1}^\top) {\rm diag}(\Phi^\star_p) \psi(\mathbf{z}^\star)\right)$ are negative. Therefore, our attention shifts to the block
	\begin{equation*}
		\mathbb{S}^\star =
		\begin{pmatrix}
			\mathbb{O}_{dN}                                                                                       & \mathbb{I}_{dN}    \\
			-\left[\mathbb{I}_d \otimes {\rm diag}(\mathbf{w}^\star_\epsilon)\right] \mathbb{H}(\mathbf{x}^\star) & -R \mathbb{I}_{dN} \\
		\end{pmatrix}.
	\end{equation*}
	By invoking Lemma \ref{lem:stable-state}, one deduces that all eigenvalues of $\mathbb{S}^\star$ possess negative real parts.

	``$\Rightarrow$'': Conversely, suppose that $(\mathbf{x}^\star, \mathbf{v}^\star, \mathbf{m}^\star)$ represents a linearly stable state of \eqref{eq:dynamic-final}. A combination of the first and second equations in \eqref{eq:dynamic-final} implies that $\mathbf{v}^\star = 0$ and $\nabla F(\mathbf{x}_i^\star) = 0$. Furthermore, by invoking the final equation of \eqref{eq:dynamic-final} in conjunction with the definition of $\alpha_i$, it follows that for $i \neq i_{-}^\star$
	\begin{equation*}
		- \phi_p(\eta^\star_{i}(t)) m^\star_i(t) = 0,
	\end{equation*}
	and hence $m^\star_i(t) = 0 \ \forall i \neq i_i^\star$. By mass conservation, one concludes that $m_{i_-^\star} = 1$, which implies $\mathbf{m}^\star = \mathbf{e}_{i_-^\star}$. Finally, applying Lemma \ref{lem:stable-state} reveals that $H(\mathbf{x}_i^\star)$ is positive definite, thereby confirming that each $\mathbf{x}_i^\star$ is indeed a local minimum of $F(\mathbf{x})$. This completes the proof.
\end{proof}

\begin{rmk}
	We remark that in practice the mass constraint may be omitted; that is, one may simply adopt the following mass evolution equation
	\begin{equation*}
		\dot{m}_i = - \phi_p (\eta_i(t)) m_i(t), \ 1 \leq i \leq N.
	\end{equation*}
	Due to the definition of $\phi_p(\bullet)$, it is evident that the mass associated with the ``heavy'' agent who possesses a smaller value of $F(\bx)$ decreases more gradually than that of their ``light'' counterparts who has a larger value of $F(\bx)$. When the algorithm is stopped by a specified criterion, the agent with the largest mass is interpreted as the global minimum identified by the system. A potential concern for this practice is that the masses of all agents might decay too rapidly toward zero. This issue can be mitigated by introducing merging, renormalization and removal strategies for agents, as detailed in subsequent sections.
\end{rmk}

\subsection{Energy stable schemes for swarming dynamics with inertia}

In this section, we introduce several numerical schemes to solve system \eqref{eq:dynamic-final} (or equivalently, \eqref{eq:dynamic-final-compact}) in the context of structure-preserving approximations to highlight the preservation of energy dissipative properties. These methods are designed to effectively dissipate the mechanical energy of every agent at the discrete level. To facilitate the analysis, we assume that $F \in C^2(\Omega)$ and that $\nabla F(\mathbf{x})$ is Lipschitz continuous, with the Lipschitz constant given by
\begin{equation*}
	L := \max\limits_{\mathbf{x} \in \Omega} \|D^2F(\mathbf{x})\| < \infty.
\end{equation*}

The first algorithm  is the following IMEX method for \eqref{eq:dynamic-final-compact}.
\begin{alg}[SBI-IMEX]
	\begin{equation}\label{eq:IMEX-Euler}
		\left\lbrace
		\begin{aligned}
			h^{-1} (\mathbf{x}_i^{n+1} - \mathbf{x}_i^n) & = \mathbf{v}_i^{n+1},                                                                                                                                          \\
			h^{-1} (\mathbf{v}_i^{n+1} - \mathbf{v}_i^n) & = - \left(R + \tfrac{h^{-1}}{2}\tfrac{m_i^{n+1} - m_i^n}{m_i^n + \epsilon}\right) \mathbf{v}_i^{n+1} - \tfrac{w_i}{m_i^n + \epsilon} \nabla F(\mathbf{x}_i^n), \\
			h^{-1} (m_i^{n+1} - m_i^n)                   & = -\phi_p(\eta_i^n)m_i^n +  \alpha_i \sum\limits_{j=1}^N \phi_p(\eta_j^n) m_j^n.
		\end{aligned}
		\right.
	\end{equation}
	where $h$ is the step size to be selected. The stability of \eqref{eq:IMEX-Euler} is determined by two factors: one is whether the mass is bound-preserving, i.e., $m_i^0 \in [0, 1] \Rightarrow m_i^n \in [0, 1] \ \forall n$ and $1 \leq i \leq N-1$, and another is whether it decreases the mechanical energy of each agent. These two factors are guaranteed for the IMEX scheme by the following theorem.
\end{alg}

\begin{thm}\label{thm:IMEX-Euler}
	Suppose that the time step in \eqref{eq:IMEX-Euler} satisfies $h \leq \min \left\{\min\limits_{1 \leq i \leq N}\tfrac{2 R}{w_i L}, 1\right\}$, and that the initial mass lies in $[0, 1]$ with $\sum\limits_{i=1}^N m_i^0 = 1$. The IMEX method then preserves the bound of $m_i$ and the total mass, and dissipates the mechanical energy of each agent at every iteration as follows:
	\begin{equation*}
		E_i^{n+1} - E_i^n \leq - \frac{m_i^n + \epsilon}{2} \|\mathbf{v}_i^{n+1} - \mathbf{v}_i^n\|^2 - h \left(R(m^n_i+\epsilon) - \frac{1}{2} h w_i L\right)\|\mathbf{v}_i^{n+1}\|^2,
	\end{equation*}
	where $E_i^n$ is the discrete energy, defined by:
	\begin{equation}\label{eq:discrete-energy}
		E_i^n = \frac{m_i^n + \epsilon}{2} \|\mathbf{v}_i^n\|^2 + w_i F(\mathbf{x}_i^n).
	\end{equation}
\end{thm}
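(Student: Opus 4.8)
The plan is to establish the three assertions in the order they build on one another: mass conservation, then the mass bound, and finally the per-agent energy law, which is where the substance lies.

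\textbf{Mass conservation and bounds.} First I would sum the third line of \eqref{eq:IMEX-Euler} over $i$ and invoke $\sum_{i=1}^N \alpha_i = 1$; the two copies of $\sum_j \phi_p(\eta_j^n)m_j^n$ cancel, giving $\sum_i m_i^{n+1} = \sum_i m_i^n$, so by induction $\sum_i m_i^n = 1$ for all $n$. For the bound I would use the explicit choice $\alpha_i = \delta_{i,i_-}$. When $i \neq i_-^n$ the update collapses to $m_i^{n+1} = (1 - h\phi_p(\eta_i^n))m_i^n$; since $F(\mathbf{x}_{i_-}) \le F(\mathbf{x}_i) \le F(\mathbf{x}_{i_+})$ and $\epsilon>0$ force $\eta_i^n \in (0,1]$, hence $\phi_p(\eta_i^n) \in (0,1]$, and since $h \le 1$, the factor $1 - h\phi_p(\eta_i^n)$ lies in $[0,1]$, so $0 \le m_i^{n+1} \le m_i^n \le 1$. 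For $i = i_-^n$ the increment equals $\sum_{j \ne i_-^n}\phi_p(\eta_j^n)m_j^n \ge 0$, so $m_{i_-}^{n+1} \ge m_{i_-}^n \ge 0$, while conservation together with nonnegativity of the remaining components forces $m_{i_-}^{n+1} = 1 - \sum_{i\ne i_-^n} m_i^{n+1} \le 1$. Induction closes the bound.

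\textbf{The energy law.} Starting from $E_i^{n+1}-E_i^n = \tfrac{m_i^{n+1}+\epsilon}{2}\|\mathbf{v}_i^{n+1}\|^2 - \tfrac{m_i^n+\epsilon}{2}\|\mathbf{v}_i^n\|^2 + w_i(F(\mathbf{x}_i^{n+1}) - F(\mathbf{x}_i^n))$, I would treat the kinetic and potential pieces separately. For the kinetic piece I insert and subtract $\tfrac{m_i^n+\epsilon}{2}\|\mathbf{v}_i^{n+1}\|^2$ and apply the polarization identity $\|a\|^2-\|b\|^2 = 2(a,a-b)-\|a-b\|^2$ with $a=\mathbf{v}_i^{n+1},\,b=\mathbf{v}_i^n$, which rewrites it as $\tfrac{m_i^{n+1}-m_i^n}{2}\|\mathbf{v}_i^{n+1}\|^2 + (m_i^n+\epsilon)(\mathbf{v}_i^{n+1},\mathbf{v}_i^{n+1}-\mathbf{v}_i^n) - \tfrac{m_i^n+\epsilon}{2}\|\mathbf{v}_i^{n+1}-\mathbf{v}_i^n\|^2$. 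The crucial move is to multiply the velocity update in \eqref{eq:IMEX-Euler} through by $(m_i^n+\epsilon)$ and pair it with $\mathbf{v}_i^{n+1}$, which yields $(m_i^n+\epsilon)(\mathbf{v}_i^{n+1},\mathbf{v}_i^{n+1}-\mathbf{v}_i^n) = -hR(m_i^n+\epsilon)\|\mathbf{v}_i^{n+1}\|^2 - \tfrac12(m_i^{n+1}-m_i^n)\|\mathbf{v}_i^{n+1}\|^2 - hw_i(\nabla F(\mathbf{x}_i^n),\mathbf{v}_i^{n+1})$. The $-\tfrac12(m_i^{n+1}-m_i^n)\|\mathbf{v}_i^{n+1}\|^2$ term cancels the leading mass-rate term exactly; this is precisely why the half-rate damping $\tfrac{\dot m_i}{2(m_i+\epsilon)}$ was built into the model and discretized with $\mathbf{v}_i^{n+1}$ implicit.

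For the potential piece I would use $F\in C^2$ with $\|D^2F\|\le L$ together with $\mathbf{x}_i^{n+1}-\mathbf{x}_i^n = h\mathbf{v}_i^{n+1}$ to get the Taylor bound $w_i(F(\mathbf{x}_i^{n+1})-F(\mathbf{x}_i^n)) \le hw_i(\nabla F(\mathbf{x}_i^n),\mathbf{v}_i^{n+1}) + \tfrac{h^2 w_i L}{2}\|\mathbf{v}_i^{n+1}\|^2$. Adding the kinetic identity and this bound, the two gradient-pairing terms $\pm hw_i(\nabla F(\mathbf{x}_i^n),\mathbf{v}_i^{n+1})$ cancel, leaving exactly $-\tfrac{m_i^n+\epsilon}{2}\|\mathbf{v}_i^{n+1}-\mathbf{v}_i^n\|^2 - h\big(R(m_i^n+\epsilon) - \tfrac12 h w_i L\big)\|\mathbf{v}_i^{n+1}\|^2$, the claimed estimate. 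The restriction $h \le 2R/(w_iL)$ then governs the sign of $R(m_i^n+\epsilon)-\tfrac12 hw_iL$ so the right-hand side is dissipative, while $h\le 1$ was already consumed in the mass bound.

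\textbf{Main obstacle.} The difficulty is organizational rather than deep: one must choose the insert-subtract splitting of the kinetic term and scale the velocity equation by $(m_i^n+\epsilon)$ so that the two engineered cancellations line up — the half-mass-rate term against the polarization remainder, and the explicit gradient pairing against the Taylor cross term. Tracking these so that nothing but the manifestly dissipative squares survives is the delicate step, and it is exactly what dictates the implicit treatment of $\mathbf{v}_i^{n+1}$ and the explicit treatment of $\nabla F(\mathbf{x}_i^n)$ in \eqref{eq:IMEX-Euler}.
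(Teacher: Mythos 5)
Your proof is correct and follows essentially the same route as the paper: the same pairings (the velocity update scaled by $(m_i^n+\epsilon)$ and tested against $\mathbf{v}_i^{n+1}$, the position update tested against the explicit gradient), the same polarization identity, and the same Lipschitz--Taylor bound, producing the identical pair of cancellations and the stated estimate. The only cosmetic difference is in the mass bound, where you specialize to $\alpha_i=\delta_{i,i_-}$ and obtain the upper bound for the minimizer's mass from conservation plus nonnegativity, whereas the paper's direct estimate covers arbitrary weights $\alpha_i\geq 0$ with $\sum_{i=1}^N\alpha_i=1$; both arguments are valid.
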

\begin{proof}
	We begin by proving the bound-preserving property of the mass of each agent using the mathematical induction. The property holds for the case $n = 0$ according to the choice of the initial condition. Assuming that for every $1 \leq i \leq N$, we have $m_i^n \in [0, 1]$, we now demonstrate that $m_i^{n+1} \in [0,1], \ 1 \leq i \leq N$. Notice that we have $0 \leq \phi_p(\eta_i^n) \leq 1$ and $\sum_{i=1}^N m_i^n = 1$ according to their definitions. Consequently,
	\begin{equation*}
		m_i^{n+1} = (1 - h \phi_p(\eta_i^n)) m_i^n + h \alpha_i \sum\limits_{j=1}^N \phi_p(\eta_j^n) m_j^n \geq 0.
	\end{equation*}
	\begin{equation*}
		\begin{aligned}
			m_i^{n+1} & = \left(1 -  h (1-\alpha_i) \phi_p(\eta_i^n) \right) m_i^n + h \alpha_i \sum_{\substack{j=1 \\ j\neq i}}^N \phi_p(\eta_j^n) m_j^n \\
			          & \leq m_i^n + h\alpha_i \sum_{\substack{j=1                                                  \\ j\neq i}}^N m_j^n = m_i^n + h\alpha_i(1 - m_i^n) \\
			          & = \left(1 - h\alpha_i\right)m_i^n + h\alpha_i \leq 1.
		\end{aligned}
	\end{equation*}
	The total mass conservation is shown by summing all the equations of the masses.

	To prove the dissipation property, we take the inner product of the first equation in \eqref{eq:IMEX-Euler} with $h\nabla F(\mathbf{x}_i^n)$ and the inner product of the second equation with $h(m_i^n + \epsilon) \mathbf{v}_i^{n+1}$. Consequently, we obtain
	\begin{equation}\label{eq:proof-ep-euler1}
		(\nabla F(\mathbf{x}_i^n), \mathbf{x}_i^{n+1} - \mathbf{x}_i^n) = h (\nabla F(\mathbf{x}_i^n), \mathbf{v}_i^{n+1}),
	\end{equation}
	\begin{equation}\label{eq:proof-ep-euler2}
		\begin{aligned}
			(m_i^n+\epsilon) (\mathbf{v}_i^{n+1}, \mathbf{v}_i^{n+1} - \mathbf{v}_i^n) & = -h \left(R(m^n_i+\epsilon) + \tfrac{h^{-1}}{2}(m_i^{n+1} - m_i^n)\right) \|\mathbf{v}_i^{n+1}\|^2 \\
			                                                                           & \quad - h w_i (\nabla F(\mathbf{x}_i^n), \mathbf{v}_i^{n+1}).
		\end{aligned}
	\end{equation}
	Combining \eqref{eq:proof-ep-euler1} and \eqref{eq:proof-ep-euler2} with the following identity
	\begin{equation}
		(\mathbf{v}_i^{n+1}, \mathbf{v}_i^{n+1} - \mathbf{v}_i^n) = \frac{1}{2}\|\mathbf{v}_i^{n+1}\|^2 - \frac{1}{2} \|\mathbf{v}_i^n\|^2 + \frac{1}{2}\|\mathbf{v}_i^{n+1} - \mathbf{v}_i^n \|^2,
	\end{equation}
	we have
	\begin{equation}\label{eq:add_01}
		\begin{aligned}
			 & \tfrac{m_i^n + \epsilon}{2} (\|\mathbf{v}^{n+1}_i\|^2 - \|\mathbf{v}_i^n\|^2) + \tfrac{m_i^{n+1} - m_i^n}{2} \|\mathbf{v}_i^{n+1}\|^2 +  \tfrac{m_i^n + \epsilon}{2} \|\mathbf{v}_i^{n+1} - \mathbf{v}_i^n\|^2 \\
			 & \quad = - h R(m_i^n + \epsilon) \|\mathbf{v}_i^{n+1}\|^2 - w_i (\nabla F(\mathbf{x}_i^n), \mathbf{x}_i^{n+1} - \mathbf{x}_i^n).
		\end{aligned}
	\end{equation}
	The Taylor's formula gives us
	\begin{equation} \label{eq:add_02}
		\begin{aligned}
			F(\mathbf{x}_i^{n+1}) - F(\mathbf{x}_i^n) & \leq (\nabla F(\mathbf{x}_i^n), \mathbf{x}_i^{n+1} - \mathbf{x}_i^n) + \tfrac{L}{2} \|\mathbf{x}_i^{n+1} - \mathbf{x}_i^n\|^2 \\
			                                          & = (\nabla F(\mathbf{x}_i^n), \mathbf{x}_i^{n+1} - \mathbf{x}_i^n) + \tfrac{h^2 L}{2} \|\mathbf{v}_i^{n+1}\|^2.
		\end{aligned}
	\end{equation}
	Multiply both sides of \eqref{eq:add_02} by $w_i$ and adding the resulting expression to \eqref{eq:add_01}, noticing the identity
	\begin{equation*}
		\tfrac{m_i^n + \epsilon}{2} (\|\mathbf{v}^{n+1}_i\|^2 - \|\mathbf{v}_i^n\|^2) + \tfrac{m_i^{n+1} - m_i^n}{2} \|\mathbf{v}_i^{n+1}\|^2 = \tfrac{m_i^{n+1} + \epsilon}{2} \|\mathbf{v}^{n+1}_i\|^2 - \tfrac{m_i^{n} + \epsilon}{2} \|\mathbf{v}^{n}_i\|^2,
	\end{equation*}
	we obtain
	\begin{equation*}
		\begin{aligned}
			 & \tfrac{m_i^{n+1} + \epsilon}{2} \|\mathbf{v}^{n+1}_i\|^2 + w_i F(\mathbf{x}_i^{n+1}) - ( \tfrac{m_i^{n} + \epsilon}{2} \|\mathbf{v}^{n}_i\|^2 + w_i F(\mathbf{x}_i^{n}))           \\
			 & \quad \leq  - h R(m_i^n + \epsilon) \|\mathbf{v}_i^{n+1}\|^2 + \tfrac{h^2 w_i L}{2} \|\mathbf{v}_i^{n+1}\|^2 - \tfrac{m_i^n+\epsilon}{2}\|\mathbf{v}_i^{n+1} - \mathbf{v}_i^n\|^2.
		\end{aligned}
	\end{equation*}
	The proof is thus completed.
\end{proof}
This theorem demonstrates that the discrete energy dissipation rate is enhanced by an additional term associate with the inertia $- \frac{m_i^n + \epsilon}{2} \|\mathbf{v}_i^{n+1} - \mathbf{v}_i^n\|^2,$ highlighting the role numerical inertia plays in this numerical algorithm.

A primary limitation of the SBI-IMEX scheme is its stringent restriction on the time step size. To alleviate this constraint, we introduce the stabilized SBI-IMEX (SBI-SIMEX) scheme.
\begin{alg}[SBI-SIMEX]
	\begin{equation}\label{eq:SIMEX}
		\left\lbrace
		\begin{aligned}
			h^{-1} (\mathbf{x}_i^{n+1} - \mathbf{x}_i^n) & = \mathbf{v}_i^{n+1}                                                                                                                                            \\
			h^{-1} (\mathbf{v}_i^{n+1} - \mathbf{v}_i^n) & = - \left(R + \tfrac{h^{-1}}{2}\tfrac{m_i^{n+1} - m_i^n}{m_i^n + \epsilon} \right) \mathbf{v}_i^{n+1} - \tfrac{w_i \kappa}{m_i^n + \epsilon} \mathbf{x}_i^{n+1} \\
			                                             & \quad + \tfrac{w_i}{m_i^n + \epsilon} \left(\kappa \mathbf{x}_i^n -  \nabla F(\mathbf{x}_i^n)\right)                                                            \\
			h^{-1} (m_i^{n+1} - m_i^n)                   & = -\phi_p(\eta_i^n) m_i^{n} + \alpha_i \sum\limits_{j=1}^N \phi_p(\eta_j^n)m_j^n.
		\end{aligned}
		\right.
	\end{equation}
\end{alg}
\begin{thm}
	Suppose that the time step in \eqref{eq:SIMEX} satisfies $h \leq 1$, the stabilization parameters $\kappa \geq L$, and  the initial mass lies in $[0, 1]$ with $\sum\limits_{i=1}^N m_i^0 = 1$. The SBI-SIMEX method preserves the mass conservation property and dissipates the mechanical energy of each agent at every iteration as follows:
	\begin{equation*}
		E_i^{n+1} - E_i^n \leq - \frac{m_i^n + \epsilon}{2} \|\mathbf{v}_i^{n+1} - \mathbf{v}_i^n\|^2 - h R_i\left(\mathbf{m}^n\right) \|\mathbf{v}_i^{n+1}\|^2,
	\end{equation*}
	where, \(E_i^n\) denotes the discrete energy, defined by \eqref{eq:discrete-energy}.
\end{thm}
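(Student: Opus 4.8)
The plan is to follow the same two-part structure as the proof of Theorem~\ref{thm:IMEX-Euler}. Since the mass update in \eqref{eq:SIMEX} coincides exactly with that in \eqref{eq:IMEX-Euler}, the conservation and bound-preservation arguments transfer verbatim; only the energy estimate must be reworked, because the explicit force $-\tfrac{w_i}{m_i^n+\epsilon}\nabla F(\mathbf{x}_i^n)$ is now replaced by the stabilized force $-\tfrac{w_i\kappa}{m_i^n+\epsilon}\mathbf{x}_i^{n+1}+\tfrac{w_i}{m_i^n+\epsilon}\bigl(\kappa\mathbf{x}_i^n-\nabla F(\mathbf{x}_i^n)\bigr)$.

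First I would record mass conservation: summing the third equation of \eqref{eq:SIMEX} over $i$ and using $\sum_i\alpha_i=1$, the term $\sum_i\alpha_i\sum_j\phi_p(\eta_j^n)m_j^n$ cancels $\sum_i\phi_p(\eta_i^n)m_i^n$, so $\sum_i m_i^{n+1}=\sum_i m_i^n=1$. The bound $m_i^n\in[0,1]$ then follows by the identical induction used in Theorem~\ref{thm:IMEX-Euler} once $h\leq1$, since $0\leq\phi_p(\eta_i^n)\leq1$.

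For the energy inequality, I would take the inner product of the second equation of \eqref{eq:SIMEX} with $h(m_i^n+\epsilon)\mathbf{v}_i^{n+1}$ and then use the first equation $h\mathbf{v}_i^{n+1}=\mathbf{x}_i^{n+1}-\mathbf{x}_i^n$ to convert every velocity pairing into a position increment. The key simplification is that the two stabilization contributions merge,
\[
hw_i\kappa\bigl(\mathbf{x}_i^n-\mathbf{x}_i^{n+1},\mathbf{v}_i^{n+1}\bigr)=-w_i\kappa\|\mathbf{x}_i^{n+1}-\mathbf{x}_i^n\|^2,
\]
while the gradient contribution becomes $-w_i(\nabla F(\mathbf{x}_i^n),\mathbf{x}_i^{n+1}-\mathbf{x}_i^n)$. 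Applying the polarization identity for $(\mathbf{v}_i^{n+1},\mathbf{v}_i^{n+1}-\mathbf{v}_i^n)$ together with the telescoping identity for $\tfrac{m_i^n+\epsilon}{2}(\|\mathbf{v}_i^{n+1}\|^2-\|\mathbf{v}_i^n\|^2)+\tfrac{m_i^{n+1}-m_i^n}{2}\|\mathbf{v}_i^{n+1}\|^2$ (both already invoked in Theorem~\ref{thm:IMEX-Euler}) turns the left-hand side into the kinetic part of $E_i^{n+1}-E_i^n$.

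The decisive step, and the only place where $\kappa\geq L$ is used, is to control the gradient term by Taylor's inequality $w_i(F(\mathbf{x}_i^{n+1})-F(\mathbf{x}_i^n))\leq w_i(\nabla F(\mathbf{x}_i^n),\mathbf{x}_i^{n+1}-\mathbf{x}_i^n)+\tfrac{w_iL}{2}\|\mathbf{x}_i^{n+1}-\mathbf{x}_i^n\|^2$. Adding the potential increment and combining with the stabilization term produces the coefficient $-w_i(\kappa-\tfrac{L}{2})\|\mathbf{x}_i^{n+1}-\mathbf{x}_i^n\|^2$, which is nonpositive whenever $\kappa\geq L$ (indeed $\kappa-\tfrac{L}{2}\geq\tfrac{L}{2}\geq0$); discarding it yields exactly
\[
E_i^{n+1}-E_i^n\leq-\tfrac{m_i^n+\epsilon}{2}\|\mathbf{v}_i^{n+1}-\mathbf{v}_i^n\|^2-hR(m_i^n+\epsilon)\|\mathbf{v}_i^{n+1}\|^2,
\]
that is, the claim with $R_i(\mathbf{m}^n)=R(m_i^n+\epsilon)$. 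The essential point, and the reason the scheme is unconditionally stable, is that the implicit stabilization shifts the destabilizing $O(\|\mathbf{x}_i^{n+1}-\mathbf{x}_i^n\|^2)$ term onto $\kappa$ rather than onto $h$: in the IMEX analysis the analogous positive term was $\tfrac{h^2w_iL}{2}\|\mathbf{v}_i^{n+1}\|^2$, which had to be absorbed by $hR(m_i^n+\epsilon)\|\mathbf{v}_i^{n+1}\|^2$ and forced $h\leq2R/(w_iL)$, whereas here the control is independent of $h$. I expect the only delicate bookkeeping to be keeping the $\mathbf{x}_i^{n+1}$ and $\mathbf{x}_i^n$ arguments straight so that the kinetic and potential energies telescope exactly.
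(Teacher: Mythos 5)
Your proposal is correct and follows essentially the same route as the paper: both take the inner product of the momentum equation with $h(m_i^n+\epsilon)\mathbf{v}_i^{n+1}$, use $h\mathbf{v}_i^{n+1}=\mathbf{x}_i^{n+1}-\mathbf{x}_i^n$ so that the stabilization terms yield $-w_i\kappa\|\mathbf{x}_i^{n+1}-\mathbf{x}_i^n\|^2$, and absorb the Taylor remainder $\tfrac{w_iL}{2}\|\mathbf{x}_i^{n+1}-\mathbf{x}_i^n\|^2$ into it via $\kappa\geq L$ (the paper packages these two steps into the single inequality $F(\mathbf{x}^{n+1})-F(\mathbf{x}^n)\leq(\kappa(\mathbf{x}^{n+1}-\mathbf{x}^n)+\nabla F(\mathbf{x}^n),\mathbf{x}^{n+1}-\mathbf{x}^n)$, which is the same content). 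Your explicit identification $R_i(\mathbf{m}^n)=R(m_i^n+\epsilon)$ and the remark on why the stabilization decouples the energy estimate from $h$ are both consistent with, and slightly more detailed than, the paper's ``straightforward calculations.''
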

\begin{proof}
	The proof of the bound-preserving property of the mass is similar to that provided in the proof of Theorem \ref{thm:IMEX-Euler}. We focus on the discrete energy dissipation law.

	Taking the inner product of both sides of the first equation with $\kappa \mathbf{x}_i^{n+1} - \kappa \mathbf{x}_i^n + \nabla F(\mathbf{x}_i^n)$ and applying the following inequality:
	\begin{equation*}
		F(\mathbf{x}^{n+1}) - F(\mathbf{x}^n) \leq (\kappa \mathbf{x}^{n+1} - \kappa \mathbf{x}^n + \nabla F(\mathbf{x}^n), \mathbf{x}^{n+1} - \mathbf{x}^n),
	\end{equation*}
	we obtain
	\begin{equation*}
		F(\mathbf{x}_i^{n+1}) - F(\mathbf{x}_i^n) \leq h (\kappa \mathbf{x}_i^{n+1}- \kappa \mathbf{x}_i^n + \nabla F(\mathbf{x}_i^n), \mathbf{v}_i^{n+1}).
	\end{equation*}
	Taking the inner product of both sides of the second equation with $h(m_i^n+\epsilon) \mathbf{v}_i^{n+1}$, and then combining the resulting expression with the inequality derived above, yields the desired result after straightforward calculations.
\end{proof}
The stabilized algorithm reduces the discrete numerical energy dissipation rate further, indicating more rapid convergence in searching for minima.

In practice, iterating over the entire ensemble of agents until convergence is typically inefficient, as the computational cost of the SBI method escalates with the increase of the number of agents. To address this computational issue, it is essential to merge those agents that are in close proximity and to remove those ensnared in local minima. We illustrate the complete SBI method, incorporating both merging and removal strategies as follows:
\begin{table}[H]
	\centering
	\caption{Implementation of Swarm-based Inertial Method}
	\label{tab:algo}
	\fontsize{10pt}{12pt}\selectfont
	\renewcommand{\arraystretch}{1.2}
	\begin{tabular*}{0.9\textwidth}{@{\extracolsep{\fill}}l} \toprule[2pt]
		{\bf Input:} \quad Initial positions $\mathbf{x}_i^0$, velocities $\mathbf{v}_i^0$, masses $m_i^0$; \\
		\quad Maximum iterations $N_{iter}$, number of agents $N$; \\
		\quad Tolerance parameters: $tol_m$, $tol_{merge}$, $tol_{res}$ \\
		{\bf Output:} \quad Minimum value $\mathbf{x}_{min}$ \\ \midrule[1pt]
		{\bf for} $k=1,\cdots,N_{iter}$ \\
		\quad {\bf if} $N>1$ \\
		\quad\quad {\bf for} $i=1,\cdots,N$ \\
		\quad\quad\quad Update $m_i^{n+1}$, $\mathbf{x}_i^{n+1}$, $\mathbf{v}_i^{n+1}$ by solving (\ref{eq:dynamic-final}) \\
		\quad\quad {\bf end for} \\
		\quad\quad Move $\mathbf{x}_i^{n+1}$ if $m_i < \tfrac{1}{N}\,tol_m$\\
		\quad\quad Merge two agents by setting $(\mathbf{x}_i, \mathbf{v}_i, m_i) =(\tfrac{\mathbf{x}_i + \mathbf{x}_j}{2}, \tfrac{\mathbf{v}_i + \mathbf{v}_j}{2}, m_i + m_j)$, if $\|\mathbf{x}_i-\mathbf{x}_j\| \leq tol_{merge}$ \\
		\quad {\bf else} \quad (Only one particle) \\
		\quad\quad {\bf while} $\|\mathbf{x}^{n+1}-\mathbf{x}^n\|\ge tol_{res}$ \\
		\quad\quad\quad $\mathbf{x}^{n+1}=\mathbf{x}^n-h\nabla F(\mathbf{x}^n)$ \\
		\quad\quad {\bf end while} \\
		\quad {\bf end if} \\
		{\bf end for} \\ \bottomrule[2pt]
	\end{tabular*}
\end{table}
This practical strategy for merging and removal can significantly speed up the search without missing the target.

\section{Numerical Results}

We apply the algorithms to several test problems to 1) show that they are energy stable, i.e., dissipating total mechanical energy; and 2) compare them with the swarm-based gradient descent method (SBGD) to show their efficacy. In all experiments, unless otherwise specified, the tolerances in the Algorithm \ref{tab:algo} are set as follows: $tol_m = 10^{-4}, tol_{merge} = 10^{-3}, tol_{res} = 10^{-5}$.

\subsection{Test of energy dissipation and comparisons with SBGD}
In the first numerical experiment, we illustrate the efficacy of the proposed scheme using the following objective function:
\ben
F(x) = e^{\sin{(2x^2)}} + \frac{1}{10} (x - \frac{\pi}{2}). \label{eq:objective-01}
\een
As depicted in Figure \ref{fig:plot-ex1-objective}, this function exhibits multiple local minima, with the global minimum located at $(x^\star \approx 1.5355)$.

\begin{figure}[H]
	\begin{center}
		\includegraphics[width=0.3\textwidth]{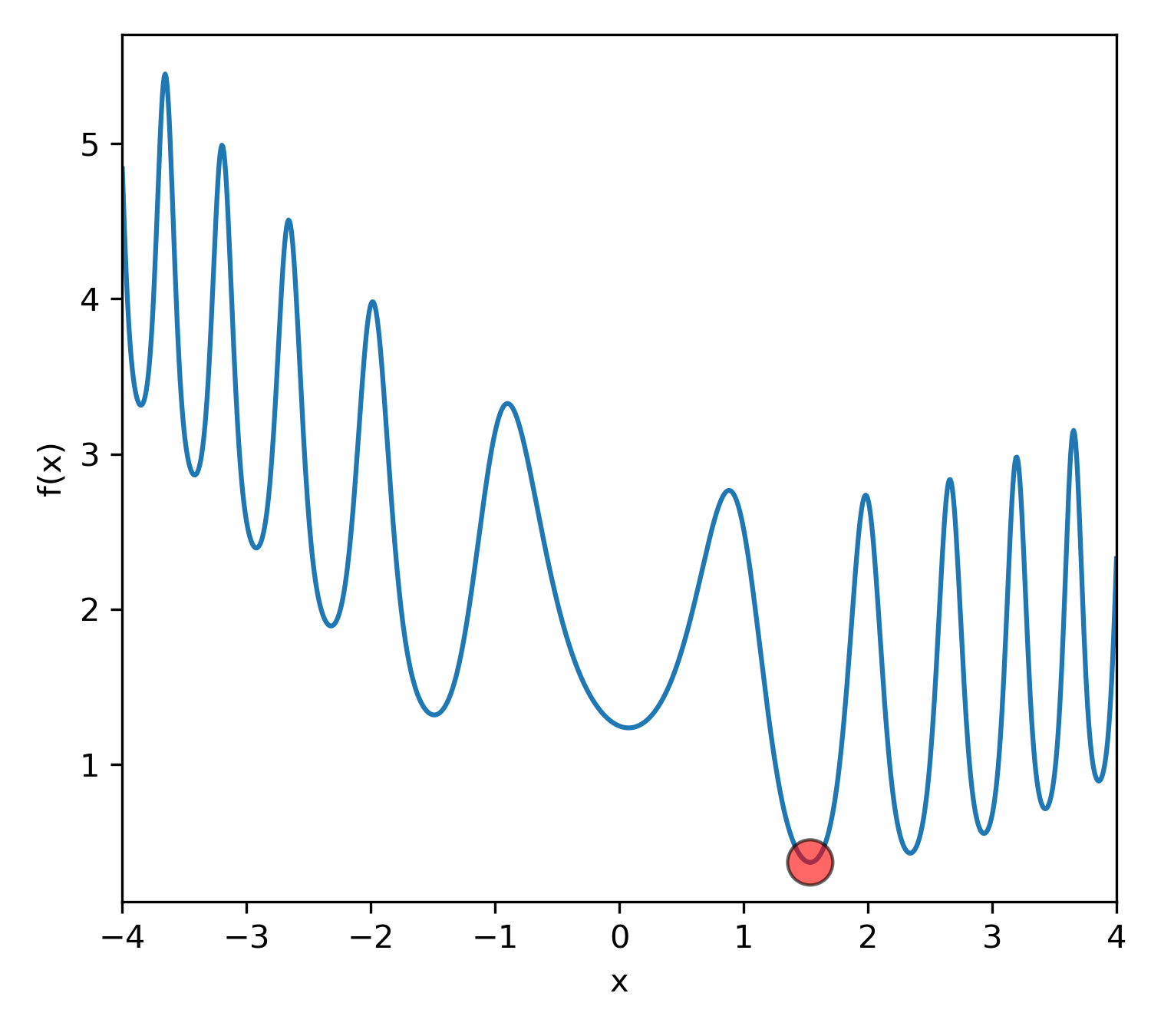}
	\end{center}
	\caption{Plot of the objective function in \eqref{eq:objective-01}}\label{fig:plot-ex1-objective}
\end{figure}

In the subsequent tests, we  vary the number of agents, uniformly distributed within the interval $[-3, -1]$, and assess the success rate of each method in locating the global minimum. For the SBI-SIMEX method, we set $w_i = 10^{-4}$, $R = 1$, and employ a stabilized parameter $\kappa = 10$ with a time step size of $h = 0.5$. Furthermore, the initial velocities of the particles are uniformly sampled from the interval $[1, 5]$.

We first assess the dissipative properties of the proposed SBI-SIMEX scheme by initializing five particles. To provide a clear depiction of the energy evolution of the global system and individual particles, we refrain from merging or removing particles throughout the iterations in this test. The first and second subplots of Figure \ref{fig:plot-ex1-energy} illustrates the energy evolution of individual agents and the overall system. Notice that both the energy of each individual agent and the total system exhibit a monotonically decreasing trend over time towards zero. The last subplot of Figure \ref{fig:plot-ex1-energy} is the time history of the values of the objective function for each agent, which implies that the first and the fourth agent successfully find the global minimum at the steady state.

We evaluate the performance of the SBI‐IMEX and SBI‐SIMEX methods under both mass‐constrained and unconstrained conditions. Table \ref{tab:ex1_comparison_00} provides a comprehensive comparative analysis of the different strategies. It is evident that the success rates of the proposed methods are remarkably similar.
\begin{table}[H]
	\caption{Success rates of different methods for global optimization based on 1000 runs using uniformly generated initial data in $[-3, -1]$ and initial velocity in $[1, 5]$. The stabilization parameter in SBI-SIMEX is taken as $\kappa = 10$.} \label{tab:ex1_comparison_00}
	\begin{center}
		\begin{tabular}[c]{c c c c c c}
			\hline
			N                                     & 5        & 10       & 15       & 20       & 30      \\
			\hline
			SBI-SIMEX                             & $78.8\%$ & $96.5\%$ & $99.1\%$ & $99.8\%$ & $100\%$ \\
			SBI-SIMEX (without mass conservation) & $76.4\%$ & $95.1\%$ & $99.2\%$ & $99.9\%$ & $100\%$ \\
			SBI-IMEX                              & $82.0\%$ & $95.8\%$ & $99.5\%$ & $99.8\%$ & $100\%$ \\
			SBI-IMEX (without mass conservation)  & $77.0\%$ & $94.7\%$ & $99.0\%$ & $99.9\%$ & $100\%$ \\
			\hline
		\end{tabular}
	\end{center}
\end{table}

\begin{figure}[H]
	\begin{center}
		\includegraphics[width=0.3\textwidth]{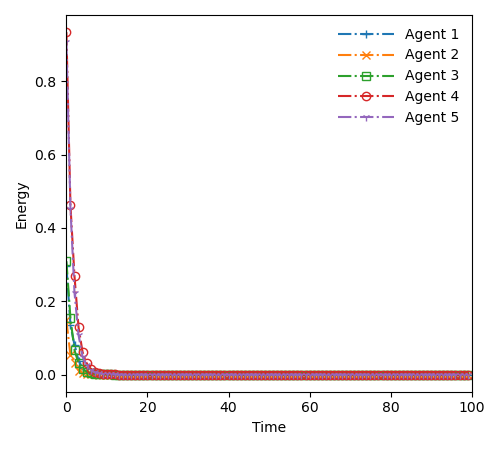}
		\includegraphics[width=0.3\textwidth]{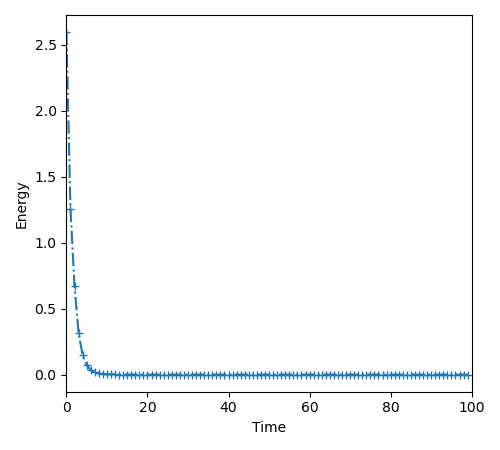}
		\includegraphics[width=0.3\textwidth]{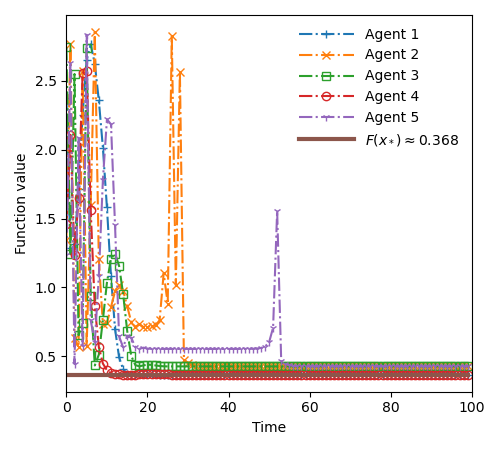}
	\end{center}
	\caption{Temporal evolution of the energy for each agent and the overall system and values of objective function for each agent, respectively.}\label{fig:plot-ex1-energy}
\end{figure}

Next, we compare our proposed approach with the swarm-based gradient descent method (SBGD) introduced in \cite{Lu2024SwarmGD}. Table \ref{tab:ex1_comparison} shows a comparative analysis of SBGD and SBI-SIMEX method, summarizing the results from 1000 independent simulation runs. The results indicate that as the number of agents increases, all methods achieve a high success rate in identifying the global minimum. However, when the number of agents is limited, the SBI-SIMEX scheme demonstrates superior performance.
\begin{table}[H]
	\caption{Success rates of different methods for global optimization based on 1000 runs using uniformly generated initial data in $[-3, -1]$. The results of SBGD are obtained from \cite{Lu2024SwarmGD}.}\label{tab:ex1_comparison}
	\begin{center}
		\begin{tabular}[c]{c c c c c c}
			\hline
			N           & 5        & 10       & 15       & 20       & 30      \\
			\hline
			SBN-SIMEX   & $78.8\%$ & $96.5\%$ & $99.1\%$ & $99.8\%$ & $100\%$ \\
			SBGD$_{11}$ & $36.5\%$ & $83.1\%$ & $97.2\%$ & $99.5\%$ & $100\%$ \\
			SBGD$_{21}$ & $42.4\%$ & $91.4\%$ & $99.0\%$ & $99.8\%$ & $100\%$ \\
			\hline
		\end{tabular}
	\end{center}
\end{table}

\subsection{Effects of the initial velocity and parameters}

One of the keys to the SBI method's success in identifying the ``global'' minimum lies in the selection of the initial velocities and parameters $w_i, i=1, \cdots, N,$ and $R$. The initial velocity is critical in determining whether the agents can overcome local energy barriers to jump to an adjacent valley. When the magnitude of the initial velocity is insufficient, agents may not possess the inertia required to traverse the distance to the global minimum from their starting positions. Conversely, if the initial velocity is excessively high, agents risk overshooting and consequently fail to thoroughly explore regions proximate to local minima. The parameter $w_i$ is instrumental in balancing the contributions of inertia and potential energy, while $R$ modulates the rate of energy decay. In practice, larger values of $w_i$ and $R$ are advantageous when the initial velocity is high, as they help prevent agents from straying excessively far from the optimum. Conversely, smaller values of $w_i$ and $R$ can be beneficial when the initial velocity is low, as they enhance the influence of inertia. These effects will be demonstrated through several illustrative examples.
\begin{figure}[H]
	\begin{center}
		\includegraphics[width=0.15\textwidth]{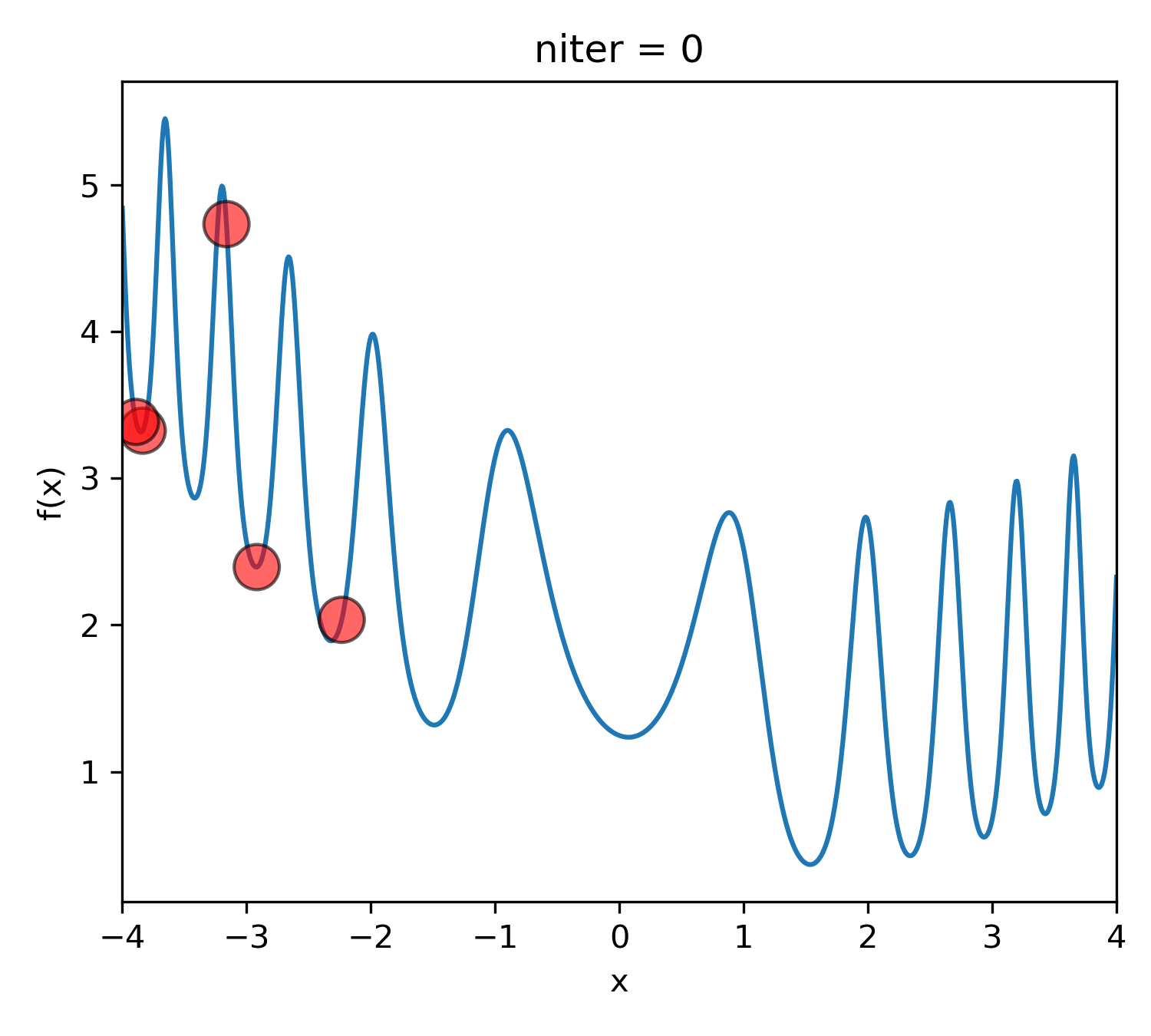}
		\includegraphics[width=0.15\textwidth]{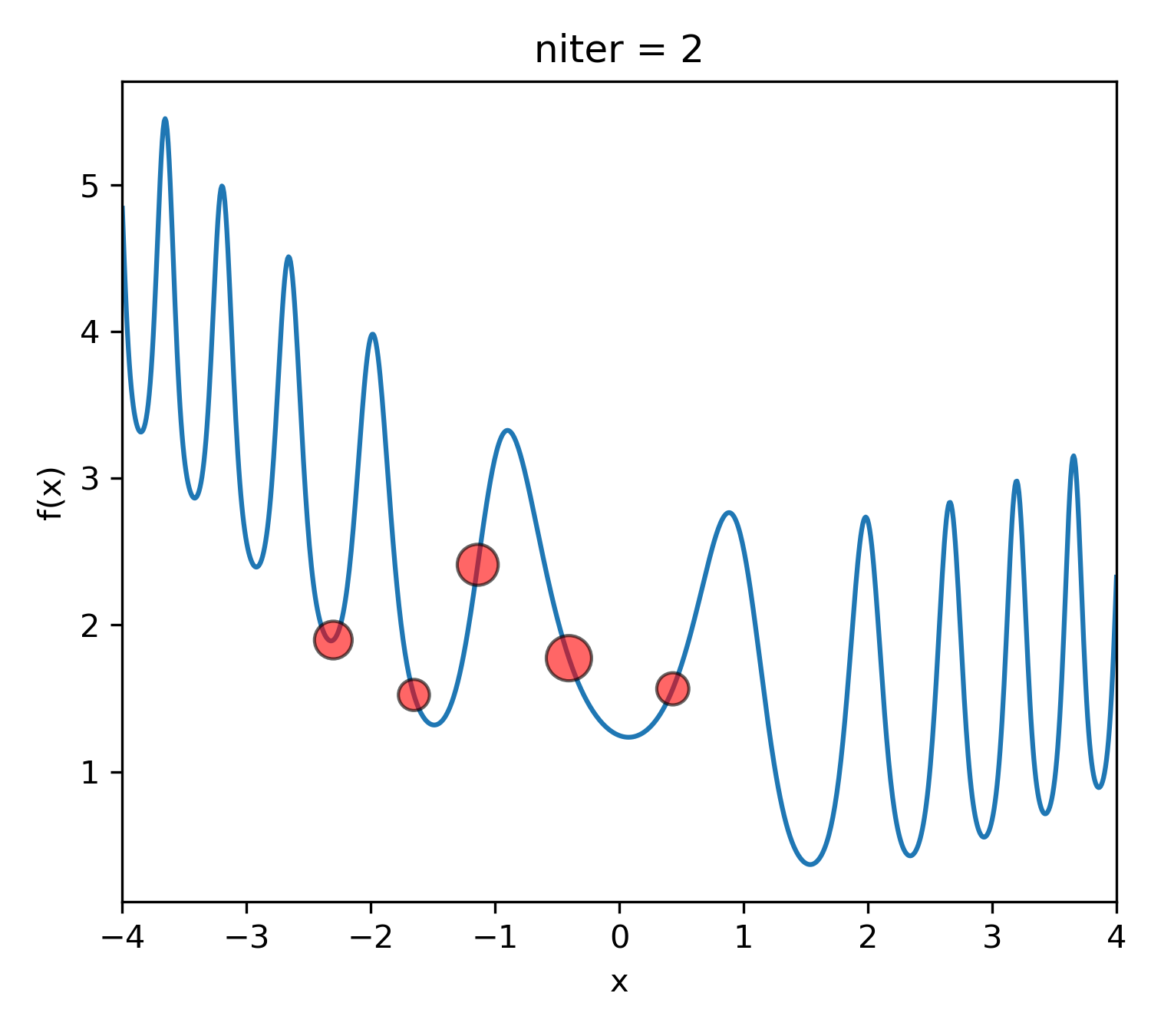}
		\includegraphics[width=0.15\textwidth]{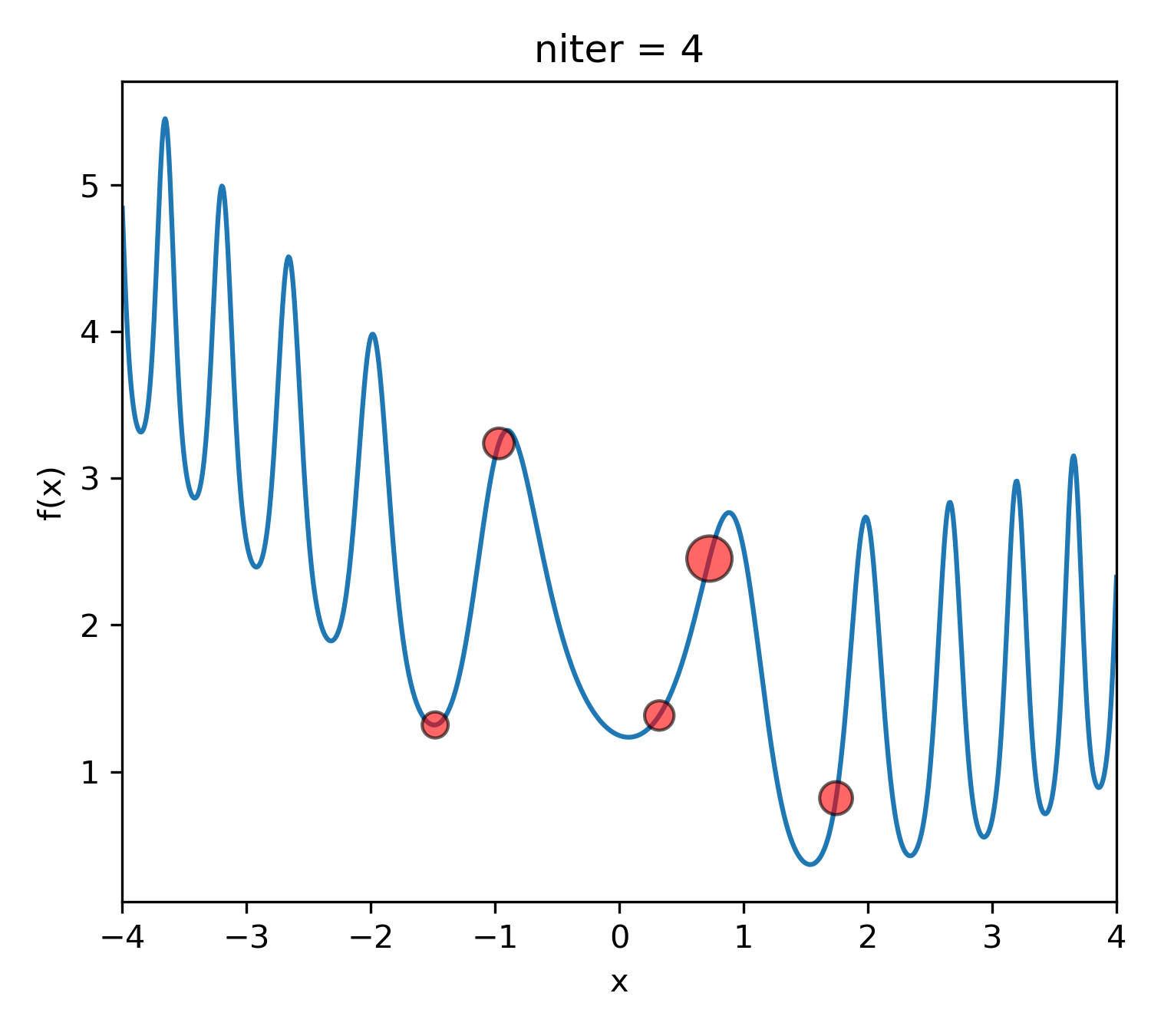}
		\includegraphics[width=0.15\textwidth]{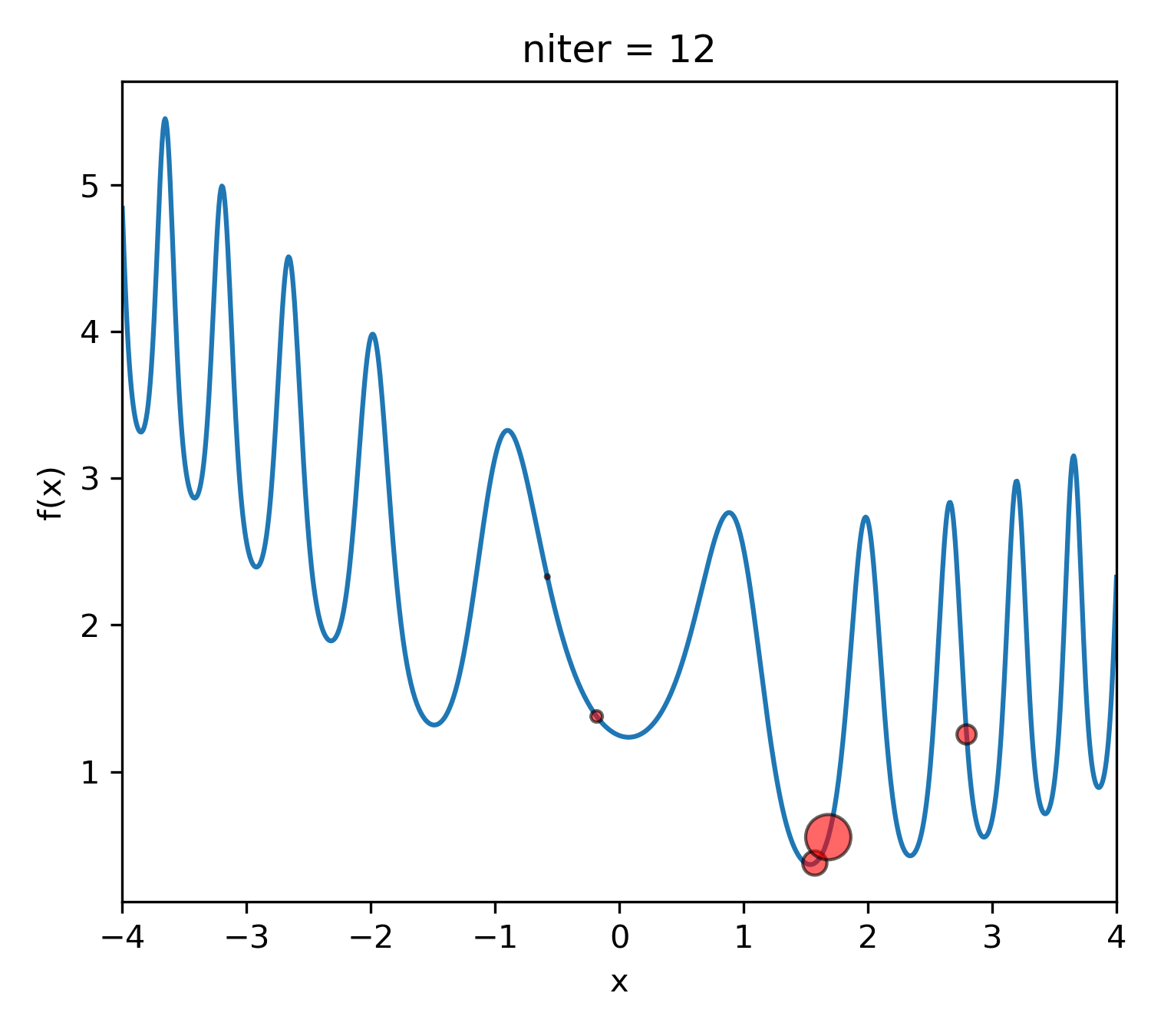}
		\includegraphics[width=0.15\textwidth]{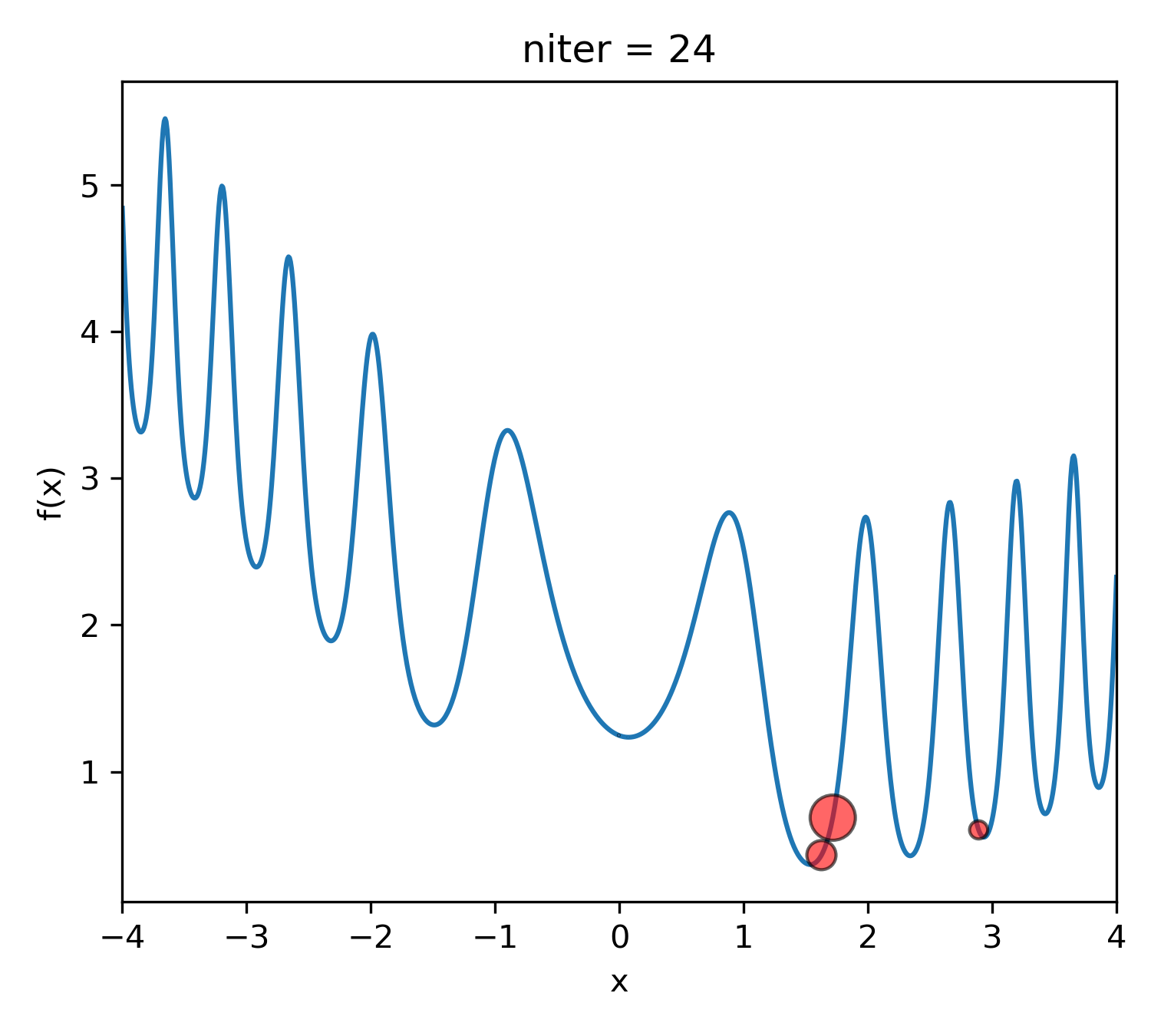}
		\includegraphics[width=0.15\textwidth]{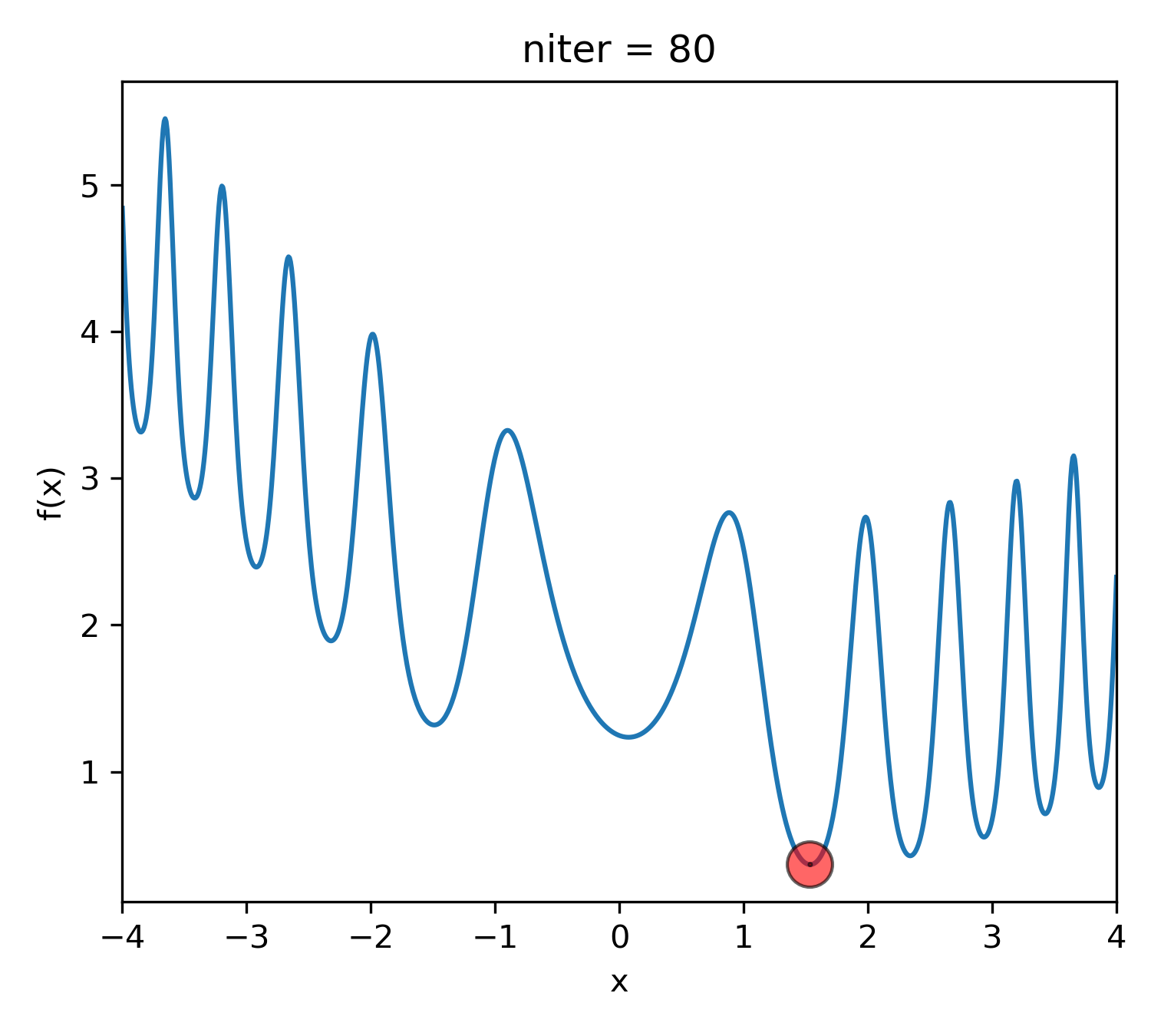}
	\end{center}
	\caption{Movement of agents with initial position $x = {\rm random}(-4, -2)$, velocity $v = {\rm random} (1, 5)$, $w_i = 10^{-4}$, $R = 1$, where the merging and removal strategy are implemented.}\label{fig:goodv_goodw}
\end{figure}

\begin{figure}[H]
	\begin{center}
		\includegraphics[width=0.15\textwidth]{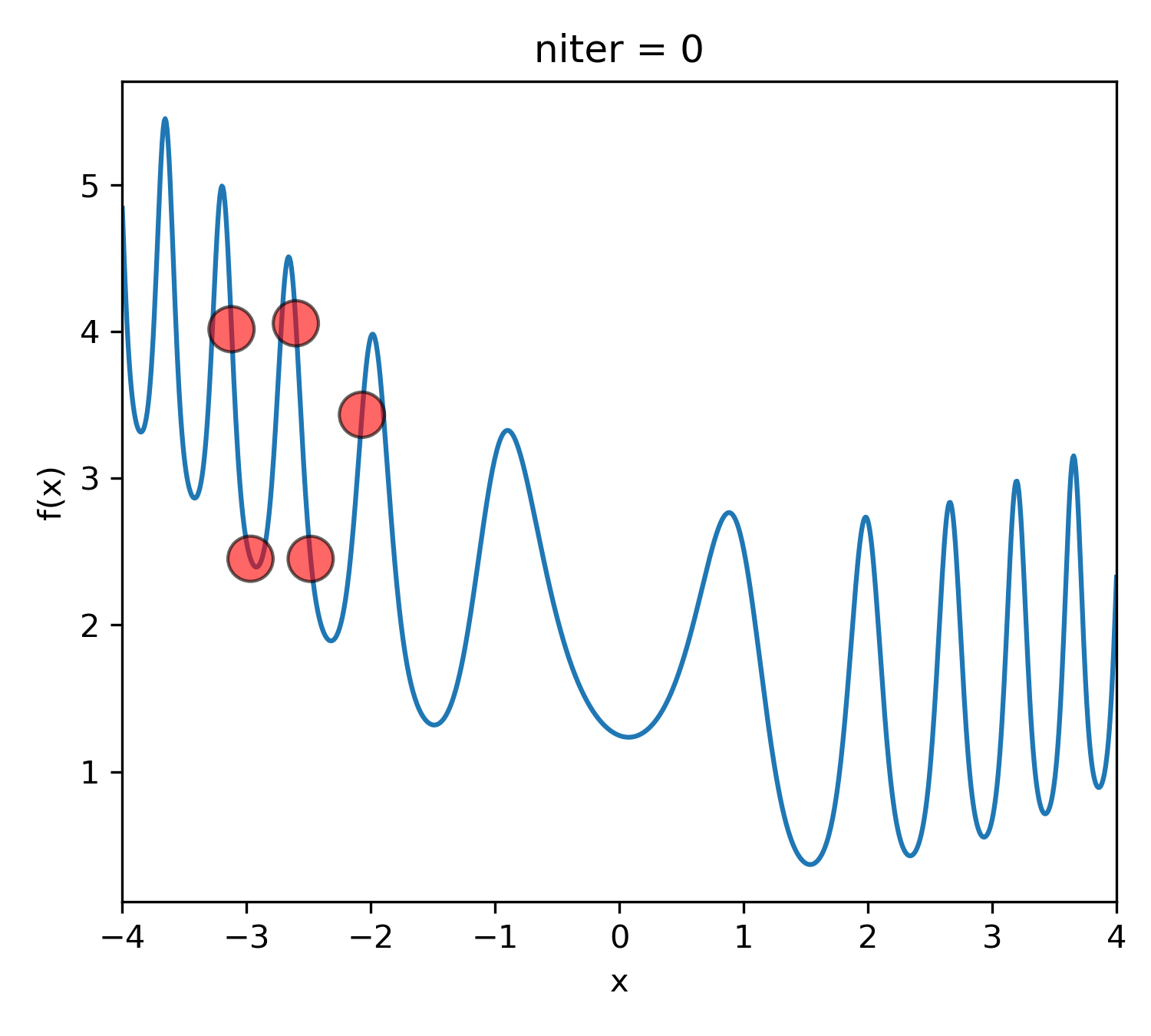}
		\includegraphics[width=0.15\textwidth]{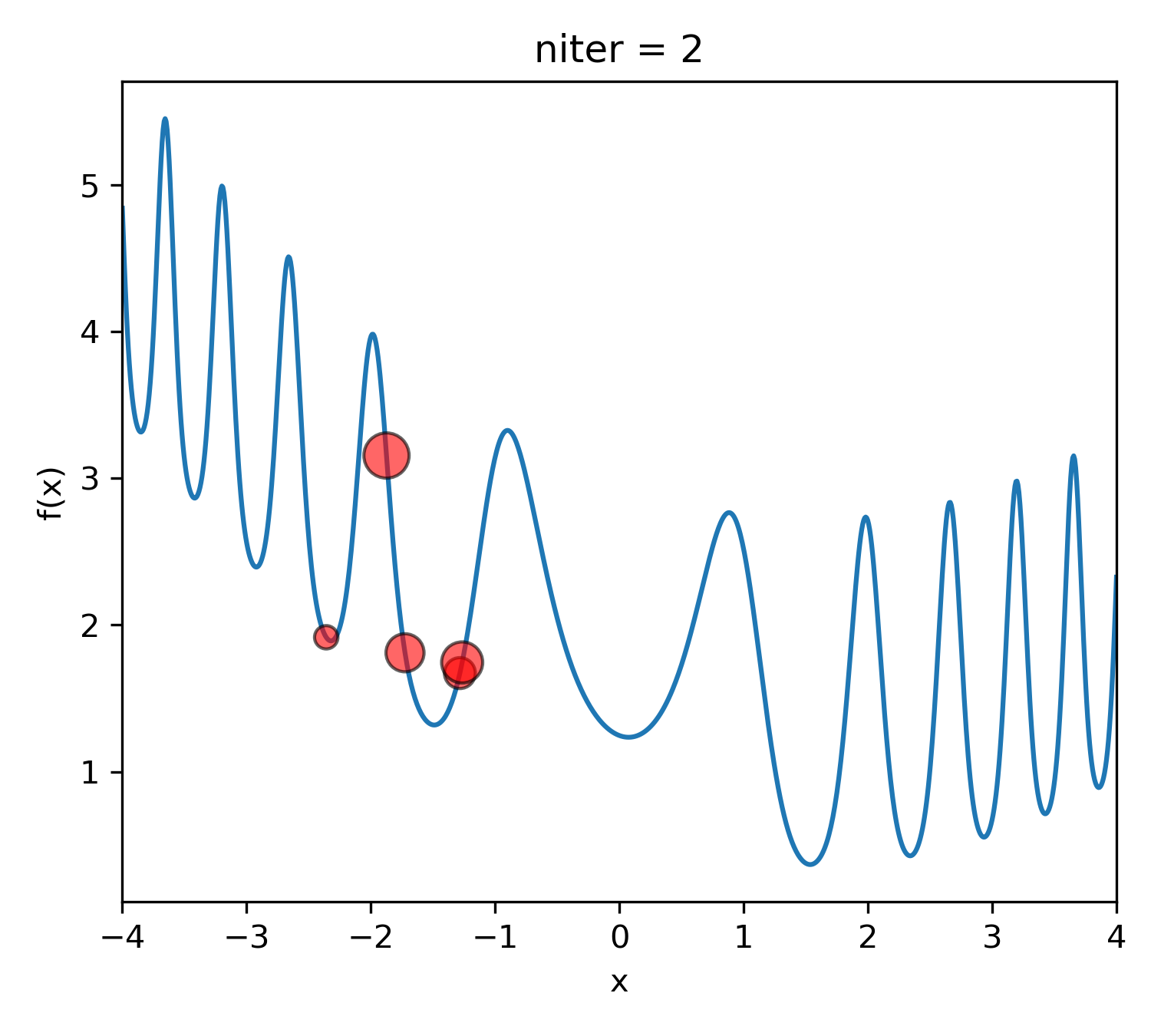}
		\includegraphics[width=0.15\textwidth]{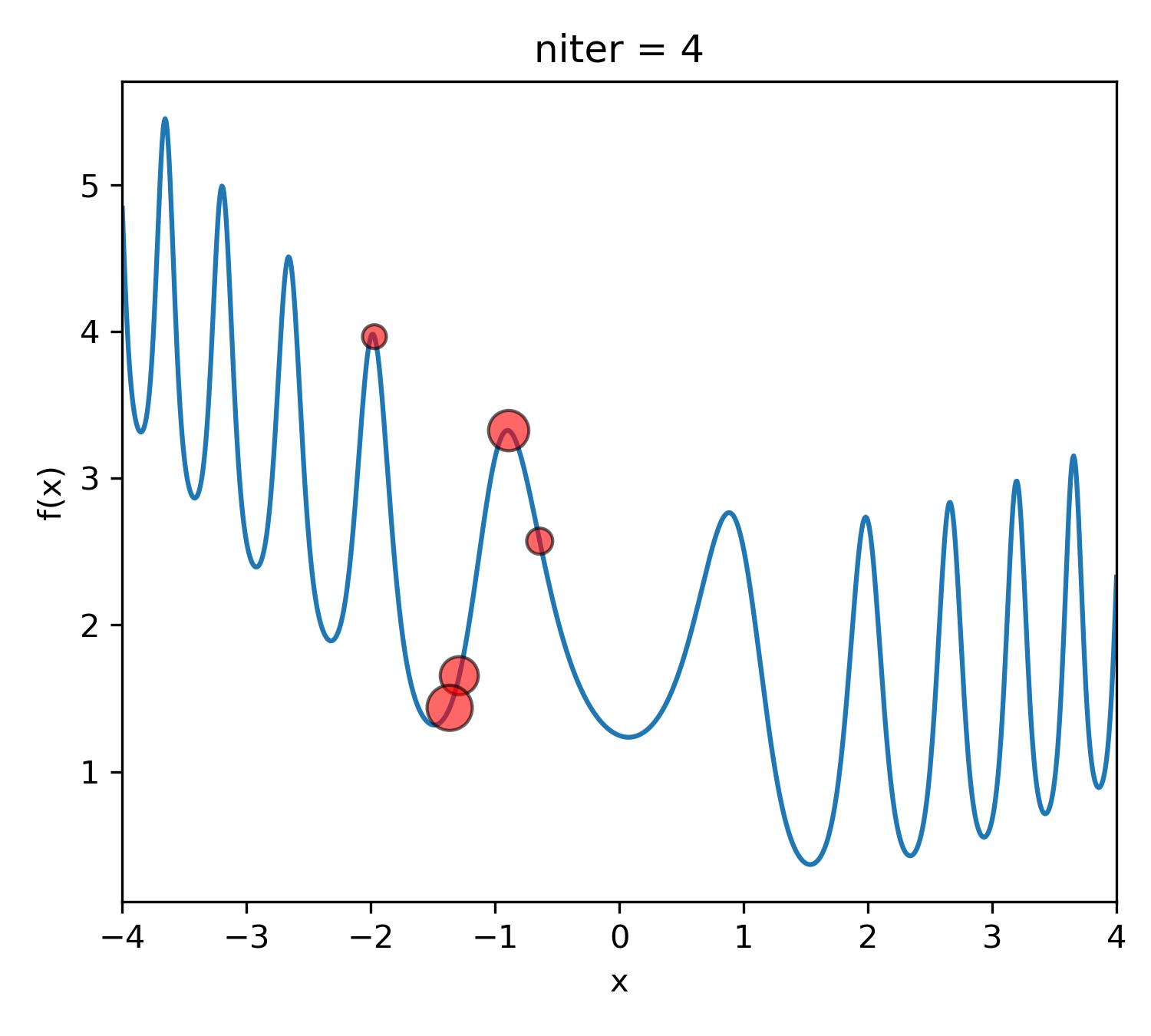}
		\includegraphics[width=0.15\textwidth]{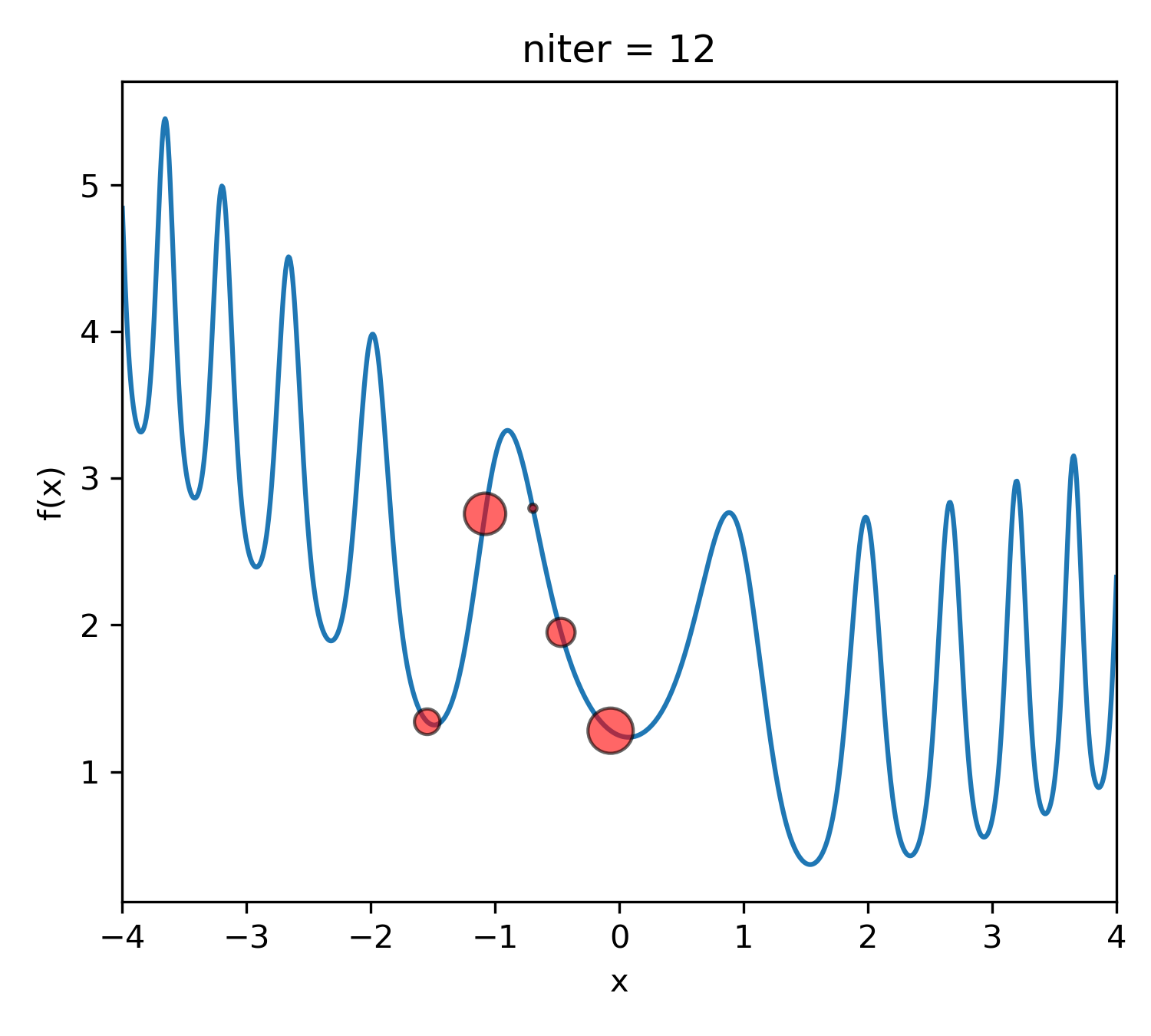}
		\includegraphics[width=0.15\textwidth]{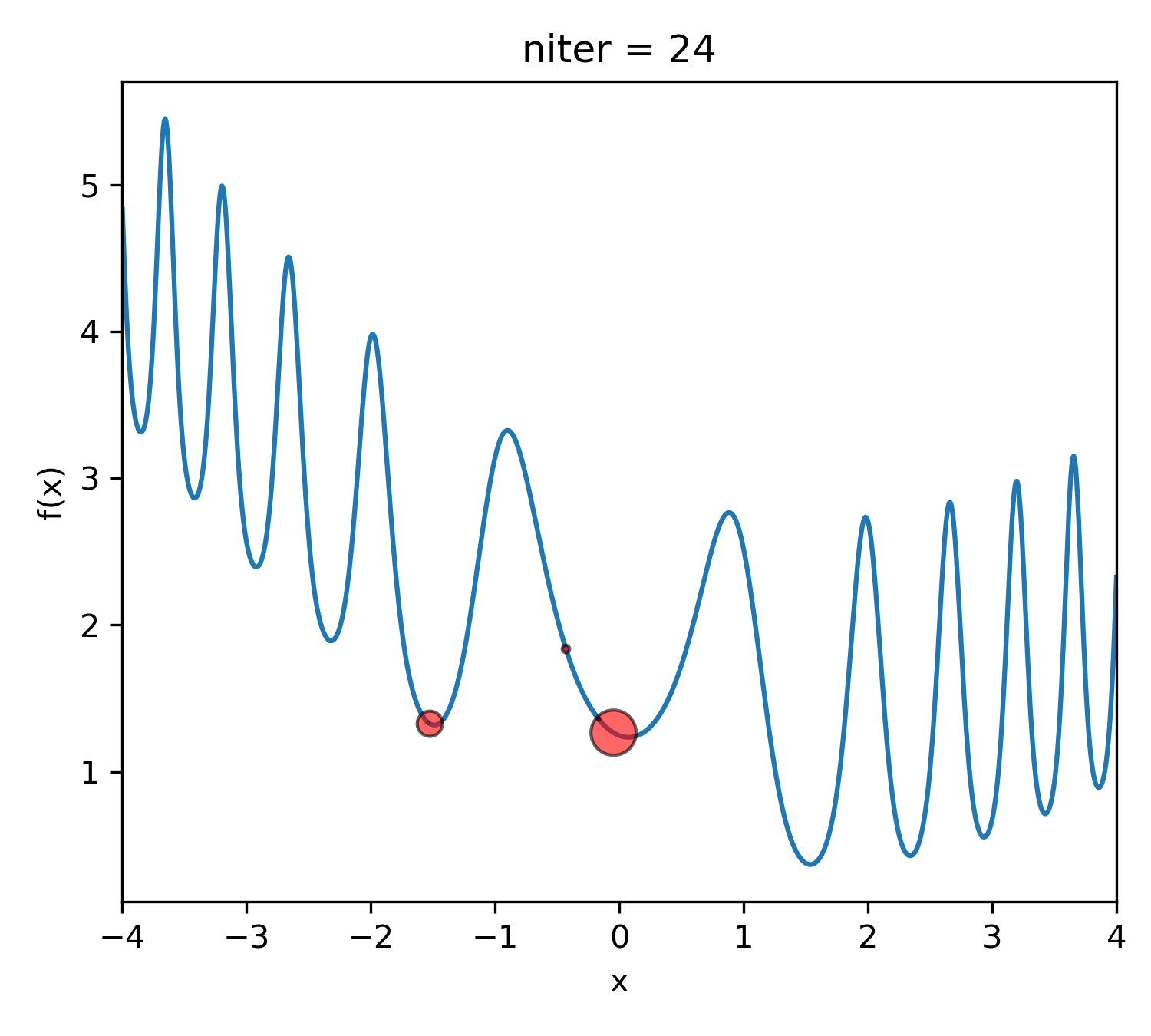}
		\includegraphics[width=0.15\textwidth]{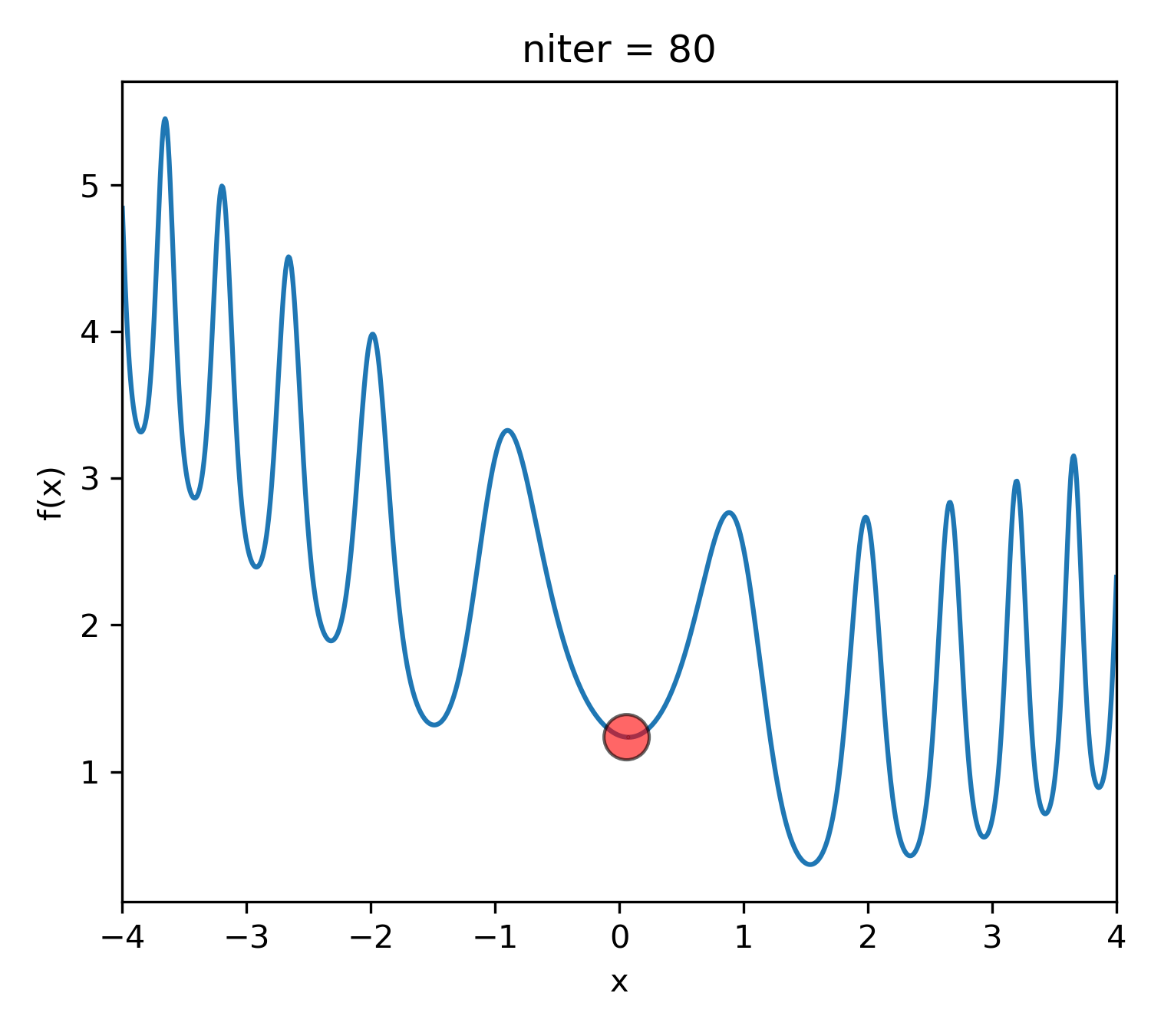}
	\end{center}
	\caption{Movement of agents with initial position $x = {\rm random}(-4, -2)$, velocity $v = {\rm random} (1, 2)$, $w_i = 10^{-4}$, $R = 1$, where the merging and removal strategy are implemented.}\label{fig:smallv_goodw}
\end{figure}

\begin{figure}[H]
	\begin{center}
		\includegraphics[width=0.15\textwidth]{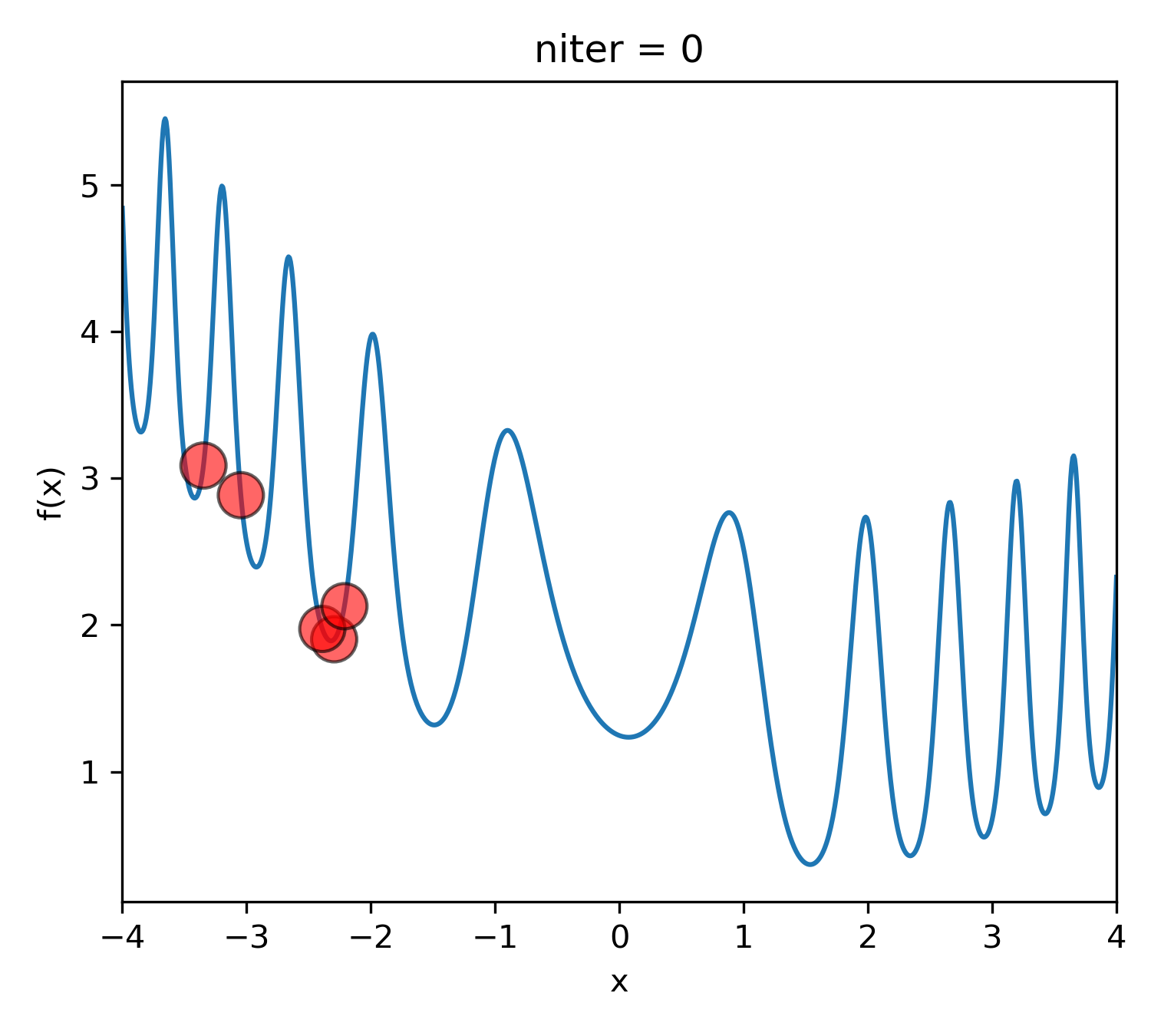}
		\includegraphics[width=0.15\textwidth]{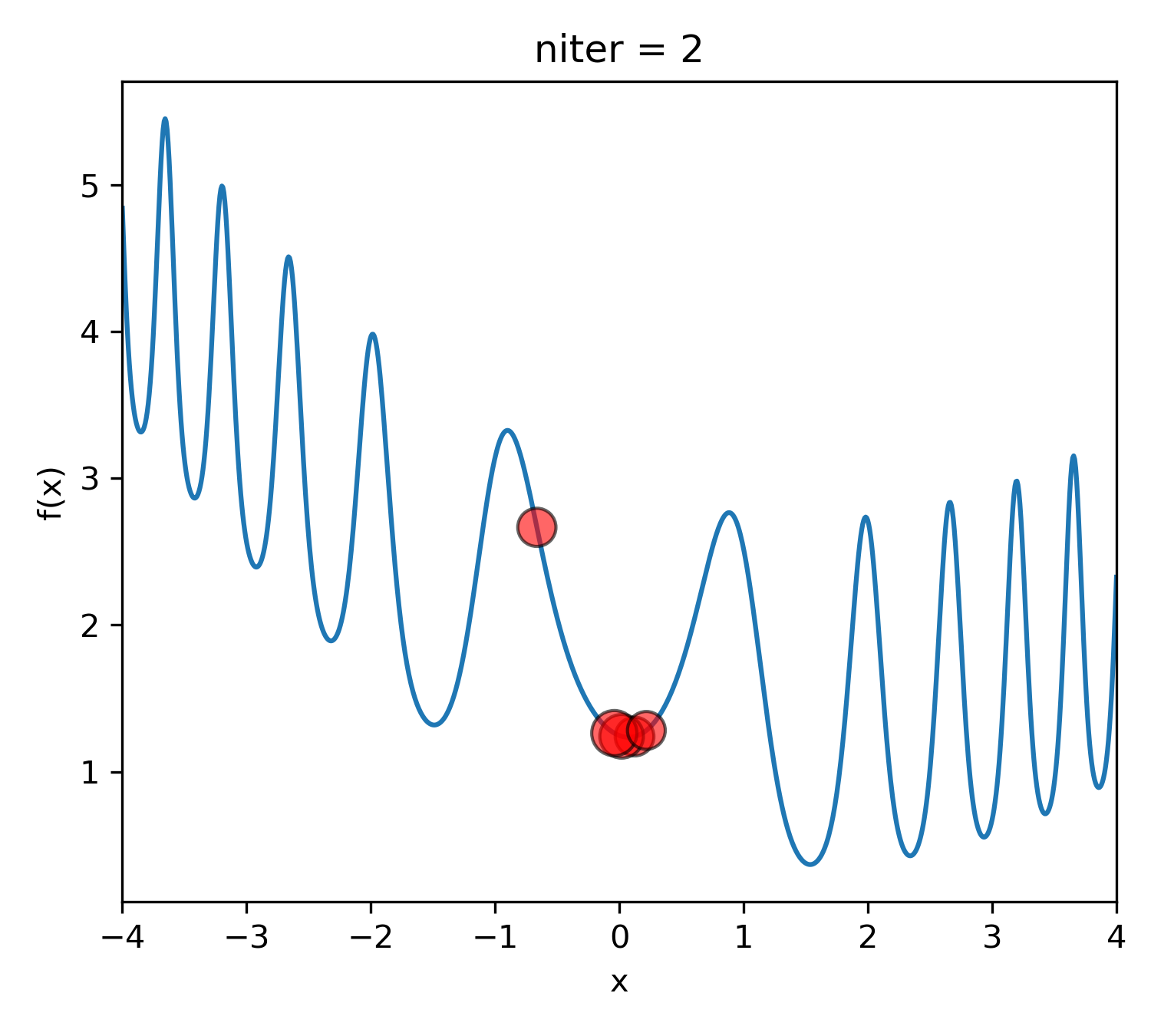}
		\includegraphics[width=0.15\textwidth]{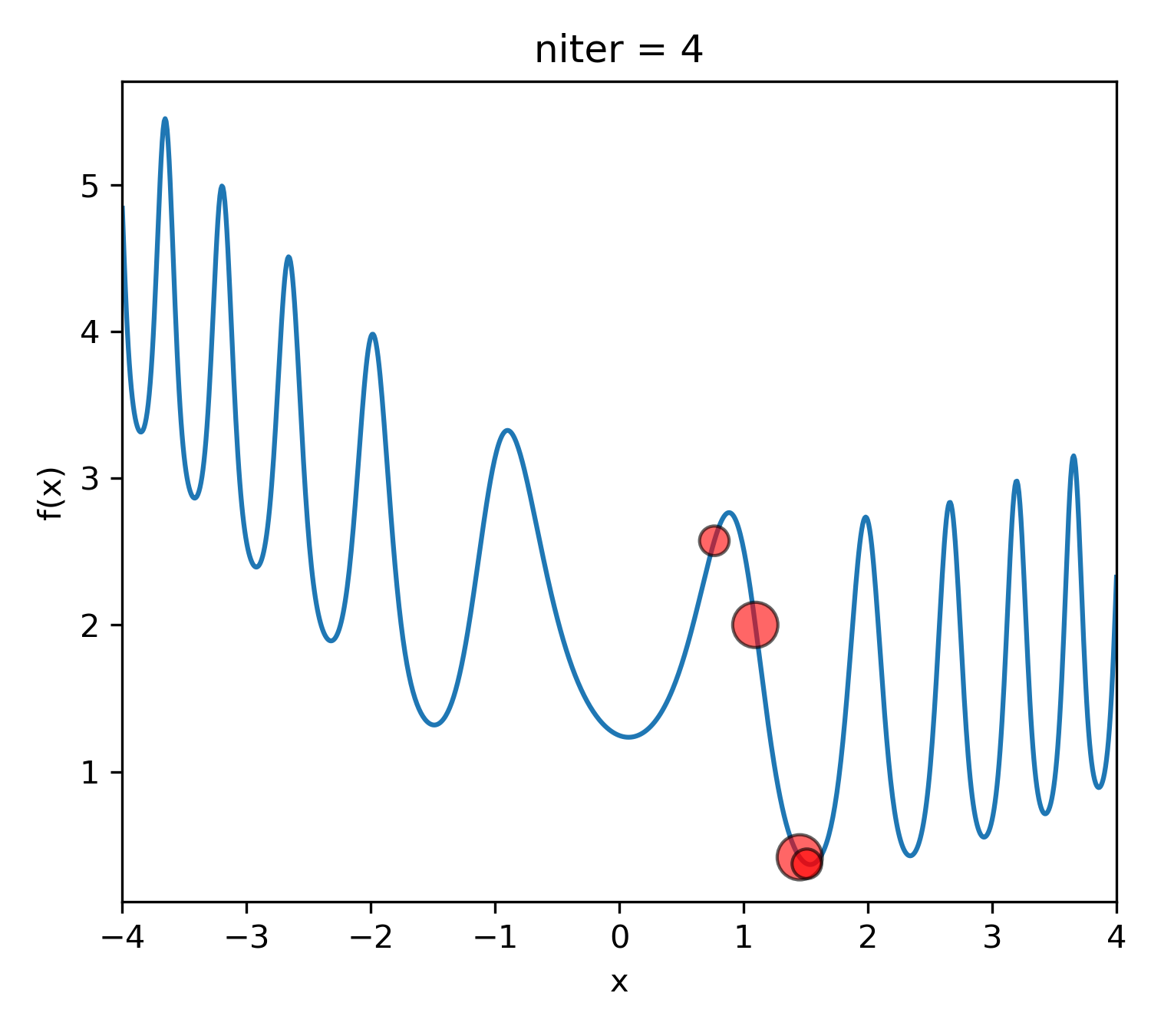}
		\includegraphics[width=0.15\textwidth]{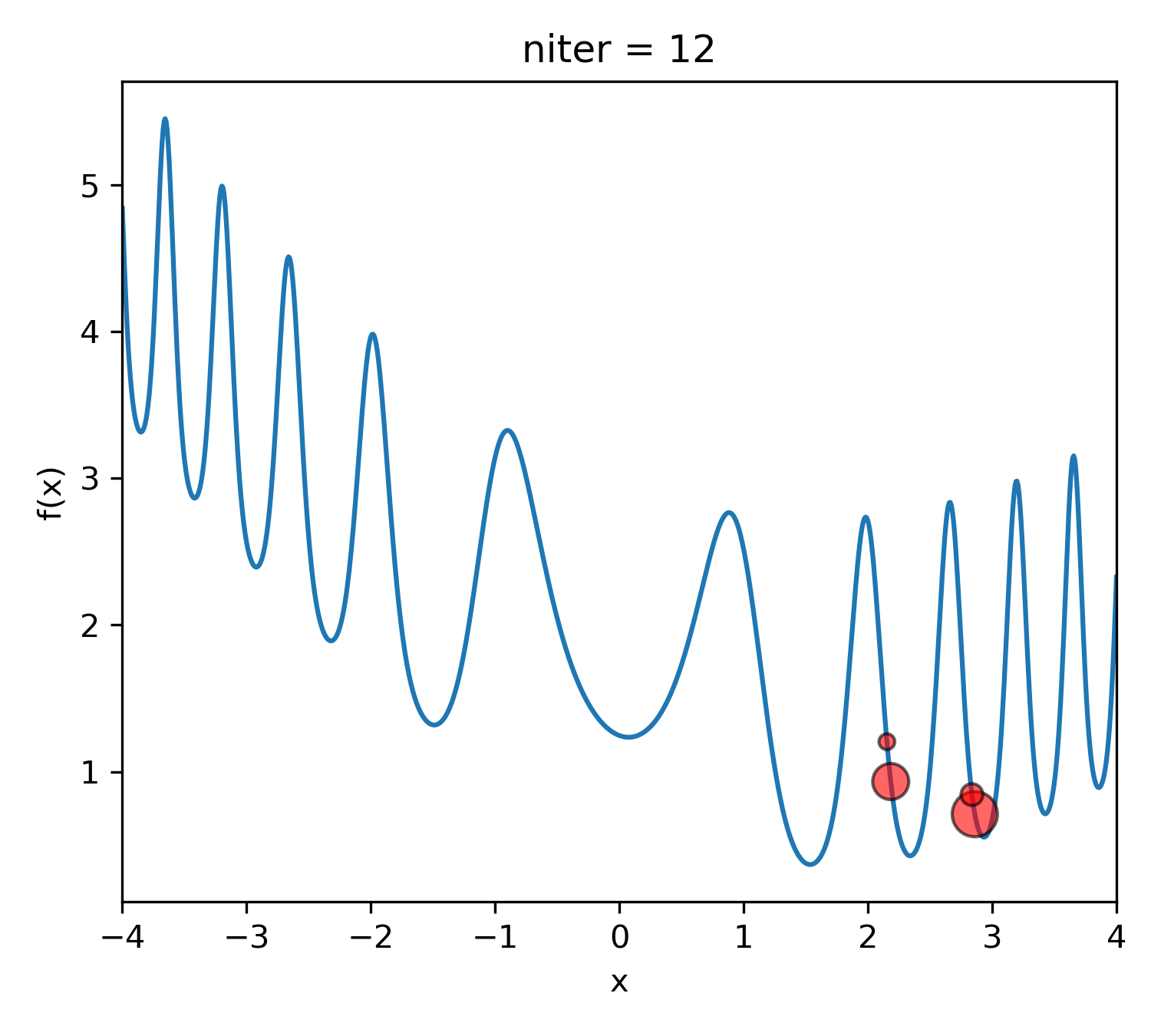}
		\includegraphics[width=0.15\textwidth]{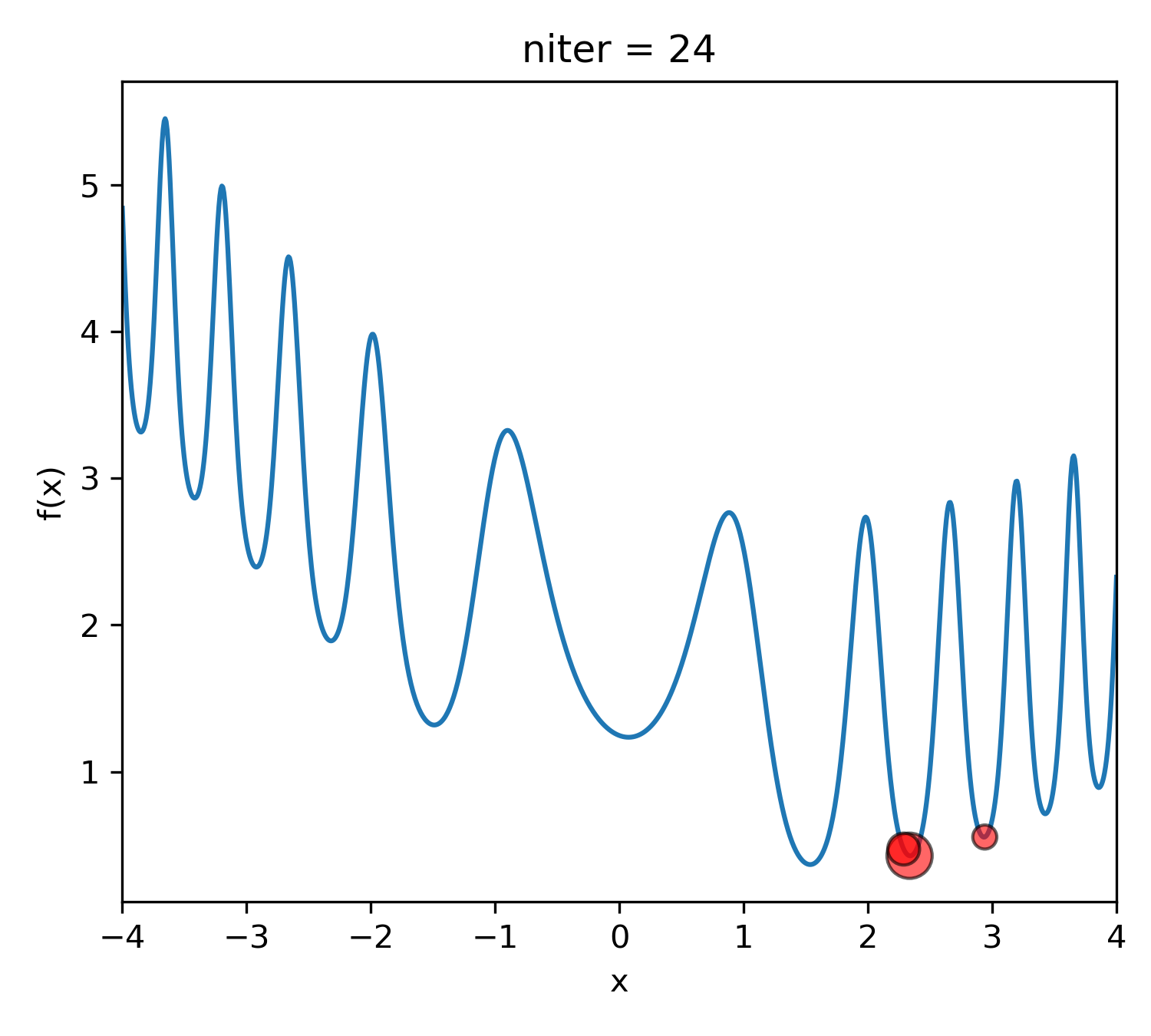}
		\includegraphics[width=0.15\textwidth]{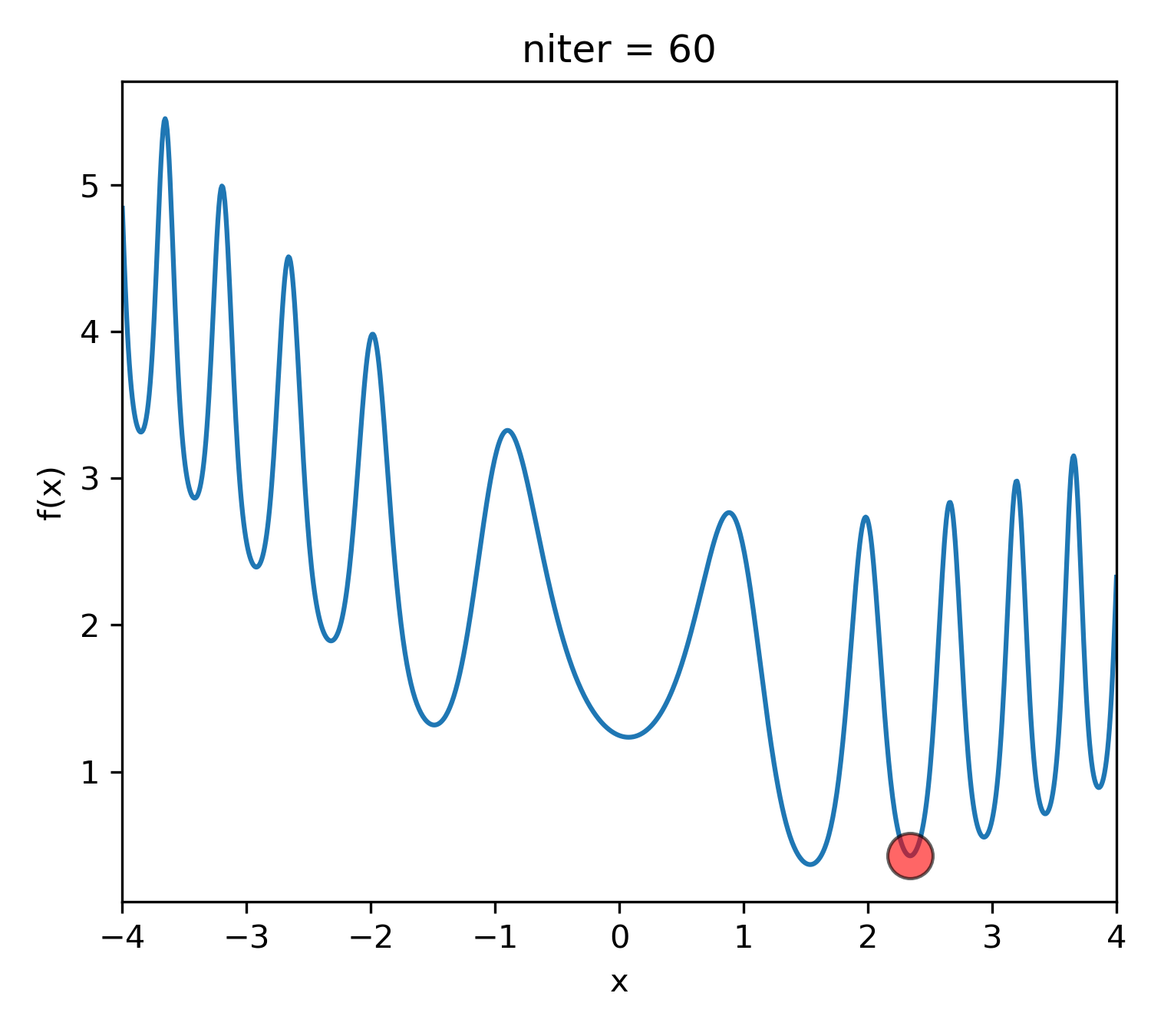}
	\end{center}
	\caption{Movement of agents with initial position $x = {\rm random}(-4, -2)$, velocity $v = {\rm random} (4, 5)$, $w_i = 10^{-4}$, $R = 1$, where the merging and removal strategy are implemented..}\label{fig:largev_goodw}
\end{figure}

\begin{figure}[H]
	\begin{center}
		\includegraphics[width=0.15\textwidth]{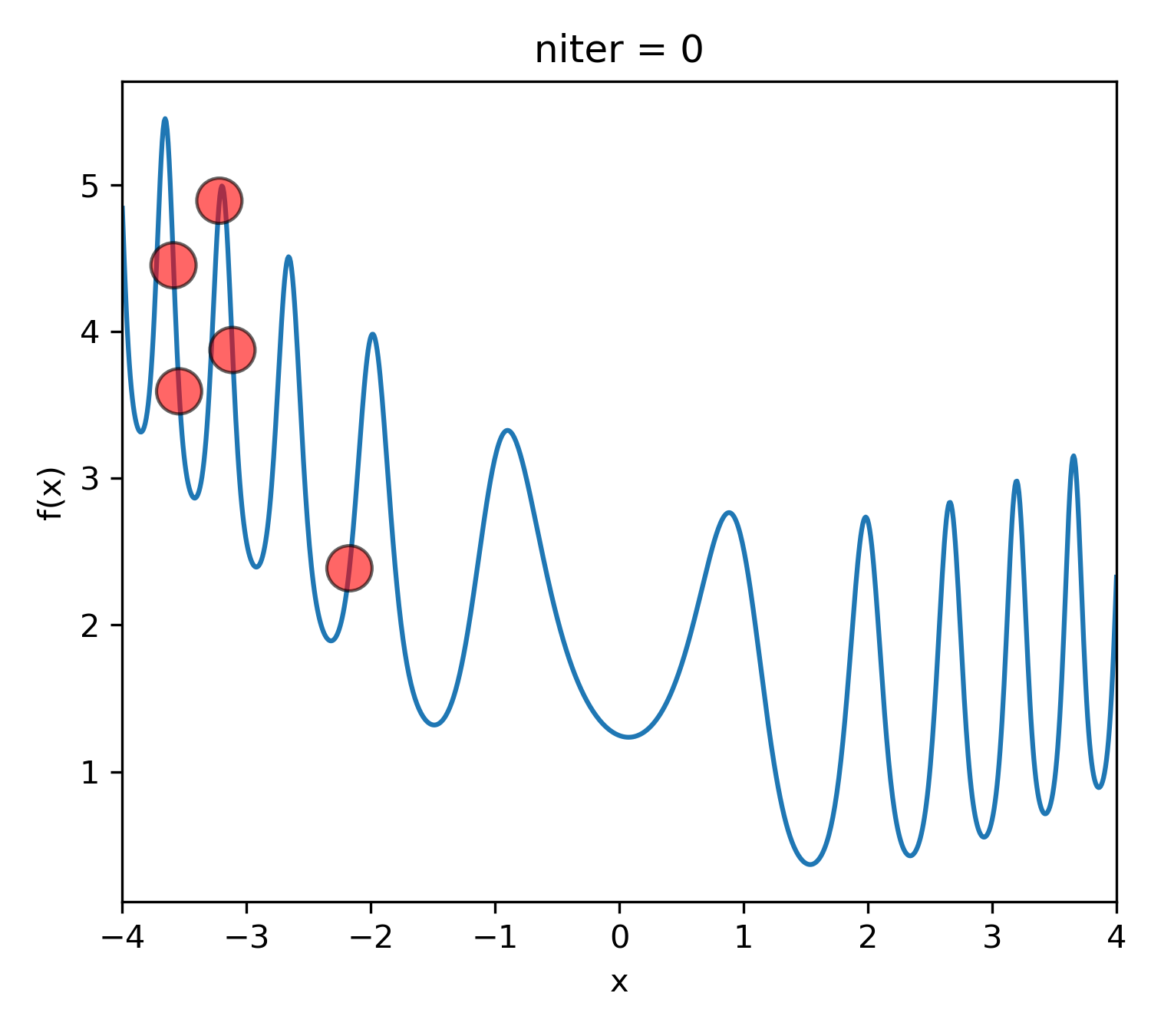}
		\includegraphics[width=0.15\textwidth]{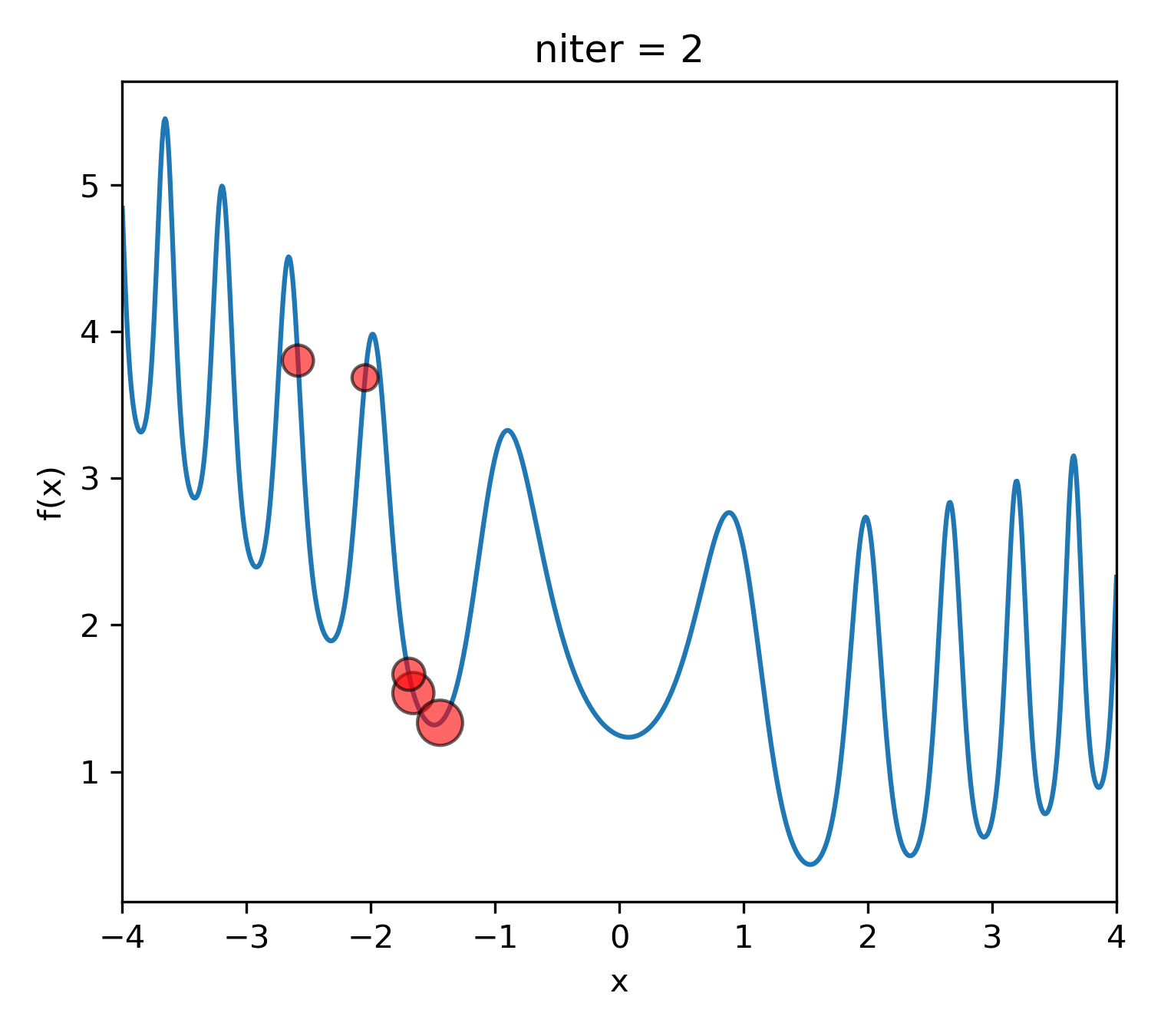}
		\includegraphics[width=0.15\textwidth]{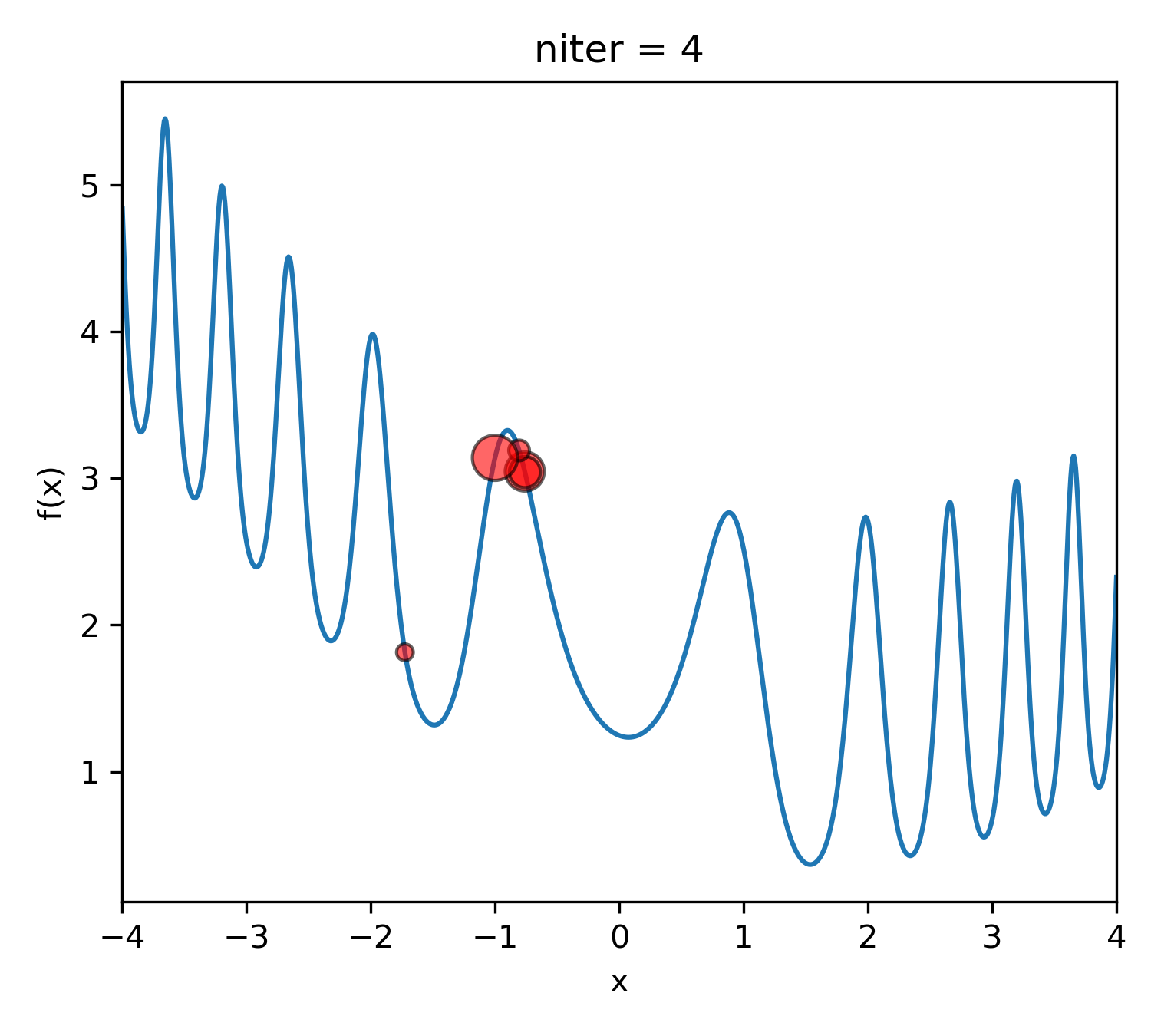}
		\includegraphics[width=0.15\textwidth]{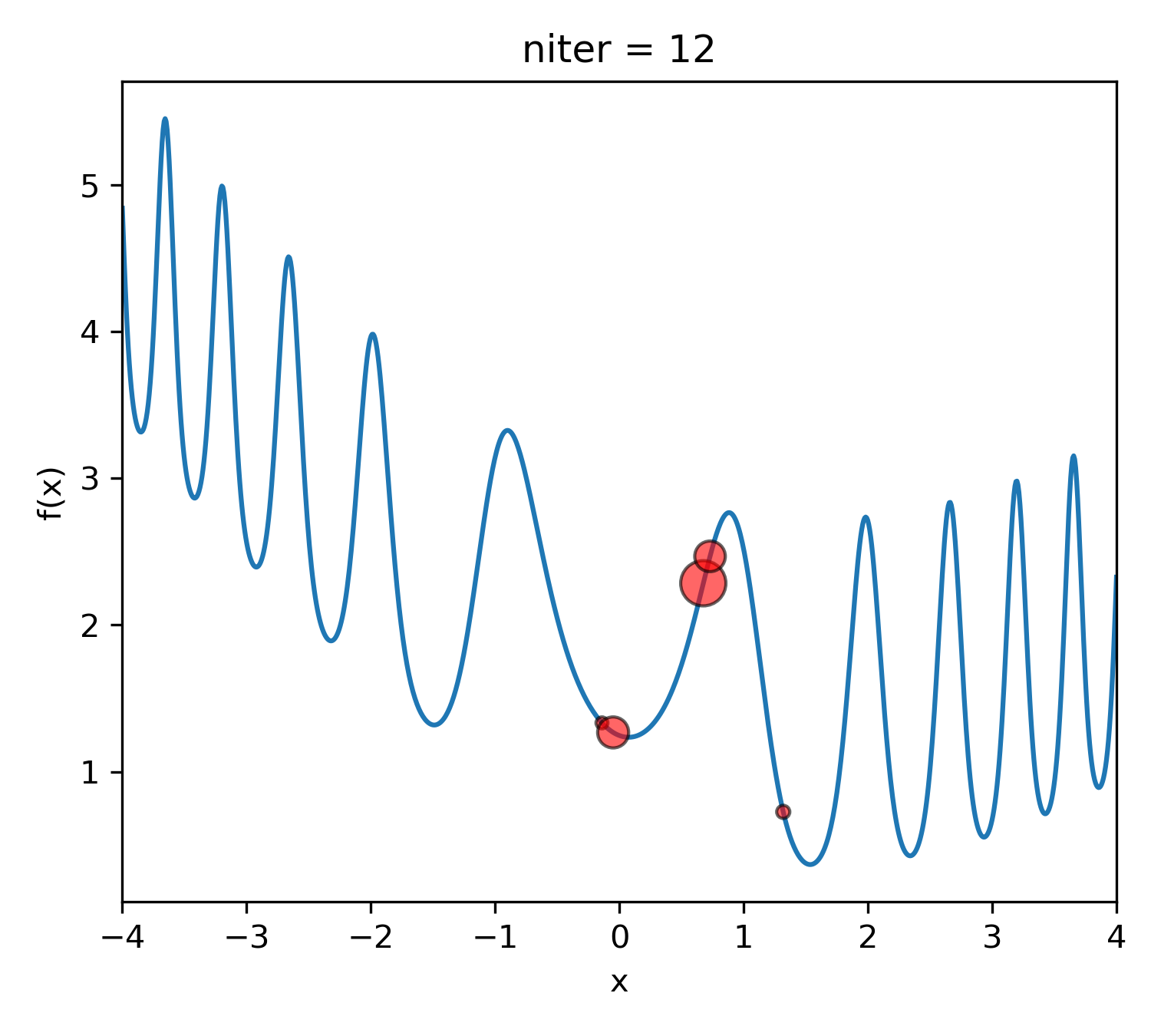}
		\includegraphics[width=0.15\textwidth]{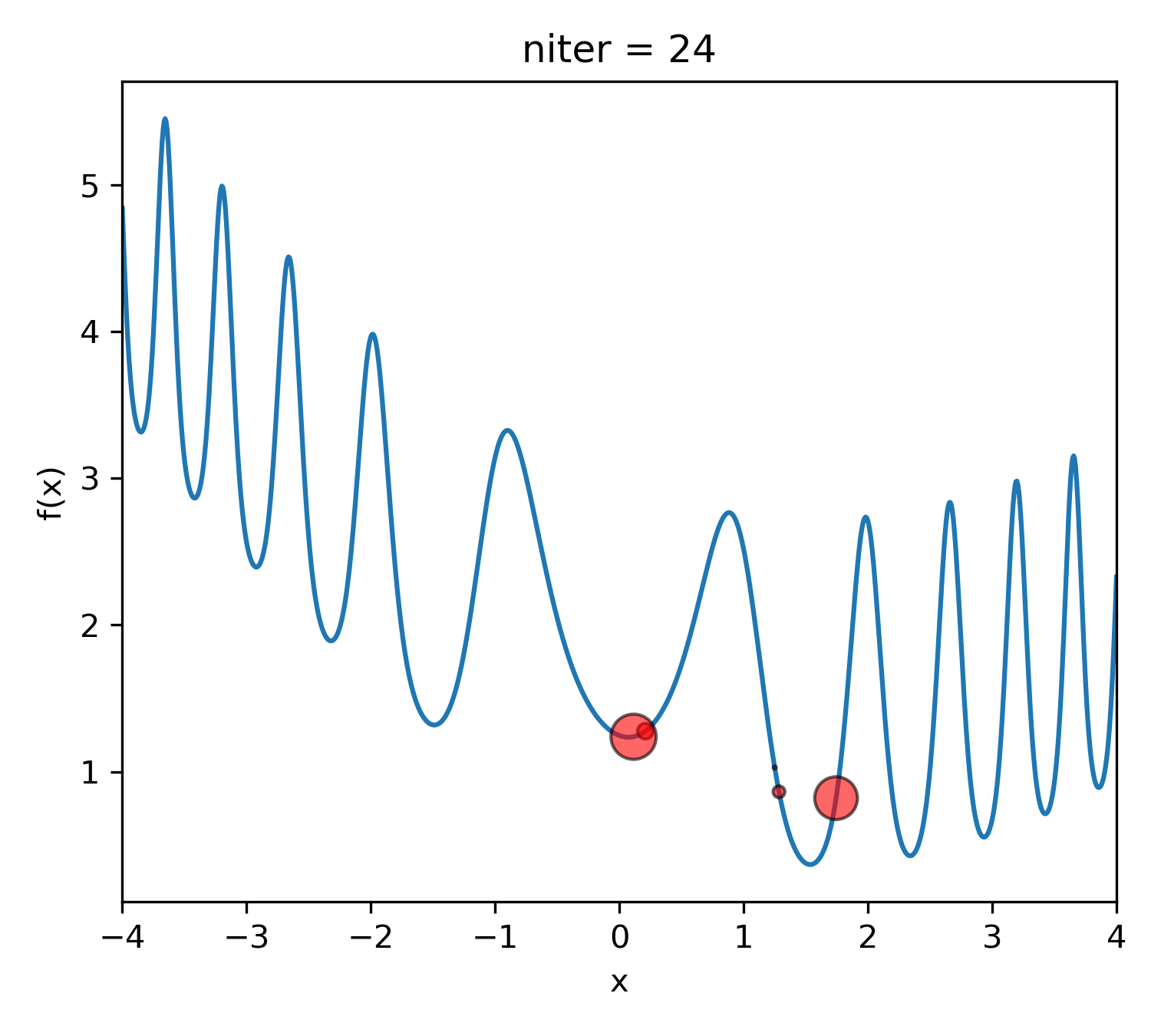}
		\includegraphics[width=0.15\textwidth]{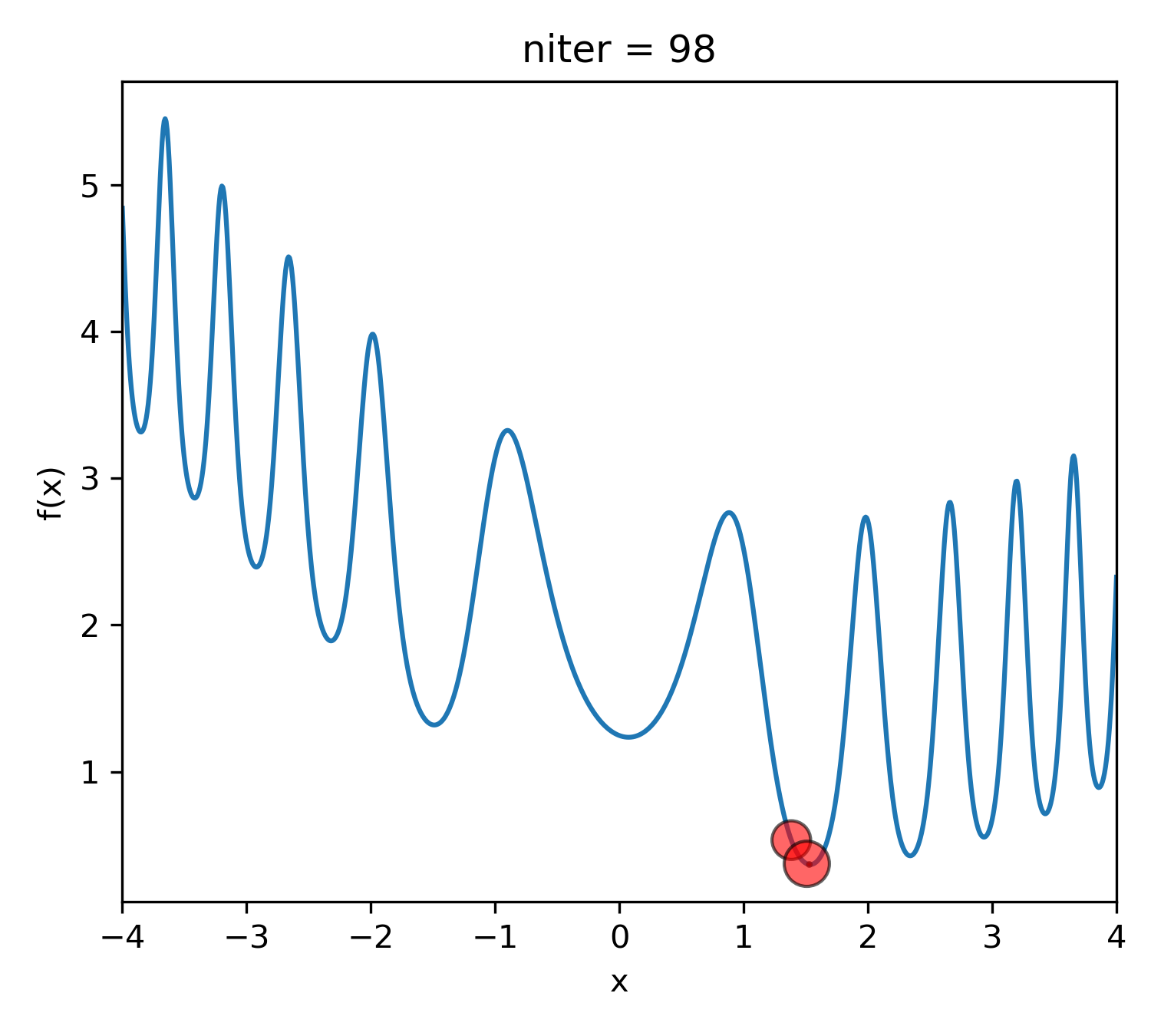}
	\end{center}
	\caption{Movement of agents with initial position $x = {\rm random}(-4, -2)$, velocity $v = {\rm random} (1, 2)$, $w_i = 10^{-5}$, $R = 0.6$, where the merging and removal strategy are implemented..}\label{fig:smallv_smallw}
\end{figure}

\begin{figure}[H]
	\begin{center}
		\includegraphics[width=0.15\textwidth]{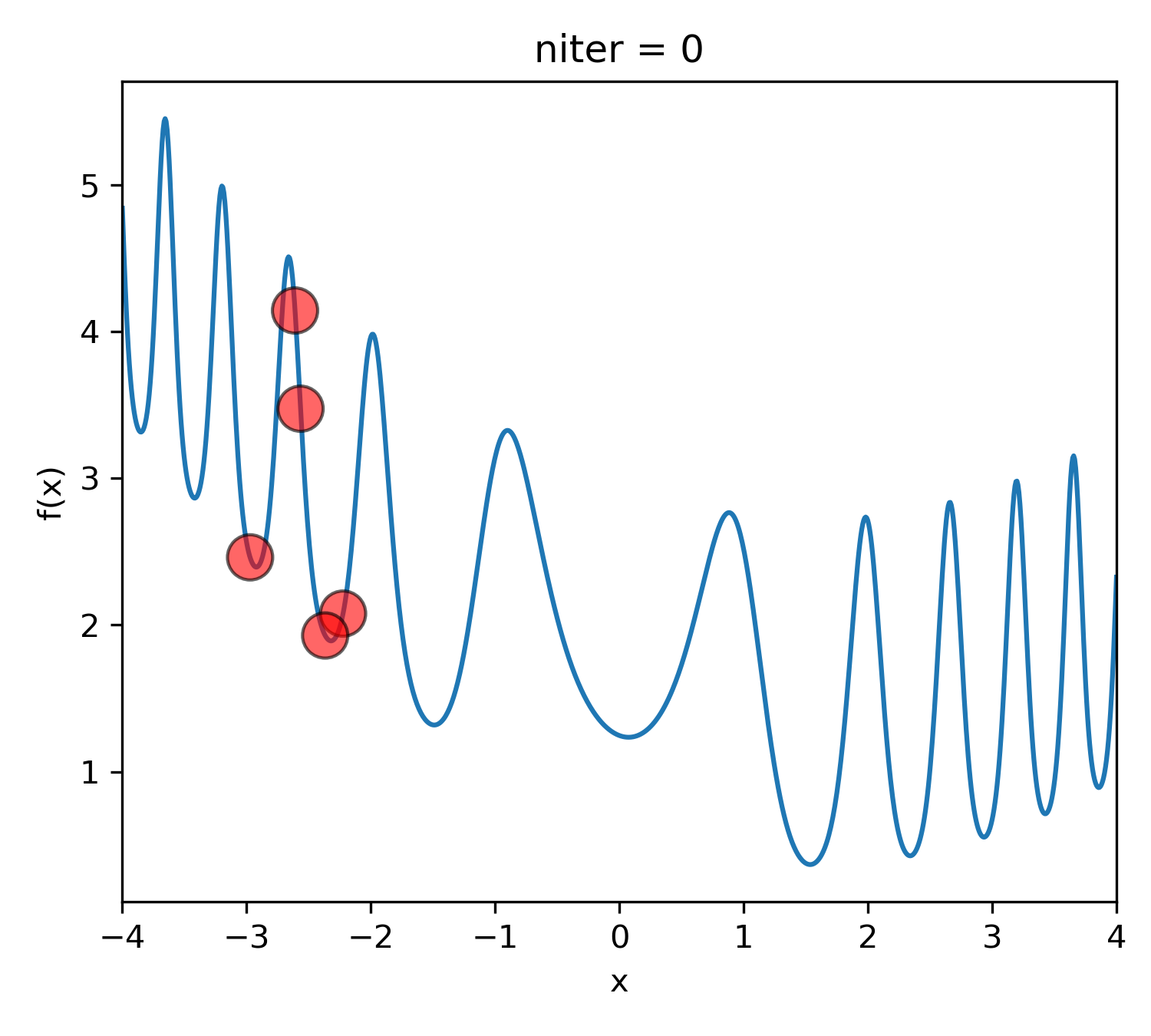}
		\includegraphics[width=0.15\textwidth]{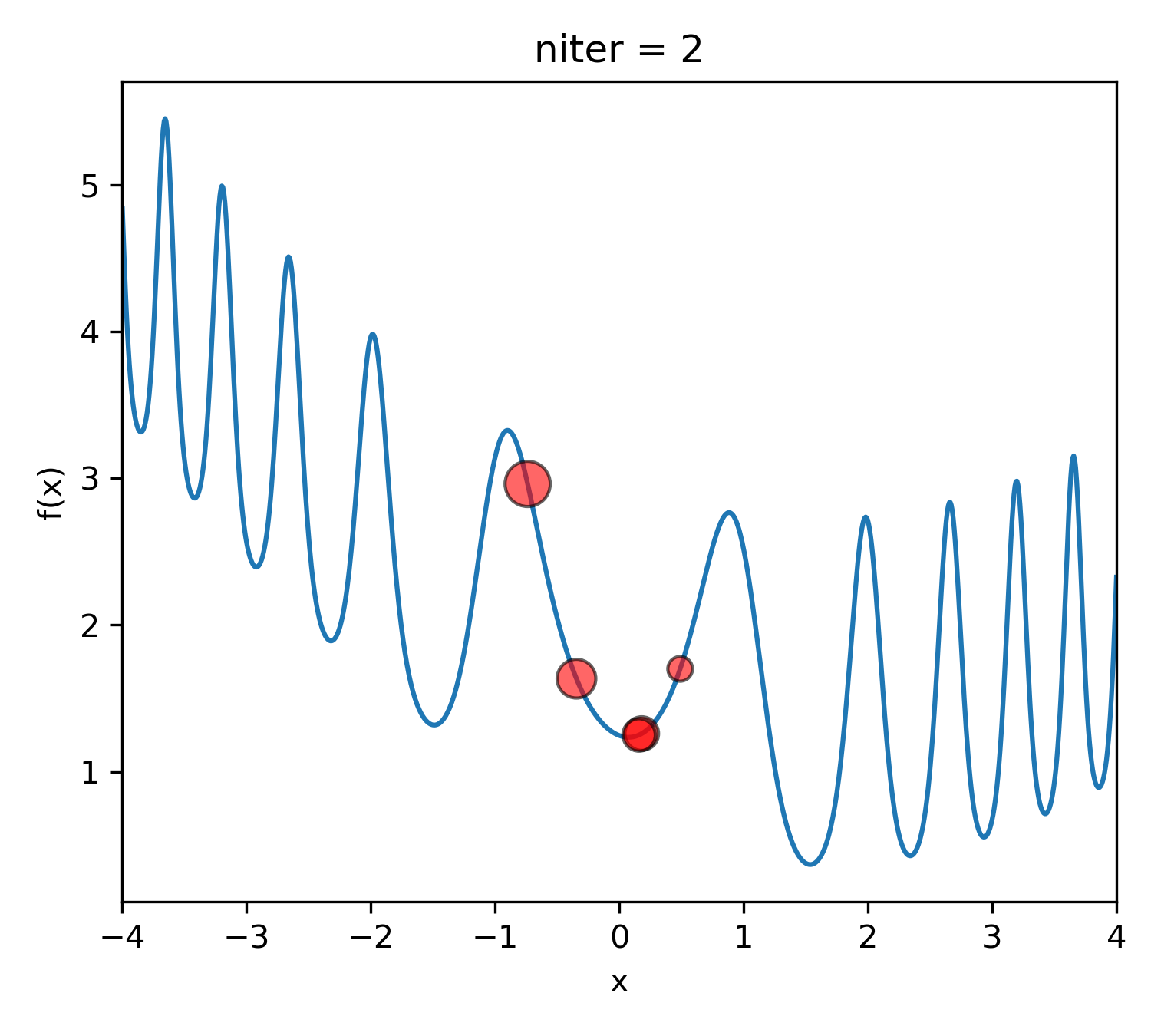}
		\includegraphics[width=0.15\textwidth]{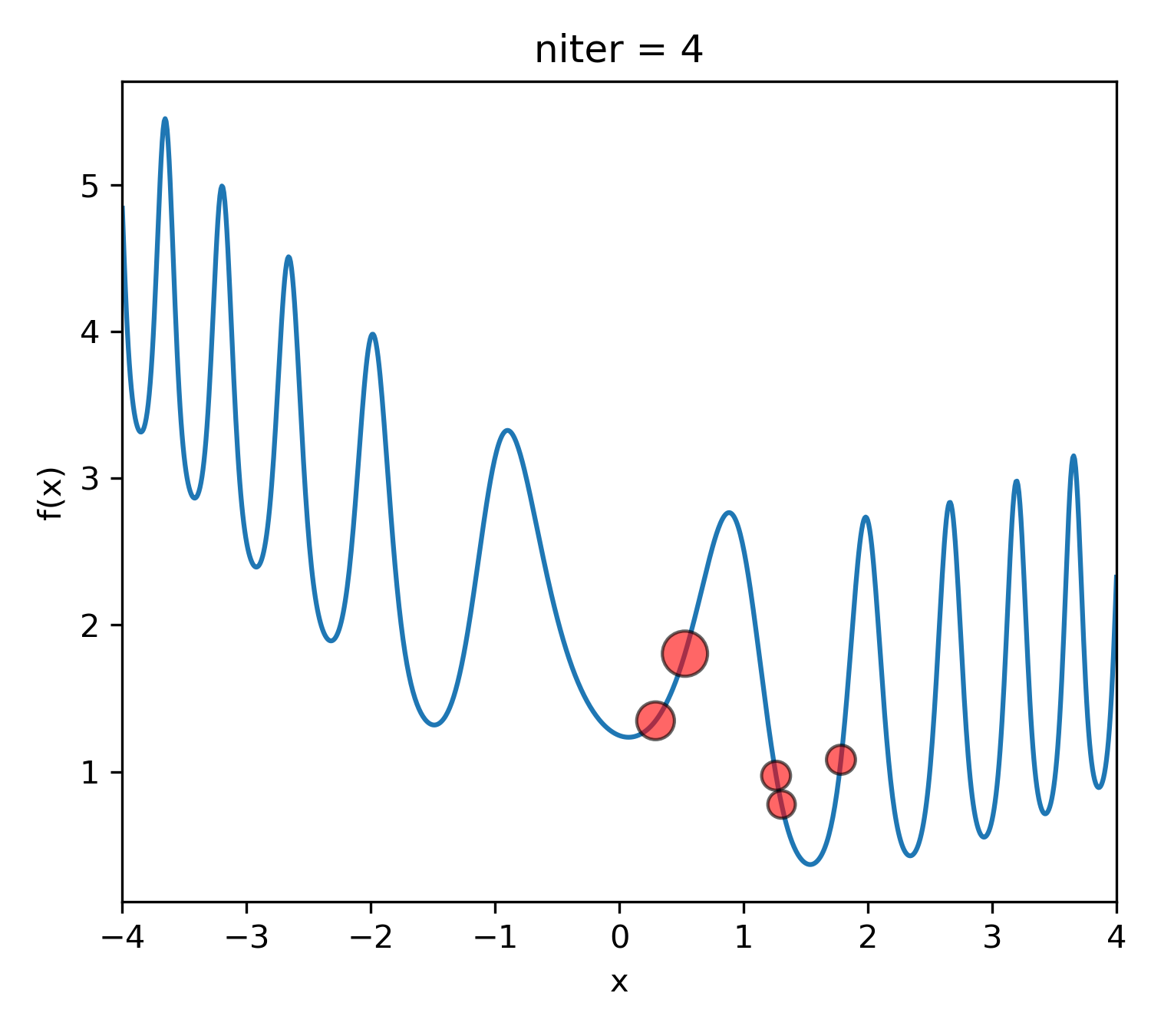}
		\includegraphics[width=0.15\textwidth]{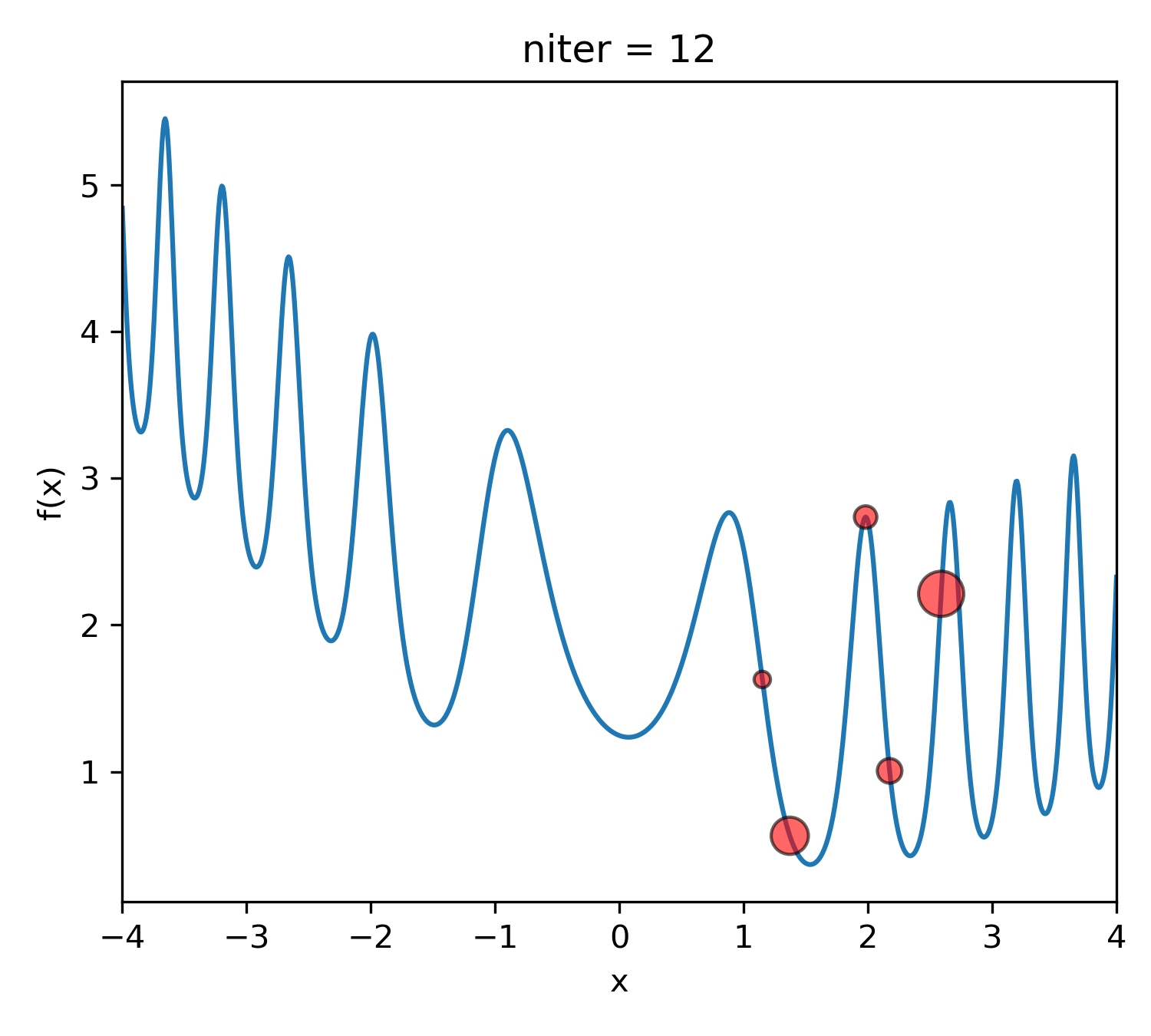}
		\includegraphics[width=0.15\textwidth]{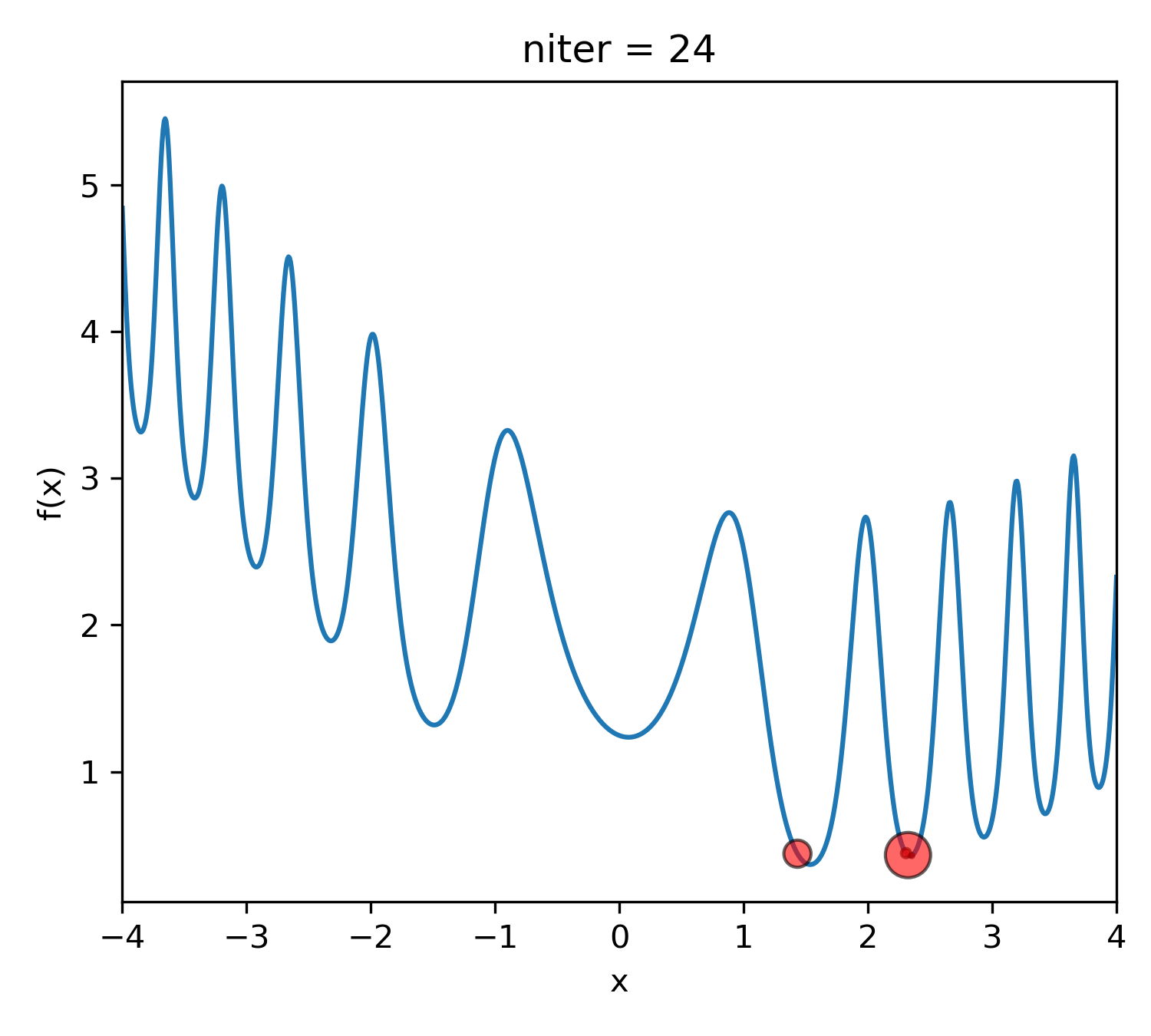}
		\includegraphics[width=0.15\textwidth]{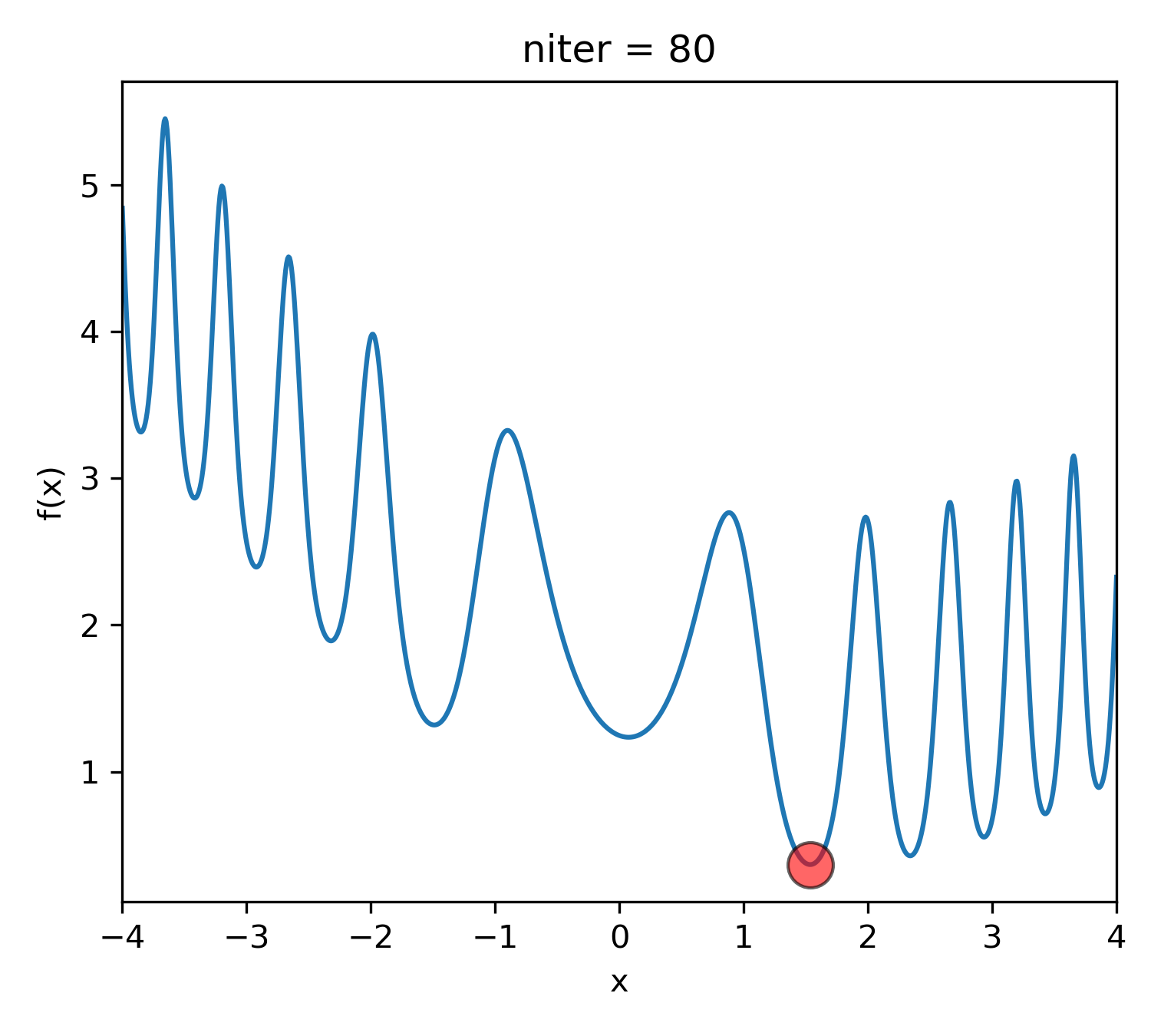}
	\end{center}
	\caption{Movement of agents with initial position $x = {\rm random}(-4, -2)$, velocity $v = {\rm random} (4, 5)$, $w_i = 10^{-3}$, $R = 1.2$, where the merging and removal strategy are implemented..}\label{fig:largev_largew}
\end{figure}

Figures \ref{fig:goodv_goodw}, \ref{fig:smallv_goodw}, and \ref{fig:largev_goodw} illustrate the trajectories of agents under various initial velocity conditions. In Figure \ref{fig:goodv_goodw}, the global minimum is successfully reached. Conversely, Figure \ref{fig:smallv_goodw} demonstrates that when the initial speed is too low, the agents exhaust their momentum before reaching the global minimum, leading to a failure in searching for the optimum. For this scenario, reducing $w_i$ and the friction coefficient $R$ allows the agents to preserve sufficient momentum to eventually find the global minimum, shown in \ref{fig:smallv_smallw}. In contrast, Figure \ref{fig:largev_goodw} reveals that an excessively high initial speed causes the agents to overshoot the target region. In such cases, increasing $w_i$ and the friction coefficient facilitates a more rapid dissipation of the total energy, thereby enhancing the chance of convergence toward the global minimum shown in \ref{fig:largev_largew}. As a side note, these parameters can be made time-dependent in the model without affecting the theoretical results alluded to earlier. As a result, one can devise a strategy to adjust their sizes during iterations.

\subsection{Optimization of a highly oscillatory objective function}

In this section, we apply the swarming algorithms for global optimization to the following highly oscillatory objective function:
\ben
F(x) = x \sin{(x)} \cos{(2x)} - 2 x \sin{(3x)} + 3x \sin{(4x)} + 0.1 x^2. \label{eq:objective-02}
\een
This function has a global minimum at $x^\star \approx 21.5627$, as illustrated in Figure \ref{fig:ex2_solution}.
\begin{figure}[H]
	\begin{center}
		\includegraphics[width=0.3\textwidth]{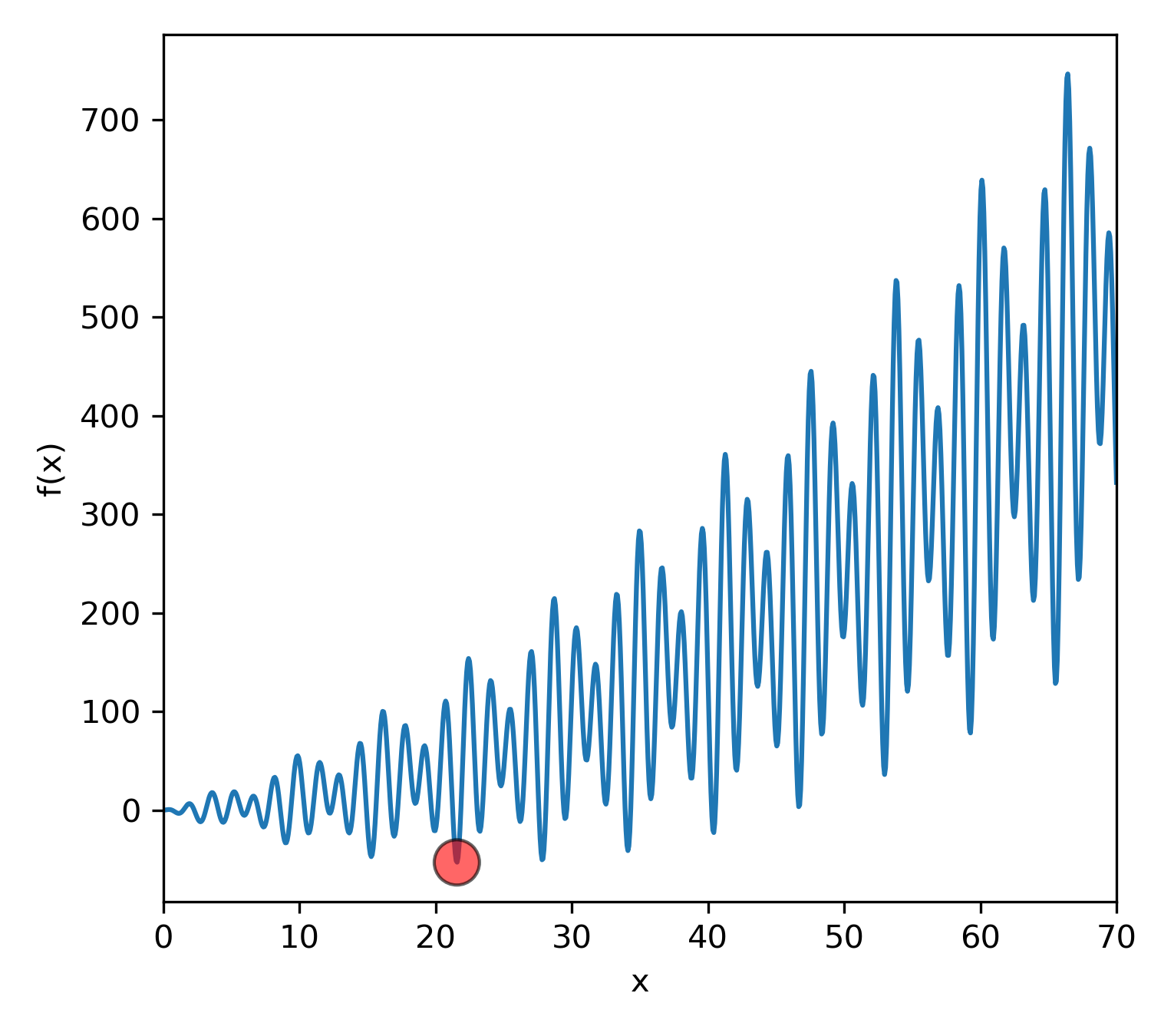}
	\end{center}
	\caption{Plot of objective function \eqref{eq:objective-02}}\label{fig:ex2_solution}
\end{figure}

We initialize the SBI-SIMEX scheme by deploying $20$ agents with initial positions $x_0 = {\rm random} (0, 5)$ and initial velocities $v_0 = {\rm random}(0, 40)$. The parameters used are $w_i = 10^{-4}$, $R=1$, $h = 0.5$.
Figure \ref{fig:ex2_res} shows the dynamics of the agents during the optimization process. Because of the inertia, some agents are allowed to wander over the landscape in a wider range so that they eventually converge to the global minimum. This demonstrates the advantage or even necessity of including inertia into the global optimization process in the swarming framework.
\begin{figure}[H]
	\begin{center}
		\includegraphics[width=0.25\textwidth]{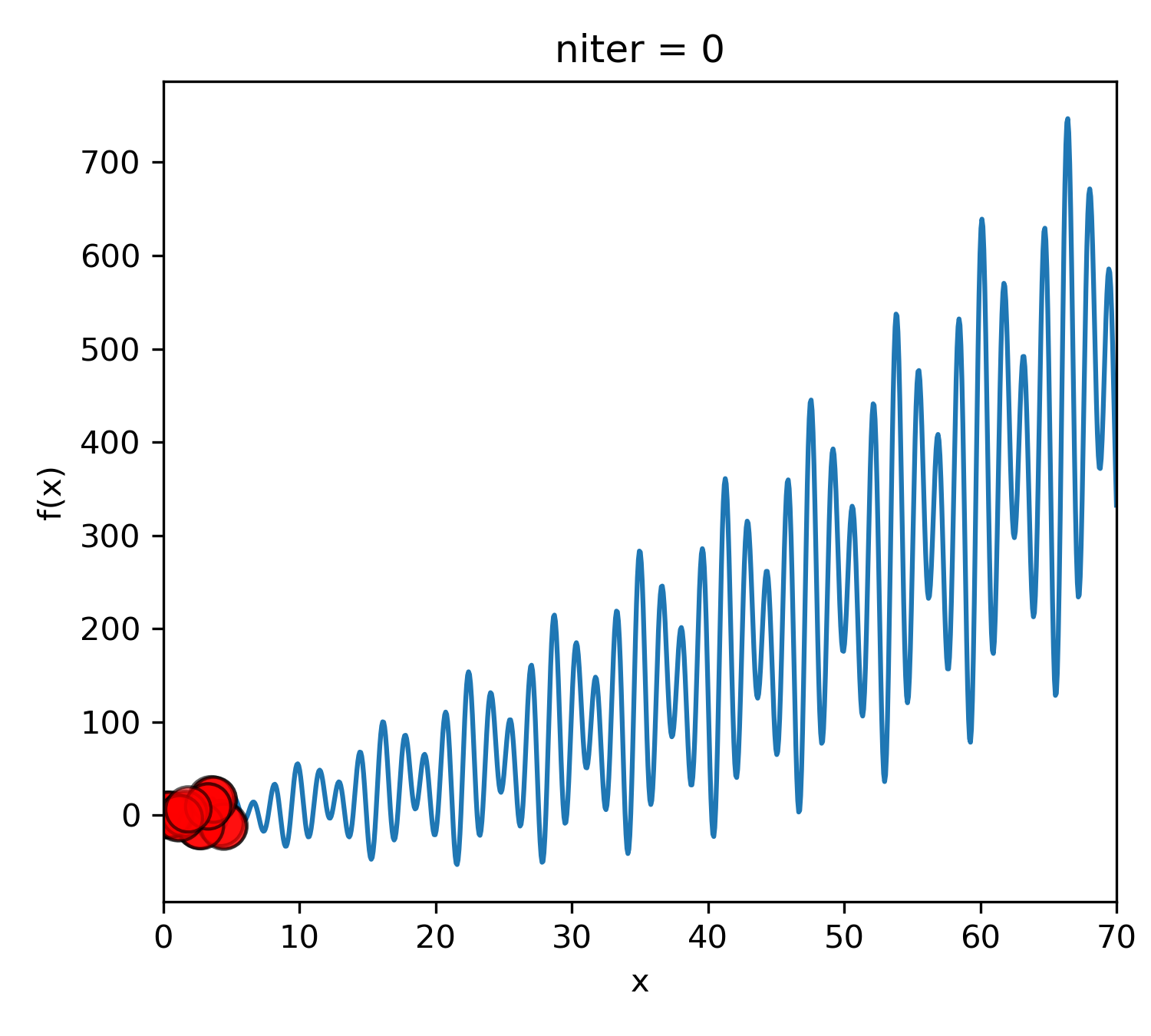}
		\includegraphics[width=0.25\textwidth]{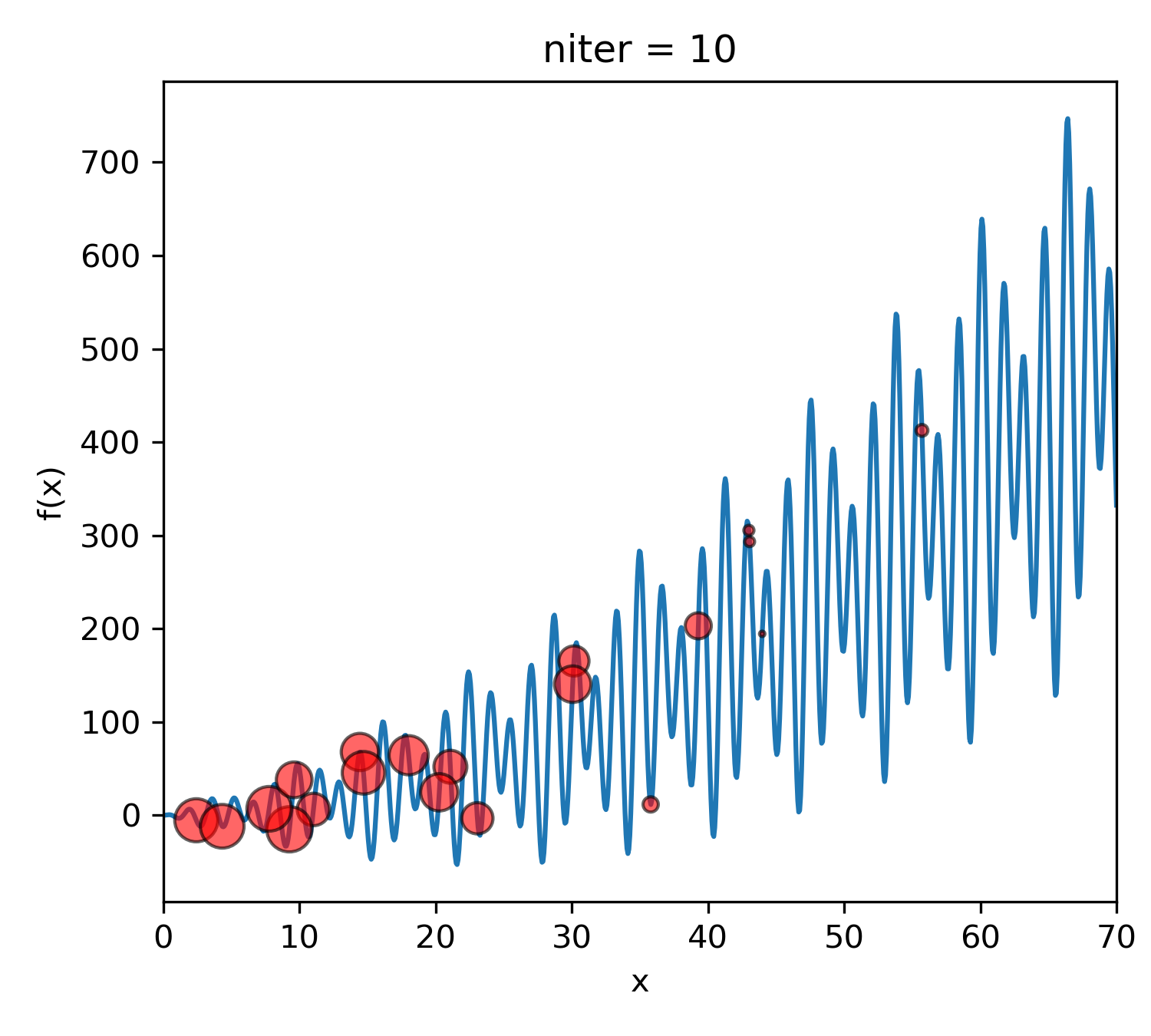}
		\includegraphics[width=0.25\textwidth]{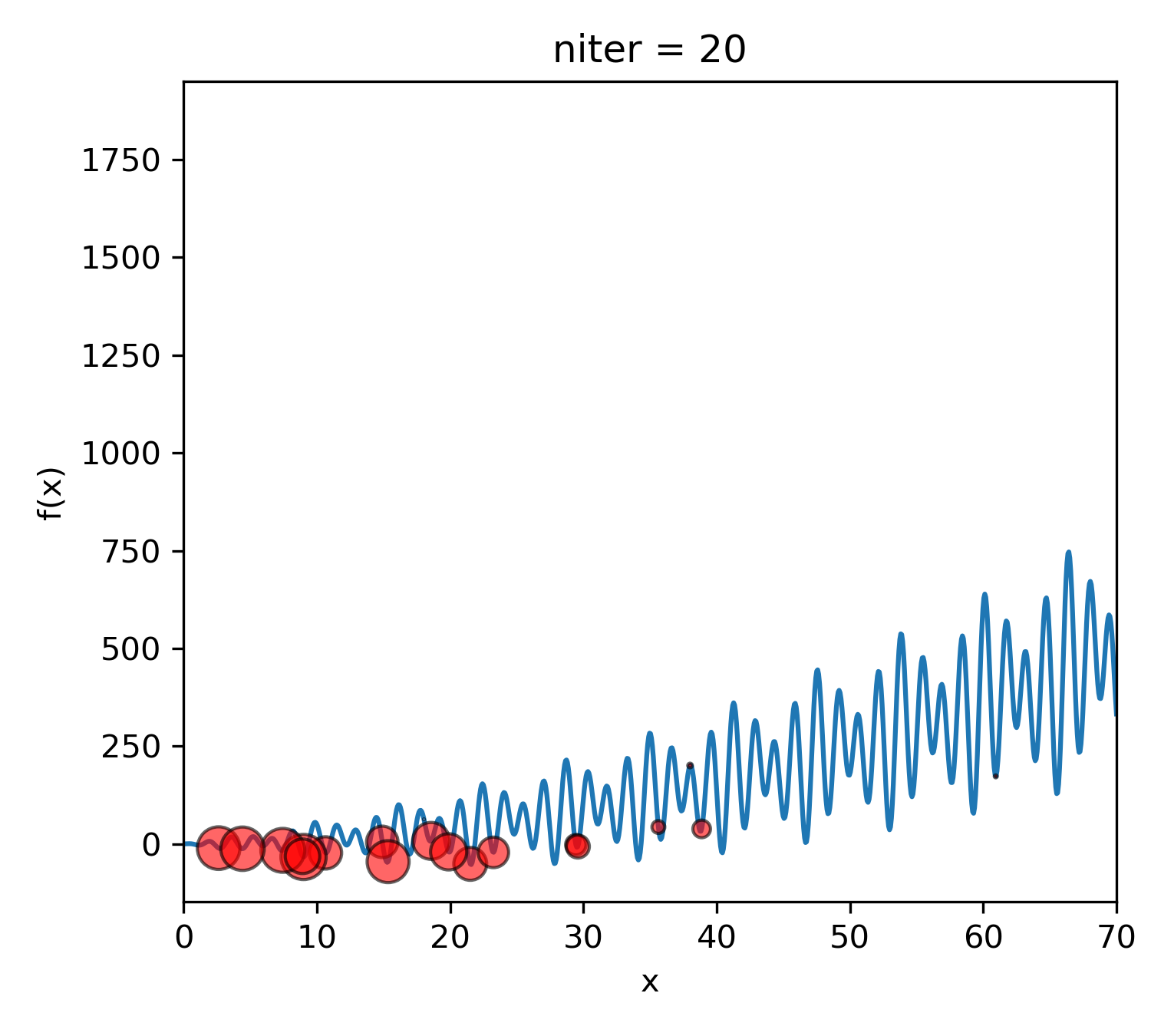}
		\includegraphics[width=0.25\textwidth]{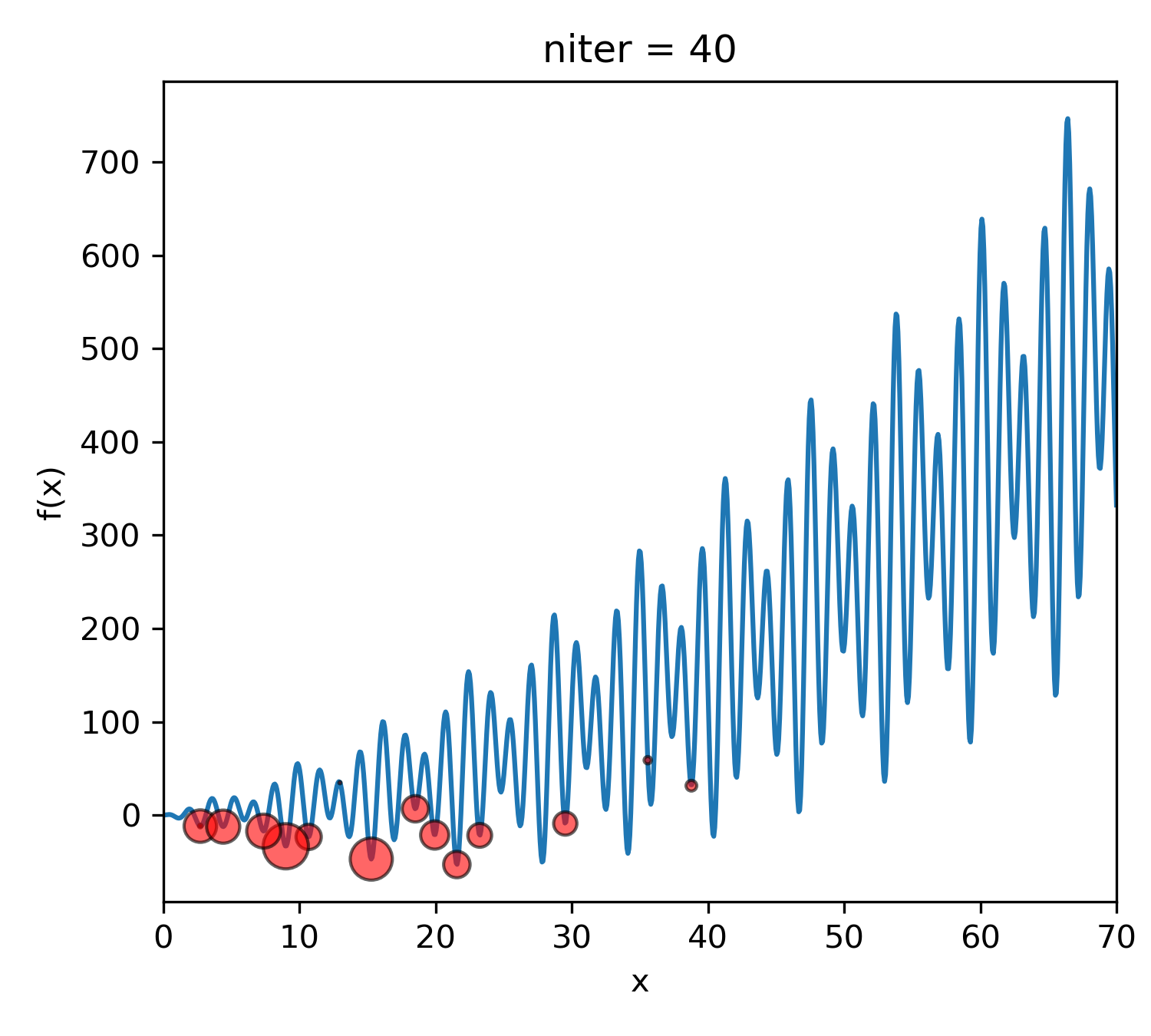}
		\includegraphics[width=0.25\textwidth]{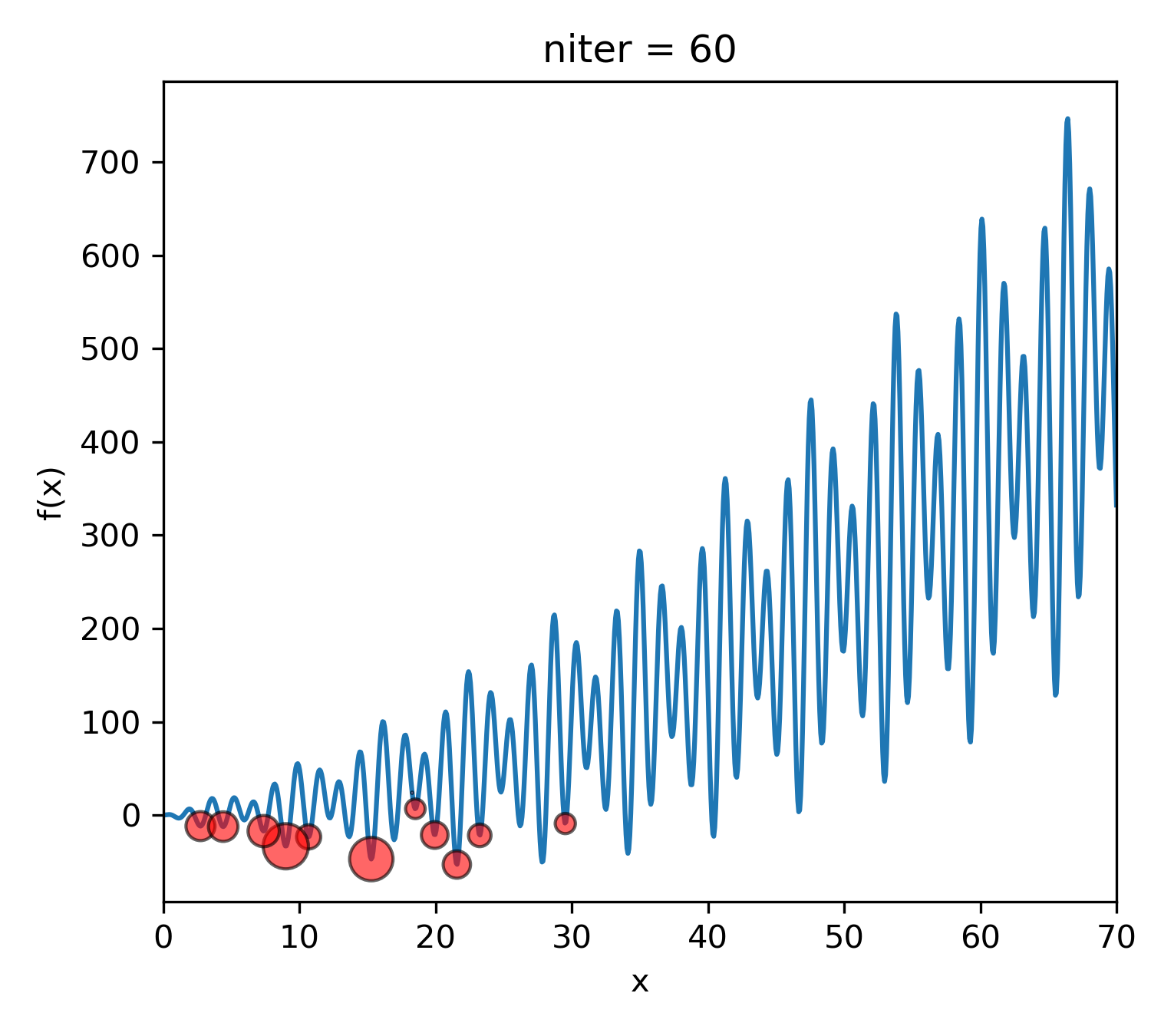}
		\includegraphics[width=0.25\textwidth]{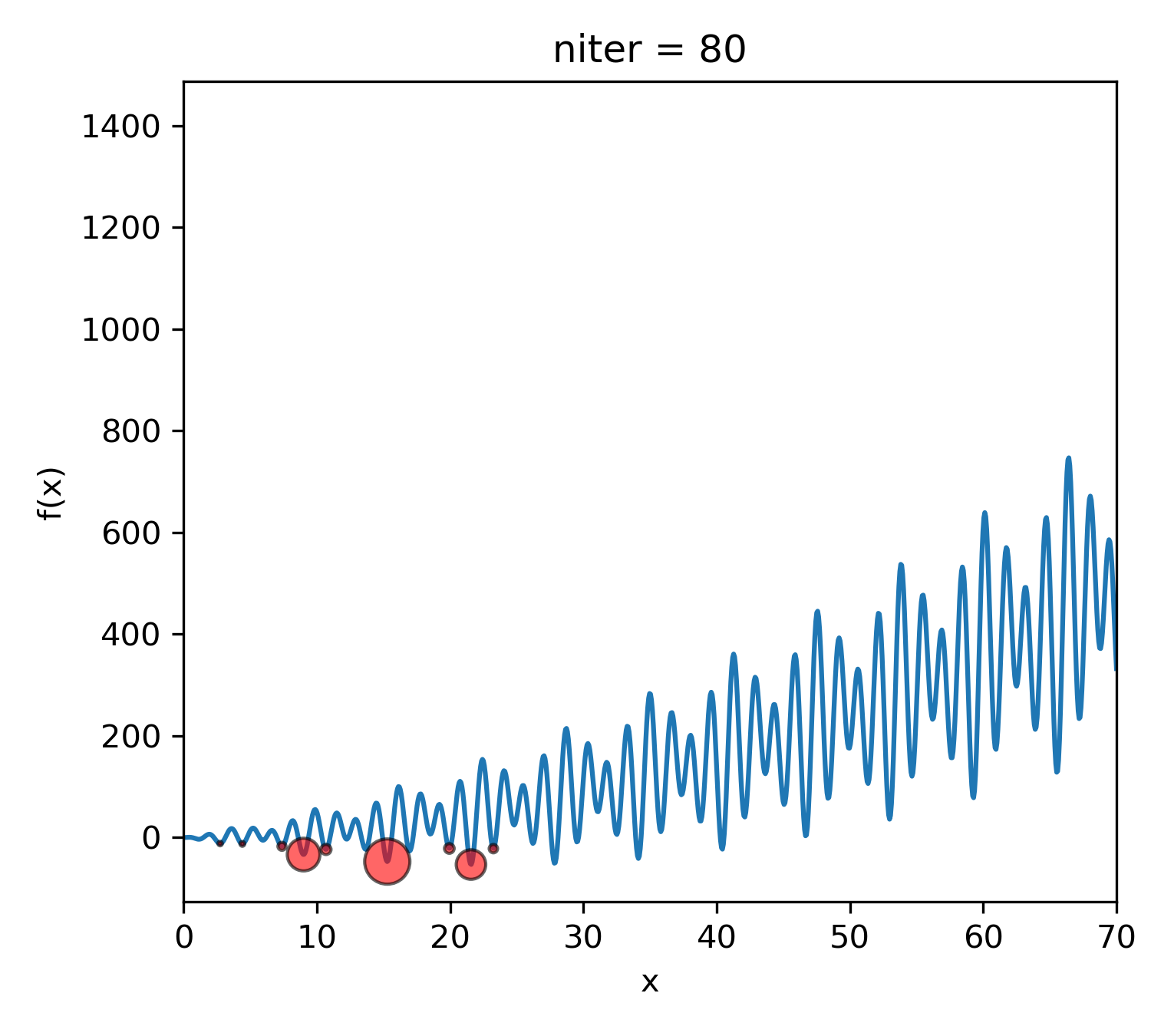}
		\includegraphics[width=0.25\textwidth]{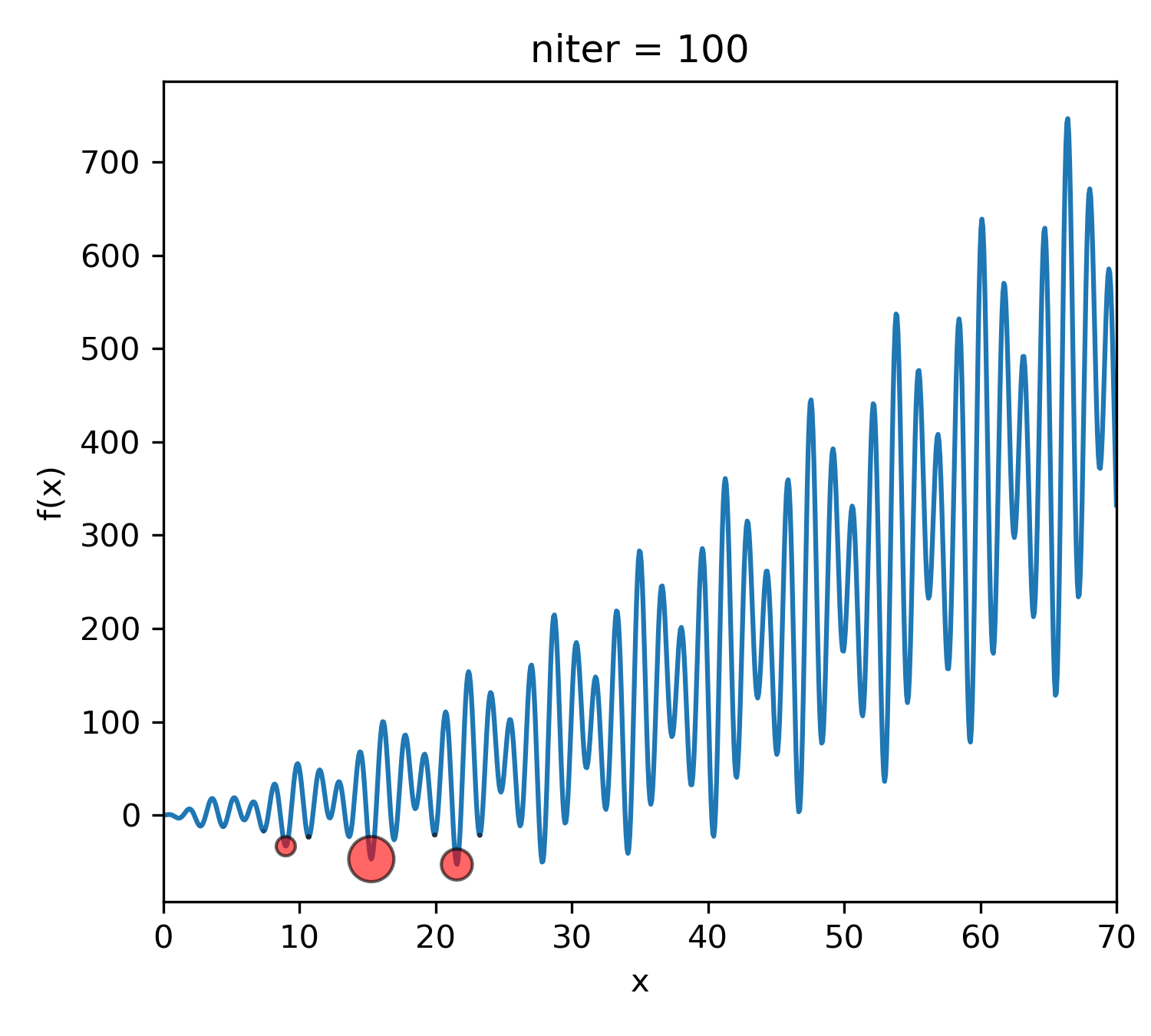}
		\includegraphics[width=0.25\textwidth]{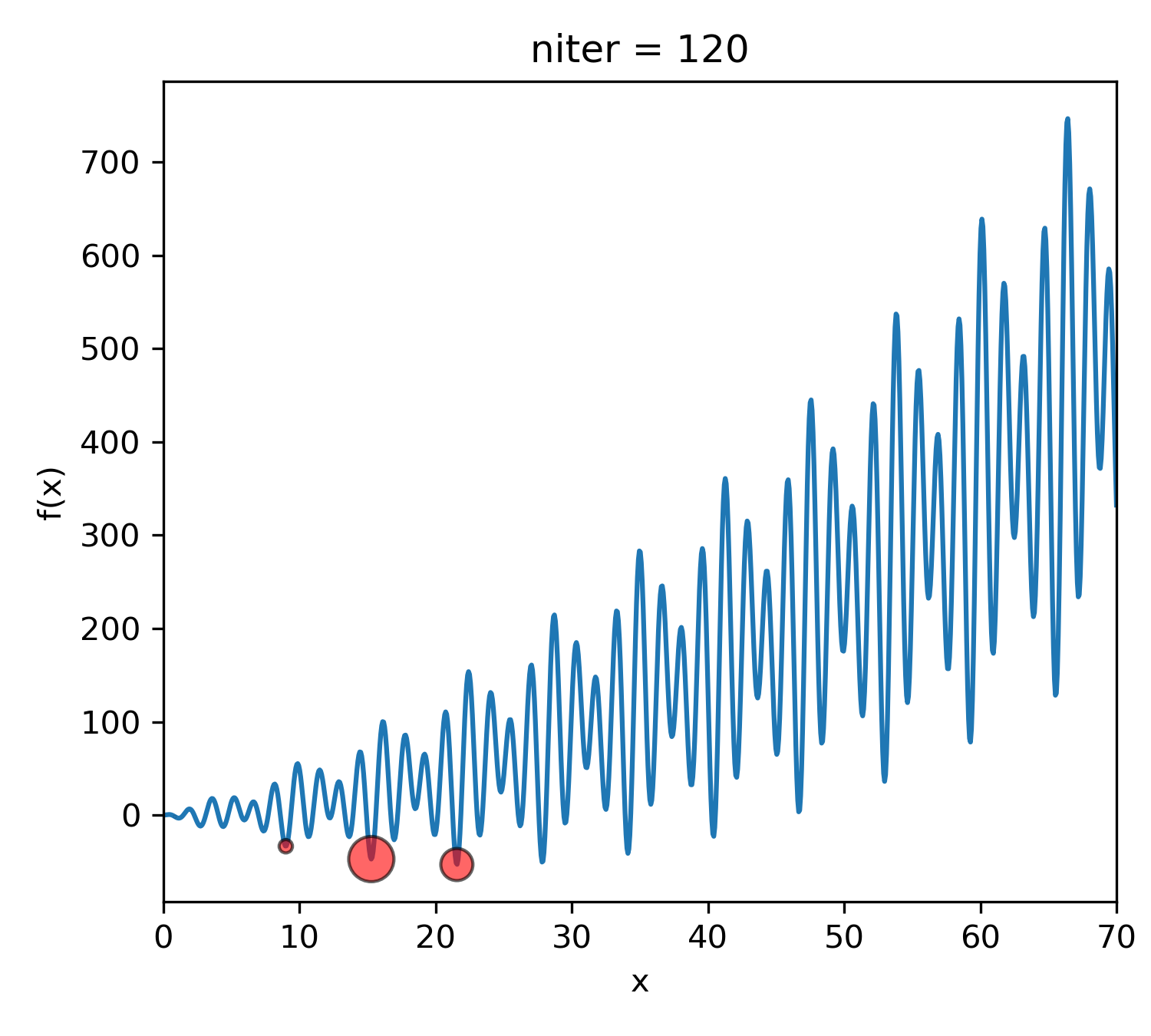}
		\includegraphics[width=0.25\textwidth]{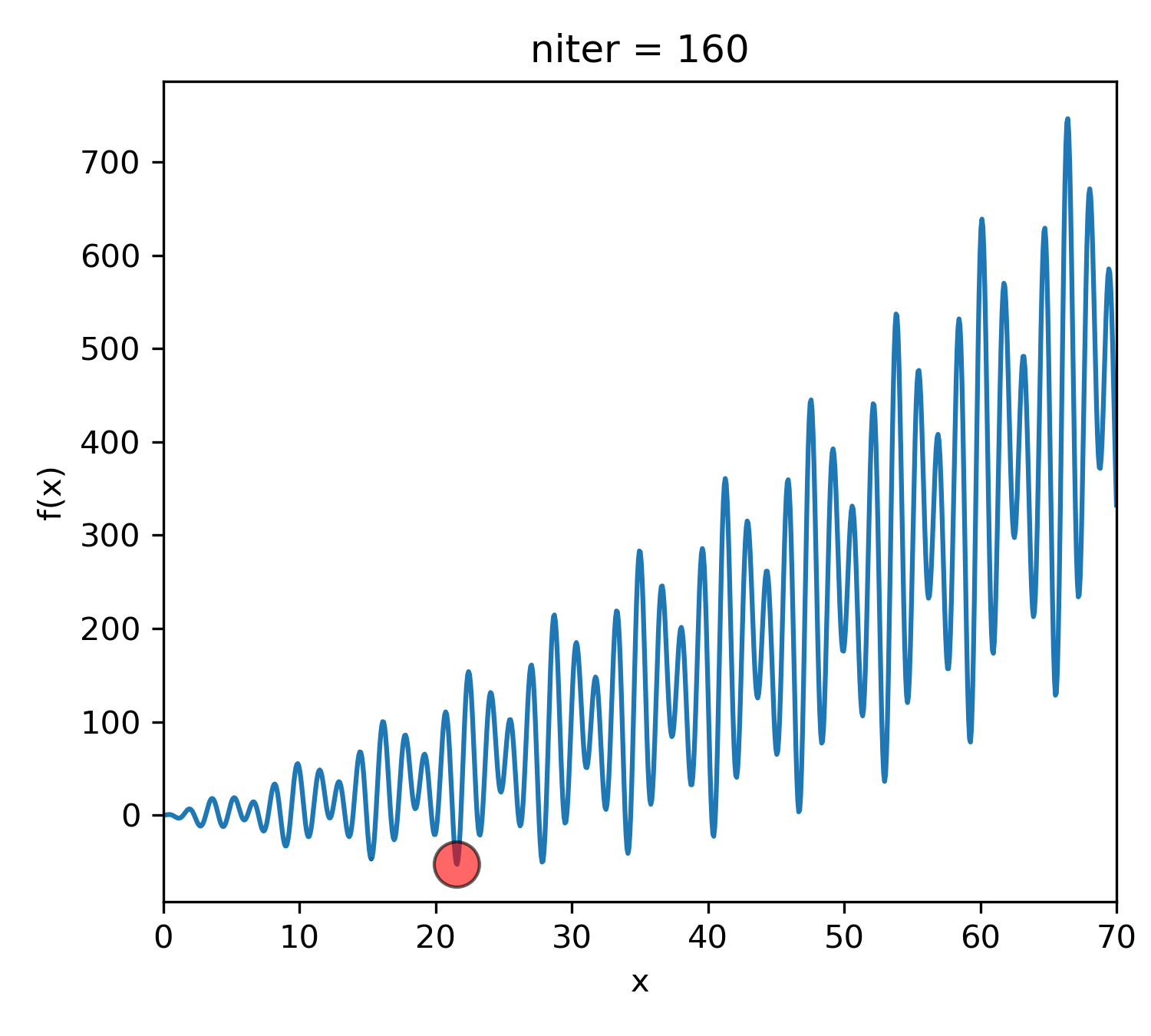}
	\end{center}
	\caption{Dynamics of the agents in the optimization process of function \eqref{eq:objective-02}.}\label{fig:ex2_res}
\end{figure}

\subsection{Optimization of high dimensional nonconvex functions}

We test the performance of the proposed algorithm on several multivariate functions in $d$-dimensions. We consider three benchmark cases using the Rastrigin, Rosenbrock, Styblinski-Tang objective functions in $d$-dimensions. These functions are defined as follows.

\begin{equation*}
	F_{\text{Rastrigin}}(\mathbf{x})
	= 10d + \sum\limits_{i=1}^{d}
	\left(x_i^2 - 10 \cos(2\pi x_i)\right),
\end{equation*}
\begin{equation*}
	F_{\text{Rosenbrock}}(\mathbf{x})
	= \sum_{i=1}^{d-1}
	\left(100\left(x_{i+1} - x_i^2\right)^2 + \left(1 - x_i\right)^2\right),
\end{equation*}
and
\begin{equation*}
	F_{\text{ST}}(\mathbf{x})
	= \frac{1}{2} \sum_{i=1}^{d}
	\left(x_i^4 - 16 x_i^2 + 5 x_i\right).
\end{equation*}

We note that the global minimum of the Rastrigin functions is $(0, \cdots, 0)^\top$, the global minimum of the Rosenbrock function is $(1, \cdots, 1)^\top$ and the global minimum of the Styblinski-Tang function is $(-2.903534, \cdots, -2.903534)^\top$. Figure \ref{fig:landscape2d} depicts the landscapes of these functions and their global minimum in case $d = 2$, respectively.

In our numerical experiments, we primarily compare the SBI-SIMEX scheme with mass conservation against the SBGD method, as the performance of the remaining approaches is essentially similar with that of SBI-SIMEX. Additionally, we evaluate a strategy that incorporates stochasticity. In our implementation, each agent first checks whether the function value at its updated position is smaller than that of the previous iteration; if it is, the update is accepted. Otherwise, a probabilistic acceptance criterion is applied based on the agent’s quality level. Specifically, high-quality agents are considerably less likely to accept inferior updates, while low-quality agents are more inclined to do so, thereby broadening the search region. The acceptance probability is determined by the function $P(m) = \tfrac{1}{2} - \tfrac{1}{2}{\rm tanh}(1000(m- \beta))$; a random number uniformly distributed in the interval \([0, 1]\) is generated and compared with the predetermined acceptance probability to decide whether the update should be adopted. The SBI-SIMEX method with this stochastic strategty iss hereby abbreviated as RSBI-SIMEX.

\begin{figure}[H]
	\begin{center}
		\includegraphics[width=0.3\textwidth]{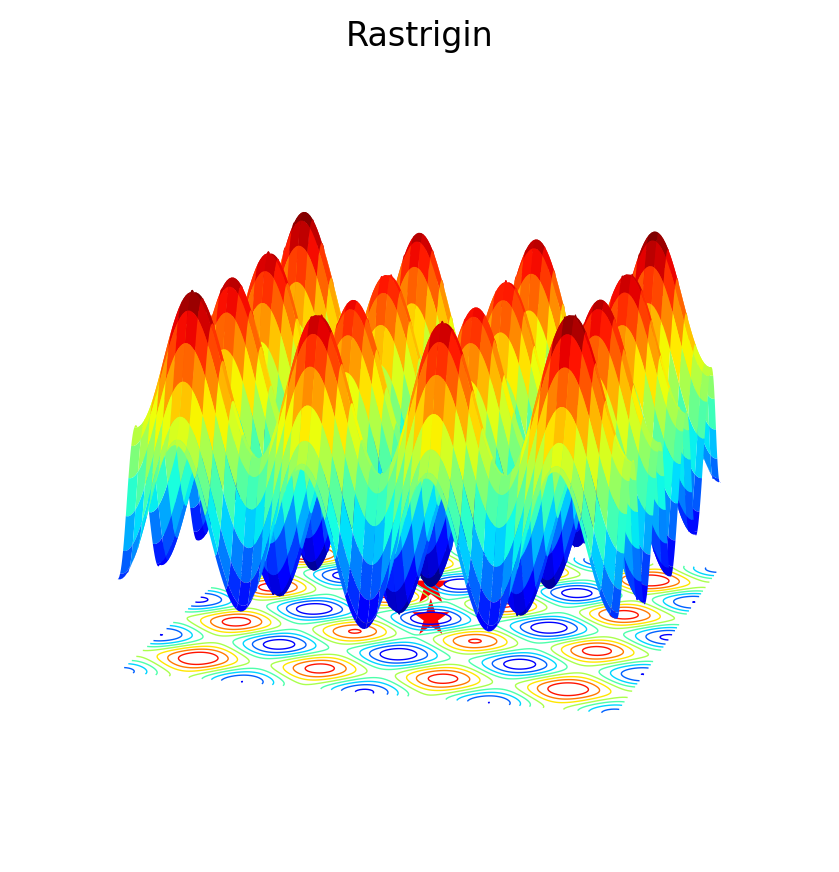}
		\includegraphics[width=0.3\textwidth]{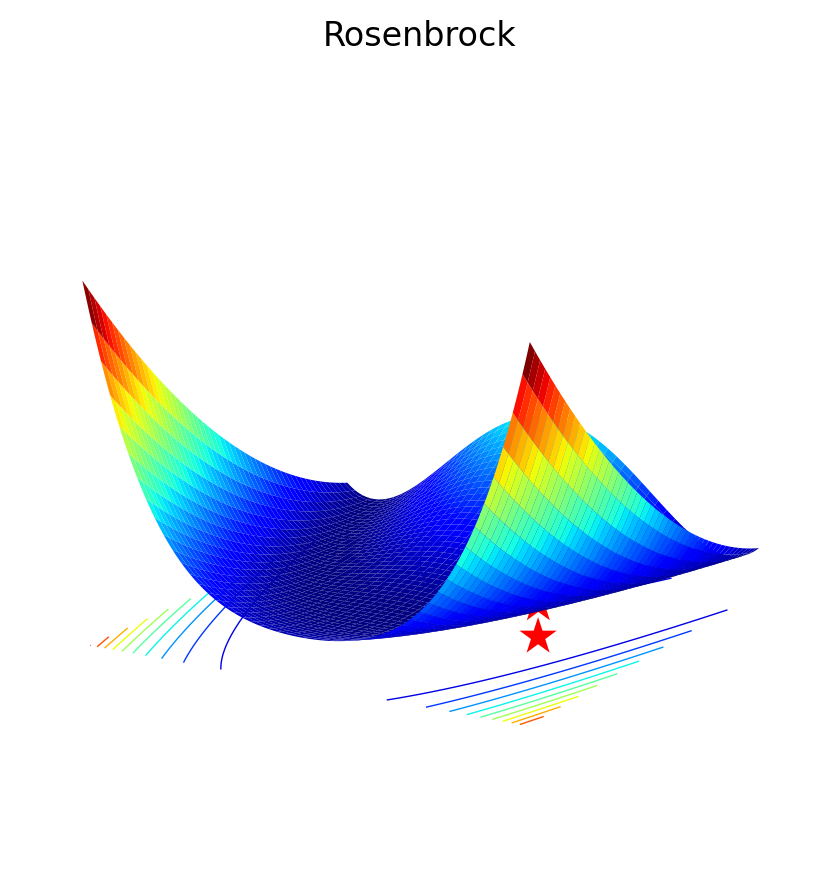}
		\includegraphics[width=0.3\textwidth]{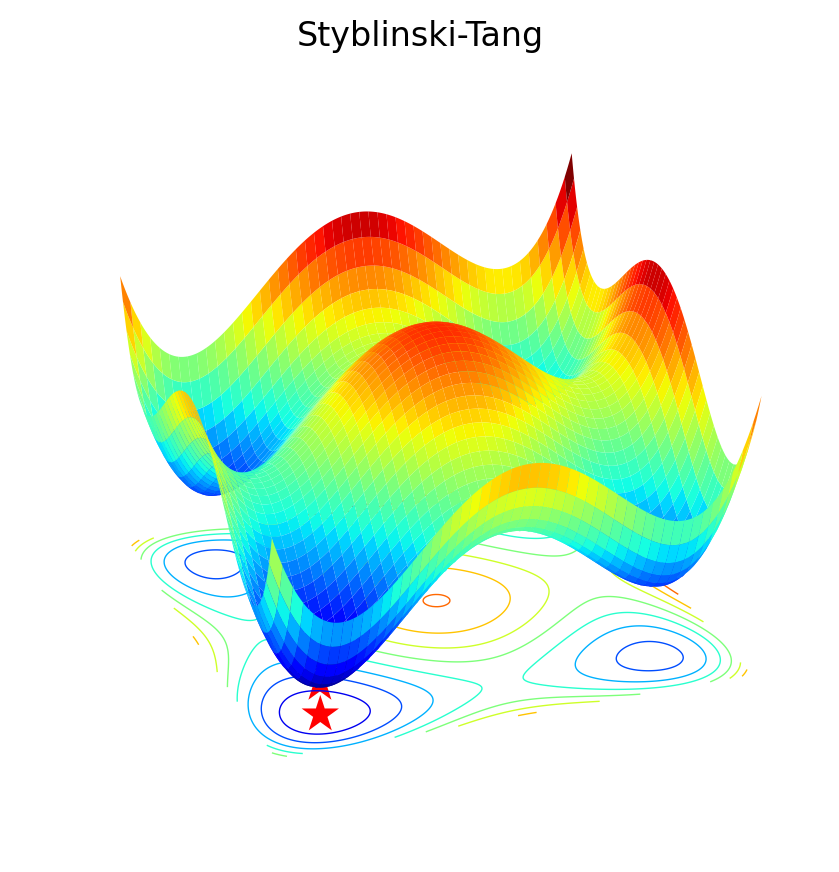}
	\end{center}
	\caption{Landscapes and the global minimum of the Rastrigin, Rosebrock and Styblinski-Tang functions in 2D, respectively.}\label{fig:landscape2d}
\end{figure}

Tables \ref{tab:ackley_comparison}--\ref{tab:st_comparison} summarize the comparison between the two methods. In all three cases, SBI-SIMEX delivers superior performance in searching for the global minimum. As the dimension increases, the relative success rate (success-rate-of-SBI-SIMEX vs success-rate-of-SBGD) improves significantly, especially in the optimization of the Rosenbrock function, attesting the superior power of the SBI approach. Moreover, in the case of the Rosenbrock function, the RSBI-SIMEX method demonstrates an enhanced success rate. In contrast, for the two remaining functions, the success rate achieved by the RSBI-SIMEX method is comparable to that of the SBI-IEMX method.
\begin{table}[H]
	\caption{Success rates of SBI-SIMEX and SBGD methods for global optimization of the Rastrigin function in various dimensions based on 1000 runs with uniformly generated initial position within $[-3, -1]^d$ and initial velocity within $[0, 4]^d$. The results of SBGD are from \cite{SwarmGDRandom}.}
	\label{tab:ackley_comparison}
	\begin{center}
		\resizebox{\textwidth}{!}{
			\begin{tabular}{c ccc ccc ccc ccc}
				\hline
				\multirow{2}{*}{$d$} & \multicolumn{3}{c}{N = 10} & \multicolumn{3}{c}{N = 25} & \multicolumn{3}{c}{N = 50} & \multicolumn{3}{c}{N = 100}                                                                                            \\
				\cmidrule(lr){2-4} \cmidrule(lr){5-7} \cmidrule(lr){8-10} \cmidrule(lr){11-13}
				                     & SBI-SIMEX                  & RSBI-SIMEX                 & SBGD                       & SBI-SIMEX                   & RSBI-SIMEX & SBGD   & SBI-SIMEX & RSBI-SIMEX & SBGD   & SBI-SIMEX & RSBI-SIMEX & SBGD    \\
				\hline
				2                    & 46.5\%                     & 41.5\%                     & 28.0\%                     & 81.8\%                      & 84.7\%     & 67.8\% & 95.9\%    & 97.0\%     & 95.3\% & 99.7\%    & 99.9\%     & 100.0\% \\
				3                    & 19.4\%                     & 13.6\%                     & 5.6\%                      & 36.2\%                      & 37.3\%     & 13.6\% & 58.0\%    & 62.8\%     & 28.6\% & 76.3\%    & 84.5\%     & 52.0\%  \\
				4                    & 4.1\%                      & 5.4\%                      & 1.0\%                      & 11.6\%                      & 10.7\%     & 3.9\%  & 19.6\%    & 24.5\%     & 5.7\%  & 31.0\%    & 38.6\%     & 11.4\%  \\
				5                    & 0.8\%                      & 1.2\%                      & 0.0\%                      & 4.0\%                       & 2.9\%      & 0.4\%  & 3.7\%     & 7.6\%      & 0.4\%  & 8.1\%     & 12.1\%     & 1.2\%   \\
				6                    & 0.2\%                      & 0.3\%                      & 0.0\%                      & 0.8\%                       & 0.9\%      & 0.0\%  & 1.6\%     & 1.9\%      & 0.1\%  & 2.3\%     & 6.7\%      & 0.4\%   \\
				\hline
			\end{tabular}
		}
	\end{center}
\end{table}

\begin{table}[H]
	\caption{Success rates of SBI-SIMEX and SBGD methods for global optimization of the Rosenbrock function of in various dimensions based on 1000 runs with uniformly generated initial position within $[-2.048, 2.048]^d$ and initial velocity within $[-1, 1]^d$. The results of SBGD are from \cite{SwarmGDRandom}.}
	\label{tab:rosen_comparison}
	\begin{center}
		\resizebox{\textwidth}{!}{
			\begin{tabular}{c ccc ccc ccc ccc}
				\hline
				\multirow{2}{*}{$d$} & \multicolumn{3}{c}{N = 10} & \multicolumn{3}{c}{N = 25} & \multicolumn{3}{c}{N = 50} & \multicolumn{3}{c}{N = 100}                                                                                           \\
				\cmidrule(lr){2-4} \cmidrule(lr){5-7} \cmidrule(lr){8-10} \cmidrule(lr){11-13}
				                     & SBI-SIMEX                  & RSBI-SIMEX                 & SBGD                       & SBI-SIMEX                   & RSBI-SIMEX & SBGD   & SBI-SIMEX & RSBI-SIMEX & SBGD   & SBI-SIMEX & RSBI-SIMEX & SBGD   \\
				\hline
				2                    & 99.9\%                     & 99.8\%                     & 10.3\%                     & 100.0\%                     & 100.0\%    & 18.7\% & 100.0\%   & 100.0\%    & 39.4\% & 100.0\%   & 100.0\%    & 56.7\% \\
				3                    & 99.5\%                     & 99.9\%                     & 2.2\%                      & 100.0\%                     & 99.3\%     & 9.6\%  & 100.0\%   & 100.0\%    & 33.9\% & 100.0\%   & 100.0\%    & 71.0\% \\
				4                    & 98.4\%                     & 93.8\%                     & 2.1\%                      & 99.5\%                      & 98.9\%     & 3.0\%  & 100.0\%   & 99.8\%     & 3.9\%  & 99.8\%    & 100.0\%    & 6.5\%  \\
				5                    & 96.4\%                     & 92.9\%                     & 0.8\%                      & 99.3\%                      & 98.6\%     & 1.6\%  & 99.1\%    & 100.0\%    & 3.2\%  & 99.6\%    & 100.0\%    & 6.1\%  \\
				6                    & 98.0\%                     & 93.1\%                     & 0.6\%                      & 99.0\%                      & 98.1\%     & 1.2\%  & 99.3\%    & 100.0\%    & 1.7\%  & 99.7\%    & 100.0\%    & 2.6\%  \\
				20                   & 92.0\%                     & 85.2\%                     & -                          & 88.5\%                      & 86.5\%     & -      & 92.2\%    & 82.9\%     & -      & 95.8\%    & 78.7\%     & -      \\
				\hline
			\end{tabular}
		}
	\end{center}
\end{table}

\begin{table}[H]
	\caption{Success rates of SBI-SIMEX and SBGD methods for global optimization of the Styblinski-Tang function in various dimensions based on 1000 runs with uniformly generated initial position within $[-3, 3]^d$ and initial velocity within $[-1, 1]^d$. The results of SBGD are from \cite{SwarmGDRandom}.}
	\label{tab:st_comparison}
	\begin{center}
		\resizebox{\textwidth}{!}{
			\begin{tabular}{c ccc ccc ccc ccc}
				\hline
				\multirow{2}{*}{$d$} & \multicolumn{3}{c}{N = 10} & \multicolumn{3}{c}{N = 25} & \multicolumn{3}{c}{N = 50} & \multicolumn{3}{c}{N = 100}                                                                                             \\
				\cmidrule(lr){2-4} \cmidrule(lr){5-7} \cmidrule(lr){8-10} \cmidrule(lr){11-13}
				                     & SBN-SIMEX                  & RSBI-SIMEX                 & SBGD                       & SBN-SIMEX                   & RSBI-SIMEX & SBGD   & SBN-SIMEX & RSBI-SIMEX & SBGD    & SBN-SIMEX & RSBI-SIMEX & SBGD    \\
				\hline
				2                    & 95.5\%                     & 96.2\%                     & 92.8\%                     & 99.9\%                      & 100.0\%    & 99.9\% & 100.0 \%  & 100.0\%    & 100.0\% & 100.0\%   & 100.0\%    & 100.0\% \\
				4                    & 56.8\%                     & 54.1\%                     & 35.3\%                     & 85.2\%                      & 86.9\%     & 79.0\% & 98.4\%    & 99.2\%     & 97.4\%  & 100.0\%   & 100.0\%    & 99.9\%  \\
				6                    & 18.5\%                     & 17.80\%                    & 10.4\%                     & 39.1\%                      & 42.3\%     & 32.5\% & 66.7\%    & 64.2\%     & 55.4\%  & 88.4\%    & 88.5\%     & 83.2\%  \\
				8                    & 5.8\%                      & 5.5\%                      & 2.5\%                      & 13.1\%                      & 11.5\%     & 9.7\%  & 23.0\%    & 24.7\%     & 18.7\%  & 47.5\%    & 45.6\%     & 35.4\%  \\
				10                   & 1.7\%                      & 0.9\%                      & 0.6\%                      & 3.0\%                       & 2.3\%      & 3.2\%  & 7.8\%     & 6.7\%      & 6.0\%   & 14.7\%    & 15.4\%     & 12.5\%  \\
				12                   & 0.6\%                      & 0.3\%                      & 0.2\%                      & 1.1\%                       & 1.3\%      & 0.8\%  & 1.6\%     & 1.8\%      & 2.2\%   & 4.0\%     & 2.9\%      & 3.8\%   \\
				\hline
			\end{tabular}
		}
	\end{center}
\end{table}

\section{Conclusion}

Based on nonequilibrium thermodynamics, we formulate the swam-based optimization problem as a minimization problem for the total mechanical energy of an initial system by coupling inertia of each agent with its potential energy given by the objective function in the optimization problem. The initial velocity of the agent and the energy dissipation rate for the mechanical energy of the agent serve as adjustable model parameters that can be adjusted to improve the search for the global optimum. The energy stable numerical approximation to the energy-dissipative system devised provides a new venue to devise efficient swarm-based algorithms.
The swarm-based inertial algorithms demonstrate strong  search capability for global optimization especially in the case when a relatively small number of agents are employed. This provides an efficient computational framework for global optimization problems in high dimension.

\section*{Acknowledgements}
Xuelong Gu's research is  supported  by
NSF award  OIA-2242812.  and Qi Wang's research is  partially supported  by
NSF awards  OIA-2242812 and DMS-2038080, DOE award DE-SC0025229, and an SC GEAR award.


\begin{thebibliography}{10}

	\bibitem{Attouch2000}
	H.~Attouch, X.~Goudou, and P.~Redont.
	\newblock {The heavy ball with friction method, I: The continuous dynamical system}.
	\newblock {\em Commun. Contemp. Math.}, 2(1):1--34, 2000.

	\bibitem{Borghi2023}
	G.~Borghi and L.~Pareschi.
	\newblock {Kinetic description and convergence analysis of genetic algorithms for global optimization}.
	\newblock {\em arXiv:2310.08562}, 2023.

	\bibitem{Carrillo2018}
	J.~A. Carrillo, Y.-P. Choi, C.~Totzeck, and O.~Tse.
	\newblock {An analytical framework for consensus-based global optimization method}.
	\newblock {\em Math. Models Methods Appl. Sci.}, 28(6):1037--1066, 2018.

	\bibitem{Chen2020}
	Y.~Chen, J.~Chen, J.~Dong, J.~Peng, and Z.~Wang.
	\newblock {Accelerating Nonconvex Learning via Replica Exchange Langevin Diffusion}.
	\newblock {\em arXiv:2007.01990}, 2020.

	\bibitem{Deb2012}
	K.~DEB.
	\newblock {\em {PTIMIZATION FOR ENGINEERING DESIGN: Algorithms and Examples}}.
	\newblock PHI Learning Private Limited, New Delhi, 2 edition, 2012.

	\bibitem{SwarmGDAnnel}
	Z.~Ding, M.~Guerra, Q.~Li, and E.~Tadmor.
	\newblock {Swarm-based Gradient Descent meets Simulated Annealing}.
	\newblock {\em SIAM J. Numer. Anal.}, 62(6):2745--2781, 2024.

	\bibitem{Dorigo1996}
	M.~Dorigo, V.~Maniezzo, and A.~Colorni.
	\newblock {Ant system: optimization by a colony of cooperating agents}.
	\newblock {\em IEEE Trans. Syst. Man Cybern. B}, 26(1):29--41, Feb 1996.

	\bibitem{Du2019}
	Q.~Du, L.~Ju, X.~Li, and Z.~Qiao.
	\newblock {Maximum principle preserving exponential time differencing schemes for the nonlocal Allen-Cahn equation}.
	\newblock {\em SIAM J. Numer. Anal.}, 57:875–898, 2019.

	\bibitem{Du2021}
	Q.~Du, L.~Ju, X.~Li, and Z.~Qiao.
	\newblock {Maximum bound principles for a class of semilinear parabolic equations and exponential time-differencing schemes}.
	\newblock {\em SIAM Rev.}, 63:317--359, 2021.

	\bibitem{Eyre1998}
	D.~J. Eyre.
	\newblock {Unconditionally gradient stable time marching the Cahn-Hilliard equation}.
	\newblock In {\em MRS Proceedings}, volume 529, page~39. Cambridge University Press, 1998.

	\bibitem{Gong-2021}
	Y.~Gong, Q.~Hong, and Q.~Wang.
	\newblock {Supplementary variable method for thermodynamically consistent partial differential equations}.
	\newblock {\em Comput. Methods Appl. Mech. Eng.}, 381, 2021.

	\bibitem{Gong-2018}
	Y.~Gong, J.~Zhao, and Q.~Wang.
	\newblock {Second order fully discrete energy stable methods on staggered grids for hydrodynamic phase field models of binary viscous fluids}.
	\newblock {\em SIAM J. Sci. Comput.}, 40(2):B528--B553, 2018.

	\bibitem{ieq3}
	Y.~Gong, J.~Zhao, and Q.~Wang.
	\newblock {Arbitrarily high-order linear energy stable schemes for gradient flow models}.
	\newblock {\em J. Comput. Phys.}, 419:109610, 2020.

	\bibitem{Grassi2023}
	S.~Grassi, H.~Huang, L.~Pareschi, and J.~Qiu.
	\newblock {Mean-field particle swarm optimization}.
	\newblock In B.~Perthame, W.~Bao, P.~A. Markowich, and E.~Tadmor, editors, {\em Modeling and Simulation for Collective Dynamics}, pages 127--193. World Scientific, 2023.

	\bibitem{Hong-2023}
	Q.~Hong, Q.~Wang, and Y.~Gong.
	\newblock {High-order supplementary variable methods for thermodynamically consistent partial differential equations}.
	\newblock {\em Comput. Methods Appl. Mech. Eng.}, 461, 2023.

	\bibitem{Lu2024SwarmGD}
	L.~Jingcheng, T.~Eitan, and Z.~Anil.
	\newblock {Swarm-Based Gradient Descent Method for Non-Convex Optimization}.
	\newblock {\em Commun. Amer. Math. Soc.}, 4:787--822, 2024.

	\bibitem{Kennedy1995}
	J.~Kennedy and R.~Eberhart.
	\newblock {Particle swarm optimization}.
	\newblock In {\em Proceedings of ICNN'95 - International Conference on Neural Networks}, volume~4, pages 1942--1948. IEEE, 1995.

	\bibitem{vanLaarhoven1987}
	P.~J.~M. Laarhoven and E.~H.~L. Aarts.
	\newblock {\em {Simulated Annealing: Theory and Applications}}.
	\newblock Springer Dordrecht, 1987.

	\bibitem{Nesterov1983}
	Y.~Nesterov.
	\newblock {A method for unconstrained convex minimization problem with the rate of convergence $O(1/k^2)$}.
	\newblock {\em Dokl. Akad. Nauk SSSR}, 269(3):543--547, 1983.

	\bibitem{NocedalWright2006}
	J.~Nocedal and S.~J. Wright.
	\newblock {\em {Numerical Optimization}}.
	\newblock Springer New York, NY, 2 edition, Jul 2006.

	\bibitem{Pinnau2017}
	R.~Pinnau, C.~Totzeck, O.~Tse, and S.~Martin.
	\newblock {A consensus-based model for global optimization and its mean-field limit}.
	\newblock {\em Math. Models Methods Appl. Sci.}, 27(1):183--204, 2017.

	\bibitem{Polyak1964}
	B.~T. Polyak.
	\newblock {Some methods of speeding up the convergence of iteration methods}.
	\newblock {\em USSR Comput. Math. $\&$ Math. Phys.}, 4(5):1--17, 1964.

	\bibitem{Shen2019}
	J.~Shen, J.~Xu, and J.~Yang.
	\newblock {A new class of efficient and robust energy stable schemes for gradient flows}.
	\newblock {\em SIAM Rev.}, 61(3):474--506, 2019.

	\bibitem{Sra2011}
	S.~Sra, S.~Nowozin, and S.~J. Wright, editors.
	\newblock {\em {Optimization for Machine Learning}}.
	\newblock The MIT Press, Sep 2011.

	\bibitem{Sutskever2013}
	I.~Sutskever, J.~Martens, G.~Dahl, and G.~Hinton.
	\newblock {On the importance of initialization and momentum in deep learning}.
	\newblock In {\em Proceedings of the 30th International Conference on Machine Learning}, volume~28, pages 1139--1147, 2013.

	\bibitem{SwarmGDRandom}
	E.~Tadmor and Zenginoğlu.
	\newblock {Swarm-Based Optimization with Random Descent}.
	\newblock {\em Acta Appl. Math.}, 190(2), 2024.

	\bibitem{Wales2004}
	D.~Wales.
	\newblock {\em {Energy Landscapes: Applications to Clusters, Biomolecules and Glasses}}.
	\newblock Cambridge University Press, Mar 2004.

	\bibitem{Wang-2020}
	Q.~Wang.
	\newblock {Generalized onsager principle and its applications}.
	\newblock In {Xiang you Liu}, editor, {\em Frontiers and Progress of Current Soft Matter Research}, chapter~3, pages 101--132. Springer Nature, Berlin, 2020.

	\bibitem{Wibisono2016}
	A.~Wibisono, A.~C. Wilson, and M.~I. Jordan.
	\newblock {A variational perspective on accelerated methods in optimization}.
	\newblock {\em Proc. Natl. Acad. Sci. U.S.A.}, 113(47):E7351--E7358, 2016.

	\bibitem{Yang2010}
	X.~Yang.
	\newblock {\em {Nature-Inspired Metaheuristic Algorithms}}.
	\newblock Luniver Press, 2nd edition, Jul 2010.

	\bibitem{ieq2}
	X.~Yang, J.~Zhao, Q.~Wang, and J.~Shen.
	\newblock {Numerical approximations for a three components Cahn–Hilliard phase-field model based on the invariant energy quadratization method}.
	\newblock {\em Math. Models Methods Appl. Sci.}, 27(11):1993--2030, 2017.

	\bibitem{Zhao&W2016}
	J.~Zhao and Q.~Wang.
	\newblock {Semi-discrete energy-stable schemes for a tensor-based hydrodynamic model of nematic liquid crystal flows}.
	\newblock {\em J. Sci. Comput.}, 68:1241--1266, 2016.

\end{thebibliography}

\end{document}